\documentclass[14pt]{article}
\usepackage[utf8]{inputenc}
\usepackage{pdfpages}
\usepackage{url}
\usepackage{euler} 
\usepackage[toc,page]{appendix}
\usepackage{amssymb, amsthm,amsmath}
\usepackage[Conny]{fncychap}
\usepackage{graphicx}
\usepackage{adjustbox}
\usepackage{comment}
\usepackage{etex}
\usepackage{mathrsfs}
\usepackage[english]{babel}

\usepackage[scaled=0.85]{beramono}
\usepackage{tikz-cd}
\usepackage{float}
\usepackage{fancyhdr}

\usepackage[left=0cm,right=0cm,top=2cm,bottom=2cm,margin=3cm]{geometry}
\usepackage{amsthm}
\usepackage{bigints}
\usepackage{tikz}
\usepackage[linktoc=all]{hyperref}
\usepackage[capitalise]{cleveref}
\hypersetup{colorlinks=true,linkcolor=blue,citecolor=red}
\usetikzlibrary{matrix,arrows,decorations.pathmorphing}
\usetikzlibrary{patterns,calc,angles,quotes}
\usepackage{tikz-cd}
\usepackage[utf8]{inputenc}
\tikzset{commutative diagrams/.cd}
\newtheorem{theorem}{Theorem}[subsection]

\newtheorem{corollary}[theorem]{Corollary}

\newtheorem{lemma}[theorem]{Lemma}

\newtheorem{proposition}[theorem]{Proposition}

\theoremstyle{definition}
\newtheorem{definition}[theorem]{Definition}

\newtheorem{question}[theorem]{Question} 

\newtheorem{claim}[theorem]{Claim}

\newtheorem{example}[theorem]{Example}

\newtheorem{remark}[theorem]{Remark}

\newtheorem{notation}[theorem]{Notation}

\newcommand{\CrrCopEal}{\op{Corr}(\Ca)^{\otimes}_{\E,\op{all}}}
\newcommand{\CrrCpEal}{\op{Corr}(\Ca)_{\E,\op{all}}}
\newcommand{\CrrCovopEal}{\op{Corr}^{\op{\E-cart}}(\op{Cov}(\Ca))^{\otimes}_{\tilde{\E},\op{all}}}
\newcommand{\CrrCovEopEal}{\op{Corr}^{\op{all-cart}}(\op{Cov}_{\E}(\Ca))^{\otimes}_{\tilde{\E},\op{all}}}
\newcommand{\Ccc}{(\Ca^{\op{op}})^{\coprod,\op{op}}}
\newcommand{\mc}{\mathcal}
\newcommand{\E}{\mathcal{E}}
\newcommand{\D}{\mathcal{D}}

\newcommand{\K}{\mathcal{K}}

\renewcommand{\P}{\mathcal{P}}
\newcommand{\N}{\mathcal{N}}

\newcommand{\op}[1]{\operatorname{#1}}
\renewcommand{\S}{\mathcal{S}}

\newcommand{\C}{\mathcal{C}}

\newcommand{\uhom}{\underline{\op{Hom}}}
\renewcommand{\O}{\mathcal{O}}

\newcommand{\ep}{\epsilon}
\newcommand{\dd}{\delta}
\newcommand{\I}{\mathscr{I}}

\newcommand{\Ca}{\mathcal{C}}

\newcommand{\X}{\mathcal{X}}

\newcommand{\bb}{\bullet}

\numberwithin{subsection}{section}

\newcommand{\J}{\mathcal{J}}

\newcommand{\sset}{\op{Set}_{\Delta}}

\newcommand{\Copt}{\op{\C pt}}

\newcommand{\bx}{\square}

\newcommand{\Kpt}{\op{\K pt}}

\newcommand{\Y}{\mathcal{Y}}
\pagestyle{fancy}

\fancypagestyle{main}{\fancyhf{}
	\fancyhead[LE,RO]{\scriptsize Enh op map for six fun formalism}
	\fancyhead[RE,LO]{\scriptsize \rightmark }
	\fancyfoot[CE,CO]{\thepage}}

\pagestyle{main}

\title{Six-Functor Formalisms III : The construction and extension of 6FFs}
\author{Chirantan Chowdhury}
\date{\today}

\begin{document}

\maketitle{}
\begin{abstract}
     This article is the last of the series of articles where we reprove the foundational ideas of abstract six-functor formalisms developed by Liu-Zheng. We prove the theorem of partial adjoints, which is a simplicial technique of encoding various functors altogether by taking adjoints along specific directions. Combined with the $\infty$-categorical compactification theorem from the previous article, we can construct abstract six-functor formalisms in reasonable geometric setups of our interest.  We also reprove the simplified versions of the DESCENT program due to Liu-Zheng, which allows us to extend such formalisms from smaller to larger geometric setups.
\end{abstract}
\tableofcontents
\section{Introduction}

The six-functor formalism was developed by Grothendieck and many others to understand duality in the context of \'etale cohomology of schemes, which resulted in solving Weil conjectures over finite fields. Briefly, a six-functor formalism  consists of six-functors $f^*,f_*,f_!,f^!, \uhom(-,-)$ and $\otimes$ assoicated to coefficient systems like \'etale cohomology, $\D$-modules etc. Although the classical formalism is formulated in the language of triangulated categories (\cite{Cisinski_2019}), the modern language of higher category theory developed by Lurie (\cite{HTT},\cite{HA} and \cite{SAG}) has upgraded such notions in this modern language known as abstract six-functor formalism.  \footnote{In these articles, we only consider the abstract six-functor formalism using the language of $\infty$-categories due to Lurie. Abstract six-functor formalisms have been studied using the language of derivators for example: \cite{hörmann2022derivator}}The $(\infty,1)$-categorical formalism of such notions relies on unpublished works of Liu-Zheng (\cite{Gluerestnerv} and \cite{liu2017enhanced}).\footnote{There is another formulation of abstract six-functor formalisms using the language of $(\infty,2)$-categories developed by Gaitsgory-Rozenblyum }. This article is the last of the series of articles where we reprove the foundational ideas of abstract six-functor formalisms developed by Liu-Zheng. We prove the theorem of partial adjoints, which is a simplicial technique of encoding various functors altogether by taking adjoints along specific directions. Combined with the $\infty$-categorical compactification theorem from the previous article, we can construct abstract six-functor formalisms in reasonable geometric setups of our interest. We also reprove the simplified versions of the DESCENT program due to Liu-Zheng, which allows us to extend such formalisms from smaller to larger geometric setups.\\

Before delving into the main results of this article, let us briefly restate what we did in the last two articles of this series. In \cite{chowdhury2023sixfunctorformalismsi}, we prove a technical theorem (\cref{maintechnicalsimplicesthm}) which helps us to solve various lifting problems involved in the realm of abstract six-functor formalisms. This uses the model structure of marked-simplicial sets and the category of simplices. The second article (\cite{chowdhury2024sixfunctorformalismsii}) relies on reproving the $\infty$-categorical version of Deligne's compactification, which is a higher analog of defining exceptional pushforward functors by gluing functors along open and proper morphisms (\cref{compthm}). This uses the technical theorem of the first article crucially. We also introduce the language of multi simplicial sets and other combinatorial simplicial sets related to compactifications and decomposing commutative squares into pullback squares. \\

Let $(\Ca,\E)$ be a marked $\infty$-category where $\Ca$ admits finite products $\E$ contains isomorphisms and is stable under pullbacks and compositions (such a pair is called a geometric setup), Liu-Zheng and Mann define the $(\infty,1)$-category of correspondences denoted by $\CrrCpEal$. It is an $\infty$-category where objects are objects of $\Ca$.
A $1$-simplex i.e an edge between $X_0$ and $X_1$ where $X_0,X_1 \in \Ca$ is a diagram of the form :
         \begin{equation*}
             \begin{tikzcd}
                 X_0 & X_{01}\arrow[l] \arrow[d,"f"] \\
                 {} & X_1
             \end{tikzcd}
         \end{equation*}
         where $f \in E$.
The $\infty$-category $\CrrCpEal$ comes equipped with two morphisms $\pi_{\op{all}}:\Ca^{op} \to  \CrrCpEal$ and $\pi_{\E}: \Ca_{\E} \to \CrrCpEal$ where $\Ca_{\E}$ is the full subcategory of $\Ca$ spanned by edges in $\E$.

\begin{definition}\cite[Definition A.5.6]{padic6functorlucasmann}
An abstract $3$-functor formalism is a lax symmetric monoidal functor :
\begin{equation}
    \D_{(\Ca,\E)} : \CrrCpEal \to \op{Cat}_{\infty}
 \end{equation}
    
\end{definition}     
Given an abstract $3$-functor formalism $\D_{(\Ca,\E)}$ precomposing with $\pi_{\op{all}}$ and $\pi_{\E}$ gives us the functors : 
\begin{equation}
    \D^* : \Ca^{\op{op}} \to \op{Cat}_{\infty} \quad;\quad \D_! : \Ca_{\E} \to \op{Cat}_{\infty}
 \end{equation}
 These are the pullback and exceptional pushforward functors. Combined with symmetric monoidiality, which gives the tensor structure on $\D(X):= \D_{(\Ca,\E)}(X)$, we have three functors, hence the name $3$-functor formalism. For any $f: Y \to X$ in $\Ca$ ($\E$), let $f^*:=\D^*(f)$ ($f_!=\D_!(f)$).

 \begin{definition}\cite[Definition A.5.7]{padic6functorlucasmann}
An abstract  $6$-functor formalism is a an abstract $3$-functor formalism such that $f^*,f_!$ and $\otimes$ have adjoints $f_*,f^!,\uhom(-,-)$ repsectively.
 \end{definition}

 The natural question arises given $(\Ca,\E)$ under what assumptions one can construct a $3$-functor formalism. Motivated by Deligne's gluing techniques, we have the following theorem.

 \begin{theorem}[Simplified version of \cref{mainconstructiontheorem}]\label{mainconstructiontheorembrief}
     Let $(\Ca,\E)$ be pair as above. Let $(\I,\P)$ be two subsets of edges with the following assumptions : \begin{enumerate}
         \item Both $\I$ and $\P$ are stable under pullbacks and compositions and contain isomorphisms.
         \item Every morphism $f \in \E$ admits a decomposition $f =\bar{f} \circ j$ where $j \in \I$ and $\bar{f} \in \P$.
         \item  Given $f : X \to Y$ in $\Ca$ and $g: Y \to Z$ in $\I$($\P$) then $f \in \I$($\P$) iff $g \circ f \in \I$($\P$).
         \item Every morphism $f \in \I \cap \P$ is $k$-truncated for some $k \ge -2$. 
         \end{enumerate}
         Let 
         \begin{equation}
             \D^{*\otimes} : \Ca^{\op{op}} \to \op{CAlg}(\op{Cat}_{\infty})
         \end{equation}
         with the following assumptions :
\begin{enumerate}
     \item For every $X \in \Ca$, $\D(X)$ is closed.
     \item For every $f :X \to Y$ $f^*$ admits a right adjoint $f_*$.
     \item For every $f :X \to Y$ in $\I$, $f^*$ admits a left adjoint $f_{\#}$ which satisfies $\I$-projection formula and $\I$-base change.
    \item For every $f: X \to Y$ in $\P$, $f_*$ satisfies $\P$-projection formula and $\P$-base change. Also, $f_*$ admits a right adjoint $f^!$.    
    \item For every $f: X \to Y$ in $\I$, $f_{\#}$ satisfies base change with respect to $(-)_*$.
    \end{enumerate}
    Then $\D^{*\otimes}$ can be extended to an abstract $6$-functor formalism:
    \begin{equation}
        \D_{(\Ca,\E)} : \CrrCpEal \to \op{Cat}_{\infty}
     \end{equation}
     such that $f \in I$, we have $f_!=f_{\#}$ and for $f \in \P$, we have $f_! =f_*$.      
  \end{theorem}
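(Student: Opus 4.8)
\emph{Proof proposal.} The plan is to build the exceptional pushforward $\D_!$ on $\Ca_\E$ out of the two partial adjoints of $\D^*$, to glue them by compactification, and finally to repackage the contravariant datum $\D^*$ together with the covariant datum $\D_!$ into a single lax symmetric monoidal functor on $\CrrCpEal$ by means of the theorem of partial adjoints.

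First I would produce the two ``halves'' of $f_!$. Restricting $\D^{*\otimes}$ along $\Ca_\I^{\op{op}} \hookrightarrow \Ca^{\op{op}}$ and taking the left adjoints $f_{\#}$ supplied by assumption~(3) on $\D^{*\otimes}$, the theorem of partial adjoints upgrades $\D^*|_{\Ca_\I}$ to a covariant functor $\D_{\#} : \Ca_\I \to \op{Cat}_{\infty}$; the existence of $f_{\#}$ along $\I$ together with $\I$-base change is exactly the input the partial-adjoint machine needs to make this homotopy coherent. Dually, since every $f^*$ admits a right adjoint $f_*$ by assumption~(2) on $\D^{*\otimes}$, taking right adjoints yields a covariant functor $\D_* : \Ca \to \op{Cat}_{\infty}$, whose restriction to $\Ca_\P$ will serve as the proper half. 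On the overlap $\I \cap \P$ the two prescriptions must be identified coherently: there $f_{\#}$ and $f_*$ are both adjoints of the same $f^*$, and assumption~(4) on $(\I,\P)$, that such $f$ are $k$-truncated, is precisely what controls the higher coherences of this identification.

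Second, I would glue $\D_{\#}|_{\I}$ and $\D_*|_{\P}$ into one functor $\D_! : \Ca_\E \to \op{Cat}_{\infty}$ by feeding them to the $\infty$-categorical compactification theorem \cref{compthm}. The geometric hypotheses~(2)--(4) on $(\I,\P)$ --- the factorization $f = \bar f \circ j$ with $j \in \I$ and $\bar f \in \P$, the cancellation properties, and the truncatedness of $\I \cap \P$ --- are exactly the conditions under which compactification produces a well-defined and functorial $f_! \simeq \bar f_* \circ j_{\#}$, independent of the chosen factorization; the remaining compatibility needed to glue the two directions, namely that $f_{\#}$ satisfies base change against $(-)_*$, is assumption~(5) on $\D^{*\otimes}$. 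With $\D^*$ and $\D_!$ in hand, I would assemble them into the correspondence functor: the base-change isomorphisms $f^* g_! \simeq g'_! f'^*$ over a Cartesian square follow from $\I$-base change when $g \in \I$ and $\P$-base change when $g \in \P$, patched along the factorization, while the lax symmetric monoidal structure encoding the projection formula $f_!(A \otimes f^* B) \simeq f_! A \otimes B$ comes from the $\I$- and $\P$-projection formulas of assumptions~(3)--(4), the internal-hom $\uhom(-,-)$ being furnished by closedness~(1). The theorem of partial adjoints then encodes the contravariant $\D^*$ and covariant $\D_!$ with these coherences as one lax symmetric monoidal $\D_{(\Ca,\E)} : \CrrCpEal \to \op{Cat}_{\infty}$ restricting to $f_! = f_{\#}$ on $\I$ and $f_! = f_*$ on $\P$.

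Finally I would check that this $3$-functor formalism is a $6$-functor formalism by exhibiting the three right adjoints: $\uhom(-,-)$ by closedness~(1), $f_*$ by~(2), and $f^!$ obtained from $f = \bar f \circ j$ by setting $f^! := j^* \circ \bar f^!$, where $\bar f^!$ exists by~(4) and $j^*$ is right adjoint to $j_{\#} = j_!$. The main obstacle I anticipate is not any single formula but the homotopy coherence of the whole package: the gluing of the $\I$- and $\P$-directions and the final passage to correspondences must be carried out compatibly up to all higher homotopies, so that the real work lies in verifying that the hypotheses of the partial-adjoint and compactification theorems are genuinely met --- in particular that the truncatedness assumption~(4) on $(\I,\P)$ tames the coherences over $\I \cap \P$.
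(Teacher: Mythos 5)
Your proposal has the right ingredients (partial adjoints, $\infty$-categorical compactification, the passage to correspondences), but it assembles them in an order that leaves the decisive step unjustified. You propose to first manufacture two covariant functors $\D_{\#}$ on $\Ca_{\I}$ and $\D_{*}$ on $\Ca_{\P}$, glue them by compactification into $\D_{!}$ on $\Ca_{\E}$, and only \emph{then} ``encode the contravariant $\D^{*}$ and covariant $\D_{!}$ as one functor on $\CrrCpEal$ by the theorem of partial adjoints.'' That last step is the gap: \cref{prtadj} takes a \emph{single} functor already defined on a multisimplicial diagonal $\dd^*_I R$ and replaces specified directions by their adjoints; it is not a device for merging two independently constructed functors $\D^{*}$ and $\D_{!}$ into a functor on correspondences. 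A functor out of $\op{Corr}(\Ca)_{\E,\op{all}}$ carries coherent base-change data for every cartesian square with one leg in $\E$, and once you have collapsed to the pair $(\D^{*},\D_{!})$ you have discarded exactly the higher coherences needed to reconstruct it. The paper avoids this by never separating the directions: it starts from $\D_1$ on the three-directional grid $\dd^*_{3,\{1,2,3\}}(\Ccc)^{\op{cart}}_{\P,\I,\op{all}}$ (pullbacks in all three directions), applies partial adjoints first in the $\P$-direction and then in the $\I$-direction \emph{while the ``all'' direction is still present} --- this is where $\P$- and $\I$-base change, the projection formulas, and the support property enter as the adjointability hypotheses of \cref{prtadj} --- and only afterwards uses the compactification theorem \cref{compthmalledge} (the three-directional version, not \cref{compthm}) to collapse the $\I$- and $\P$-directions into a single $\E$-direction over $\dd^*_{2,\{2\}}(\Ccc)^{\op{cart}}_{\E,\op{all}}$, finally identifying that gadget with $\CrrCpEal$ via \cref{correspondencebisimplicialsetsequivalence.}.

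Two smaller divergences. First, the truncatedness of $\I\cap\P$ is not what ``controls the coherences identifying $f_{\#}$ with $f_{*}$ on the overlap''; it is a hypothesis of the compactification theorem (weak contractibility of the space of compactifications), and the identification on $\I\cap\P$ is absorbed by that machinery rather than performed as a separate step. Second, because the whole construction is run on $(\Ca^{\op{op}})^{\coprod,\op{op}}$ to produce the operadic/lax monoidal structure, one must verify the adjointability hypotheses for squares of \emph{tuples}; this is the content of \cref{projectionformulabasechangeadjunction}, which converts the projection formula into right-adjointability of the relevant K\"unneth squares, and your proposal does not account for it. Your final paragraph on the three extra adjoints ($\uhom$, $f_{*}$, $f^{!}$) is fine and matches the paper.
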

The above theorem uses partial adjoints, $\infty$-categorical compactification, and relating the $\infty$-category of correspondences with multisimplicial sets. A similar result shall be proved in a model-independent way using the language of $(\infty,2)$-categories in a joint work in progress by Mann, Heyer, and Perutka (see \cref{alternateconstructionremarkMann} for more details).

The main advantage of such an abstract six-functor formalism is to extend them from smaller geometric setups like schemes, diamonds, etc, to larger geometric setups like algebraic stacks, v-stacks, etc. Applications of such extensions are used in the context of arithmetic geometry (\cite{padic6functorlucasmann}) and motivic homotopy theory (\cite{khan2021generalized}, \cite{Chowdhury}). To state the theorems, we introduce generalized notions of pairs of geometric setups mimicking the settings of schemes embedded in algebraic stacks.

\begin{definition}
\begin{enumerate}
    \item An inclusion of two marked $\infty$-categories $(\Ca,\S,\E) \subset (\Ca',\S',\E')$ is a \textit{nice geometric pair} if the following conditions hold :
    \begin{enumerate}
        \item Each of the four pairs $(\Ca,\S),(\Ca,\E),(\Ca,\S')$ and $(\Ca',\E')$ are geometric setups.
        \item $\S' \cap \Ca_1 =\S$.
        \item For  $X' \in \Ca'$, there exists a morphism $x : X \to X'$ called an \textit{atlas} such that $X \in \Ca$ and for every $Y \to X'$ where $Y \in \Ca$, the base change $Y \times_{X'} X \to Y$ lies in $S$.
        \item For every $f : X' \to Y'$ in $\E'$ and for every atlas $y : Y \to Y'$, the base change morphism $X' \times_{Y'}Y \to Y$ is in $\E.$
    \end{enumerate}
    \item An inclusion of $2$-marked $\infty$-categories $(\Ca,\S,\E) \subset (\Ca,\S,\E')$ is an \textit{exceptional pair} if the following conditions are satisfied :
    \begin{enumerate}
        \item The pairs $(\C,\S),(\Ca,\E),(\Ca,\E')$ are geometric setups.
        \item $\S \subset \E$.
        \item For every $f: X \to Y$ in $\E'$, there exists a morphism of augmented simplicial objects 
    \begin{equation*}
        f_{\bb} : X_{\bb} \to Y_{\bb}
        \end{equation*}
        where 
        \begin{itemize}
            \item $f_{-1}=f$
            \item $f_n \in E$ for $n \ge 0$,
            \item $X_{\bb} \to X$ and $Y_{\bb} \to Y$ are $\S$-hypercovers.
        \end{itemize}
        
    \end{enumerate}
    \end{enumerate}
\end{definition}

Nice geometric pairs allow us to extend our abstract six-functor formalisms to bigger setups, while exceptional pairs allow us to extend the exceptional functors to a larger class of morphisms. The theorem of extending abstract six-functor formalisms can be stated as follows:
\begin{theorem}\label{extendinssixfunctorCEside}[Combination of \cref{extendingsixfunctorCside} and \cref{extendingsixfunctorEside}]
    \begin{enumerate}
        \item Let $(\Ca,\S,\E) \subset (\Ca,\S',\E')$ be a nice geometric pair and let 
        \begin{equation}
            \D_{(\Ca,\E)} : \CrrCpEal \to \op{Cat}_{\infty}
        \end{equation}
        be an abstract six-functor formalism with the property that $\D^*$ satisfies descent along $\S$-\v{C}ech covers. Then $\D_{(\Ca,\E)}$ can be extended to an abstract six-functor formalism:
        \begin{equation}
            \D_{(\Ca',\E')} : \op{Corr}(\Ca')_{\E',\op{all}} \to \op{Cat}_{\infty}
        \end{equation}
        \item Let $(\Ca,\S,\E) \subset (\Ca,\S,\E')$ be an exceptional pair and let 
        \begin{equation}
            \D_{(\Ca,\E)} : \CrrCpEal \to \op{Cat}_{\infty}
        \end{equation}
        be an abstract six-functor formalism with the property that $\D_!$ satisfies codescent along $\S$-hypercovers. Then $\D_{(\Ca,\E)}$ can be extended to an abstract six-functor formalism:
        \begin{equation}
            \D_{(\Ca,\E')} : \op{Corr}(\Ca)_{\E',\op{all}} \to \op{Cat}_{\infty}
        \end{equation}
    \end{enumerate}
\end{theorem}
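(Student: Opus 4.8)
The plan is to treat the two extensions as formally dual: part (1) via a right Kan extension implementing \v{C}ech descent along the inclusion of correspondence categories, and part (2) via a left Kan extension implementing codescent while enlarging the exceptional class. I would first record that an inclusion of geometric setups is functorial for the $\op{Corr}$-construction, so that it induces lax symmetric monoidal functors $\iota : \CrrCpEal \to \op{Corr}(\Ca')_{\E',\op{all}}$ and $\iota' : \CrrCpEal \to \op{Corr}(\Ca)_{\E',\op{all}}$. Each extension then amounts to solving a lifting problem of the shape
\begin{equation*}
    \begin{tikzcd}
        \CrrCpEal \arrow[r,"\D_{(\Ca,\E)}"] \arrow[d,hook,"\iota"'] & \op{Cat}_{\infty} \\
        \op{Corr}(\Ca')_{\E',\op{all}} \arrow[ru,dashed] & {}
    \end{tikzcd}
\end{equation*}
in the world of lax symmetric monoidal functors, and analogously for $\iota'$.

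For part (1) I would define the extension as the right Kan extension $\op{Ran}_{\iota}\,\D_{(\Ca,\E)}$ and then verify that the descent hypothesis makes it computable and monoidal. Concretely, an atlas $x : X \to X'$ from condition (c) is sent by $\pi_{\op{all}}$ to a morphism $X' \to X$ in the target correspondence category, so the comma category controlling the Kan extension at $X'$ contains the \v{C}ech nerve $\check{C}(x)_{\bb}$; since the face maps of this nerve are pullbacks landing in $\S \subset \Ca_1$, the nerve is a simplicial object of $\Ca$, and I would argue it is cofinal, giving $\D_{(\Ca',\E')}(X') \simeq \lim_{\Delta}\,\D_{(\Ca,\E)}(\check{C}(x)_{\bb})$. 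The descent hypothesis on $\D^*$ along $\S$-\v{C}ech covers then shows this agrees with $\D_{(\Ca,\E)}$ on $\Ca$ and, by comparing two atlases through their fiber product over $X'$, is independent of $x$. For functoriality one pulls an atlas of $Y'$ back along $f : X' \to Y'$ (again by (c)) to get a map of \v{C}ech nerves and hence $f^*$; for $f \in \E'$, condition (d) forces the base change to the atlas to lie in $\E$, so $f_!$ is defined by the same descent recipe.

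Part (2) is the colimit-dual. For $f : X \to Y$ in $\E'$ I would use the augmented simplicial resolution $f_{\bb} : X_{\bb} \to Y_{\bb}$ provided by the exceptional-pair hypothesis, whose terms $f_n$ lie in $\E$ and whose augmentations are $\S$-hypercovers, and set $f_! := \cl_{\Delta^{\op{op}}}\,(f_n)_!$, i.e. the left Kan extension $\op{Lan}_{\iota'}$ restricted to the exceptional direction. The codescent hypothesis on $\D_!$ along $\S$-hypercovers renders this independent of the resolution and compatible with composition, while base change for the extended $f_!$ is obtained by resolving the relevant square simplicially and reducing to base change in $\E$, which is already available. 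Gluing the new $\D_!$ to the unchanged $\D^*$ across $\op{Corr}(\Ca)_{\E',\op{all}}$ uses the $\infty$-categorical compactification theorem \cref{compthm} to control the interaction of the enlarged exceptional class with the $*$-direction.

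In both parts the comparatively routine point is that the adjoints $f_*, f^!$ and $\uhom$ persist: they are inherited from the adjoints on $\Ca$ through the defining (co)limit, the extended $\D(X')$ remaining presentable so that the relevant adjoint functor theorems apply. The genuine obstacle is the \emph{lax symmetric monoidal coherence} of the extension over the entire correspondence category: one must check that the descent (respectively codescent) data assemble compatibly with the multisimplicial encoding of the tensor structure from the second article, rather than merely extending the separate functors $\D^*$ and $\D_!$. This is precisely where the partial-adjoint technique and the lifting theorem \cref{maintechnicalsimplicesthm} of the first article carry the load, and where hypotheses (b)--(d) on the pairs are used in full.
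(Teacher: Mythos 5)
Your overall strategy --- pointwise right/left Kan extension along the inclusion $\iota : \CrrCpEal \to \op{Corr}(\Ca')_{\E',\op{all}}$ --- is not the route the paper takes, and as written it has gaps at exactly the points you flag as delicate. The first problem is the cofinality claim. For $\op{Ran}_{\iota}\D_{(\Ca,\E)}$ evaluated at $X'$, the indexing comma category consists of \emph{all correspondences} $X' \leftarrow W \to \iota(c)$ with $c \in \Ca$, not merely the maps $X \to X'$ in $\Ca'$; since backward legs of spans are arbitrary morphisms and forward legs lie in $\E'$, this category is far larger than the \v{C}ech nerve of a single atlas, and the assertion that $\check{C}(x)_{\bb}$ is cofinal in it is a substantial unproved claim (it is not clear it even holds without further hypotheses). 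The second problem is that a right Kan extension of a lax symmetric monoidal functor is not automatically lax symmetric monoidal, and your proposal defers this --- ``the genuine obstacle'' --- to the partial-adjoint machinery and \cref{maintechnicalsimplicesthm}, which are tools for \emph{constructing} formalisms out of adjointability data, not for propagating operadic coherence through a Kan extension. Similarly, invoking \cref{compthm} to ``glue the new $\D_!$ to the unchanged $\D^*$'' in part (2) misapplies that theorem: compactification decomposes morphisms of $\E$ into $\I$ and $\P$ and has no bearing on hypercover resolutions.

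The paper's proof (\cref{extendingsixfunctorCside} and \cref{extendingsixfunctorEside}) sidesteps all of this with a single idea you are missing: it realizes $\op{Corr}(\Ca')^{\otimes}_{\E',\op{all}}$ (resp.\ $\op{Corr}(\Ca)^{\otimes}_{\E',\op{all}}$) as a Dwyer--Kan localization of an auxiliary operadic correspondence category $\CrrCovopEal$ (resp.\ $\CrrCovEopEal$) built on the category $\op{Cov}(\Ca)$ of \v{C}ech nerves of atlases (resp.\ $\S$-hypercovers), along an explicit class $R$ of morphisms of covers; the localization property is verified by the elementary criterion \cref{localizationcriterion} (surjectivity on simplices plus existence of fiber products of covers). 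One then writes down a single functor $\phi = \phi_3 \circ \phi_2 \circ \phi_1$ on the auxiliary category --- push covers into $\op{Fun}(N(\Delta),\op{Corr}(\Ca)^{\otimes}_{\E,\op{all}})$ via \cref{Corrfunctorialmapdual}, postcompose with $\D_{(\Ca,\E)}$, and take the limit (resp.\ colimit) --- and checks that the descent (resp.\ codescent) hypothesis forces $\phi$ to invert $R$, using the two-out-of-three property applied to the projections from a fiber product of two atlases. The universal property of the localization then produces the extension with its operadic structure for free, since everything is carried out at the level of $(-)^{\otimes}$. That localization step is the actual content of the proof, and your proposal does not contain a substitute for it.
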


The above theorem is a simplified version of the DESCENT program developed by Liu-Zheng (\cite{liu2017enhanced}), an algorithm to extend abstract six-functor formalisms and other properties. The proof uses the theory of Dwyer-Kan localizations. Similar extension results are also available in \cite[Appendix A.5]{padic6functorlucasmann}. \\

We now briefly outline the main sections of this article :
\begin{enumerate}
    \item In Section 2, we briefly recall the main results from the previous two articles, which involve stating \cref{maintechnicalsimplicesthm} and the $\infty$-categorical compactification (\cref{compthm}).
    \item In Section 3, we prove the theorem of partial adjoints. It is a technical theorem that allows us to encode various functors simultaneously by taking adjoints along specific morphisms. This plays a crucial tool in constructing the abstract six-functor formalisms. We briefly recall the notion of left and right adjointable squares due to Lurie, followed by stating and proving the partial adjoints theorem (\cref{prtadj}).
    \item In Section 4, we recall the notion of $\infty$-category of correspondences and prove its main properties, which includes establishing the symmetric monoidal structure of $\infty$-category of correspondences. This section has already appeared in the Appendix of joint work of the author with Alessandro D'Angaelo in \cite{chowdhury2024nonrepresentablesixfunctorformalisms}. The section discusses the $\infty$-category of correspondences with the language of bisimplicial sets (\cref{correspondencebisimplicialsetsequivalence.}). It serves as the last step in proving \cref{mainconstructiontheorembrief}.
    \item In Section 5, we recall abstract 3 and 6-functor formalisms due to Mann and prove \cref{mainconstructiontheorembrief}.
    \item The last section involves proving the extension of six-functor formalisms for nice geometric and exceptional setups (\cref{extendinssixfunctorCEside}). These results have also appeared in the Appendix of \cite{chowdhury2024nonrepresentablesixfunctorformalisms}.
    \item In the Appendix, we briefly recall the notion of Dwyer-Kan localizations and prove an essential criterion for Dwyer-Kan localizations (\cref{localizationcriterion}). This serves as a key input for proving \cref{extendinssixfunctorCEside}.
\end{enumerate}

\subsection*{Acknowledgements:}
The paper was written while the author was a PostDoc under Prof.Dr. Timo Richarz at University of TU Darmstadt. C.Chowdhury acknowledges support (through Timo Richarz) by the European Research Council (ERC) under Horizon Europe (grant agreement nº 101040935), by the Deutsche Forschungsgemeinschaft (DFG, German Research Foundation) TRR 326 \textit{Geometry and Arithmetic of Uniformized Structures}, project number 444845124 and the LOEWE professorship in Algebra, project number LOEWE/4b//519/05/01.002(0004)/87. \\

The author would like to thank Alessandro D'Angelo for discussing technical details regarding the category of correspondences and abstract six-functor formalisms. He would also like to thank R{\i}zacan \c{C}ilo\u{g}lu for helpful discussions regarding the paper.

\subsection*{Conventions}
The paper relies on notations and definitions from the papers \cite{Gluerestnerv} and \cite{liu2017enhanced}. We shall omit referencing the documents as it will be implicit throughout the article. We also encourage the reader to look into the previous articles \cite{chowdhury2023sixfunctorformalismsi} and \cite{chowdhury2024sixfunctorformalismsii} for a detailed understanding of the recollection of the results. The results in all of the three articles combined is self-contained as possible without referencing \cite{Gluerestnerv} and \cite{liu2017enhanced}.\\
Lastly, we also freely use the language of $\infty$-categories developed by Lurie in \cite{HTT}, \cite{HA} and \cite{SAG}. 
\section{Recollection of notations and results.}
In this section, we recall the relevant notions and from \cite{chowdhury2023sixfunctorformalismsi}.and \cite{chowdhury2024sixfunctorformalismsii}

\subsection{Constructing functors using category of simplices..}
\begin{definition}
    Let $\J$ be a (small) ordinary category. Let $(\sset)^{\J}$ be the category where objects are functors from $\J \to \sset$ and morphisms are natural transformations.
\end{definition}
We now introduce the constant and global section functor related to $(\sset)^{\J}$.
\begin{notation}
    For every simplicial set $X$, we have the constant simplicial set functor $c(X):=X_{\J}$ defined by sending any object $j$ to the simplicial set $X$. The association is functorial and thus we have a functor :
     \[ c: \sset \to (\sset)^{\J}\]
    
\end{notation}

\begin{definition}\label{globalsectionfunctor}
We define the \textit{global section functor} 
\[ \Gamma : (\sset)^{\J} \to \sset\]
as follows : 
\[ \Gamma(F) =(\Gamma(F)_n := \op{Hom}_{(\sset)^{\J})}(\Delta^n_{\J},F))_n.\]
\end{definition}
\begin{example}
Let $F$ be the constant functor $c(X)$ where $X \in \sset$. Let us compute $\Gamma(F)$.
The $n$-simplices of $\Gamma(F)$ are given by the set of natural transformations from $\Delta^n_J \to c(X)$. Every such natural transformation is equivalent to give a single map $\Delta^n \to X$. In particular the $n$-simplices of $\Gamma(F)$ are given by $n$-simplices of $X$.Thus $\Gamma(F)= X$. In particular, we prove that $\Gamma \circ c = \op{id}_{\sset}$.

\end{example}
\begin{remark}
Recall from classical category theory, given a complete category $\Ca$ and an small category $I$, we have the pair of adjoint functors: 
 \[ c: \Ca \leftrightarrows \op{Fun}(I,\Ca) : \op{lim}\]
 where $\op{lim}$ is the functor which takes an object which is a functor $F : I \to \Ca$ to its limit $\op{lim}(F) \in \Ca$. \\

 Let $\Ca= \sset$ and $I=\J$. As the category of simplicial sets is complete,  we see that 
 \[ \Gamma = \op{lim}. \]
 In the other words, the global section functor is the limit functor which takes every functor to its limit in the category of simplicial sets.
 
\end{remark}

\begin{definition}\label{catofsimpldef}
    Let $K$ be a simplicial set. Then the \textit{category of simplicies over $K$} is a category consisting of :
    \begin{enumerate}
        \item Objects : $(n,\sigma)$ where $n \ge 0$ and $\sigma \in K_n$.
        \item Morphisms: $ p:(n,\sigma)\to (m,\sigma')$ is a morphism $p: [n] \to [m]$ such that $p(\sigma)=\sigma'$.
    \end{enumerate}
\end{definition}

The relevant functor associated to the category of simplices is the mapping functor. 
\begin{definition}\label{mappingfunctor}
    Let $K$ be a simplicial set and $\Ca$ be a $\infty$-category. The \textit{mapping functor} \[ \op{Map}[K,\Ca] : (\Delta_{/K})^{op} \to \sset \] is defined as follows: 
    \[ (n,\sigma) \to \op{Map}^{\sharp}((\Delta^n)^{\flat},\Ca^{\natural}) \cong \op{Fun}^{\cong}(\Delta^n,\Ca). \]
     Here $\op{Map}^{\sharp}((\Delta^n)^{\flat},\Ca^{\natural})$ is the internal mapping space in the category of marked simplicial sets. It is the largest Kan complex contained in $\op{Fun}(\Delta^n,\Ca)$.
 \end{definition}

 \begin{remark}

       For a simplicial set $K$  and an $\infty$-category $\Ca$, we have the following equality of simplicial sets: 
     \[ \Gamma(\op{Map}[K,\Ca]) = \op{Map}^{\sharp}(K^{\flat},\Ca). \] This is proved in \cite[Lemma 3.3.3]{chowdhury2023sixfunctorformalismsi}

 \end{remark}

 We recall the main theorem from \cite{chowdhury2023sixfunctorformalismsi}. 
 \begin{theorem}\label{maintechnicalsimplicesthm}\cite[Theorem 4.1.1]{chowdhury2023sixfunctorformalismsi}
Let $K',K$ be simplicial sets and $\Ca$ be a $\infty$-category. Let $f': K' \to \Ca$ and $i:K' \to K$ be morphisms of simplicial sets. Let $\N \in (\sset)^{(\Delta_{/K})^{op}}$ and $\alpha: \N \to \op{Map}[K,\Ca]$ be a natural transformation. If
\begin{enumerate}
 \item (\textit{Weakly contractibility}) for $(n,\sigma) \in \Delta_{/K}$, $\N(n,\sigma)$ is weakly contractible,
 \item (\textit{Compatability with $f'$}) there exists $\omega \in \Gamma(i^*\N)_0$ such that $\Gamma(i^*\alpha)(\omega)=f'$,
 \end{enumerate}
 
then there exists a map $f: K \to \Ca$ such that the following diagram 
\begin{equation}
    \begin{tikzcd}
        K' \arrow[r,"f'"] \arrow[d,"i"] & \Ca \\
        K \arrow[ur,"f"] & {}
    \end{tikzcd}
\end{equation}
 commutes. In other words, $f' \cong f \circ i $ in $\op{Fun}(K',\Ca)$.
\end{theorem}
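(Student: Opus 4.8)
The plan is to reduce the construction of $f$ to producing a single suitable vertex and then feeding it through $\alpha$. The starting point is the recalled identification $\Gamma(\op{Map}[K,\Ca]) = \op{Map}^{\sharp}(K^{\flat},\Ca)$, whose vertices are precisely the maps $K \to \Ca$; so constructing $f$ is the same as exhibiting a $0$-simplex of $\Gamma(\op{Map}[K,\Ca])$. Since the value $\op{Map}[K,\Ca](n,\sigma) = \op{Fun}^{\cong}(\Delta^{n},\Ca)$ depends only on $n$ and not on $\sigma$, the restricted functor $i^{*}\op{Map}[K,\Ca]$ coincides on the nose with $\op{Map}[K',\Ca]$, whence $\Gamma(i^{*}\op{Map}[K,\Ca]) = \op{Map}^{\sharp}(K'^{\flat},\Ca)$ has the maps $K' \to \Ca$ as vertices, and the restriction-along-$i$ map is exactly $f \mapsto f \circ i$. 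Thus I would (i) produce a vertex $\tilde\omega \in \Gamma(\N)_{0}$ whose image $i^{*}\tilde\omega \in \Gamma(i^{*}\N)_{0}$ agrees, at least up to homotopy, with the given $\omega$, and (ii) set $f := \Gamma(\alpha)(\tilde\omega)$. Naturality of $\alpha$ under $\Gamma$ and $i^{*}$ then gives $f \circ i = \Gamma(i^{*}\alpha)(i^{*}\tilde\omega) \cong \Gamma(i^{*}\alpha)(\omega) = f'$, where the middle equivalence uses that $\Gamma(i^{*}\alpha)$ is a map of Kan complexes and hence sends homotopic vertices to equivalent maps in $\op{Fun}(K',\Ca)$; this is the asserted $f' \cong f \circ i$.

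Everything therefore reduces to lifting the vertex $\omega$ along the restriction map $r : \Gamma(\N) \to \Gamma(i^{*}\N)$ induced by the functor $i : \Delta_{/K'} \to \Delta_{/K}$ on categories of simplices. First I would treat the essential case in which $i$ is a monomorphism (the general case follows by factoring $i$, since the conclusion is only demanded up to equivalence). Then $\Delta_{/K'}$ embeds in $\Delta_{/K}$ as a full subcategory which is a \emph{sieve}: any morphism whose target is a simplex factoring through $K'$ has source factoring through $K'$ as well, because faces and degeneracies of a $K'$-simplex are again $K'$-simplices. Concretely, a vertex of $\Gamma(\N)$ is a compatible family $\{\omega_{(n,\sigma)} \in \N(n,\sigma)_{0}\}$ over $(\Delta_{/K})^{\op{op}}$, and $r$ is the restriction of such a family to the $K'$-simplices; the task is to extend the family given on $K'$-simplices to all of $\Delta_{/K}$.

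I would carry out this extension by transfinite induction on the nondegenerate simplices of $K$ not lying in $K'$, ordered by dimension. At the stage of a nondegenerate $n$-simplex $\sigma$, the values already chosen on its faces and degeneracies assemble into a point of the matching object $M_{\sigma}$, and choosing $\omega_{(n,\sigma)}$ compatibly amounts to lifting this point along the matching map $\N(n,\sigma) \to M_{\sigma}$. This is where the main obstacle sits: weak contractibility of the spaces $\N(n,\sigma)$ produces a lift only once the matching maps are known to be fibrations, so the honest work is to organize the whole inductive lifting problem inside the model category of marked simplicial sets and present $r$ there as a trivial fibration (in particular surjective on vertices). This is precisely what the machinery of \cite{chowdhury2023sixfunctorformalismsi} is designed to supply, by exhibiting $r$ as a map with the right lifting property against boundary inclusions. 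Granting that the matching maps are (trivial) fibrations, weak contractibility of each $\N(n,\sigma)$ makes every fiber contractible, hence nonempty, so the induction goes through and yields $\tilde\omega$ restricting to $\omega$ (on the nose if $r$ is a genuine trivial fibration, otherwise up to homotopy, which is all the statement needs).

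Finally, with $\tilde\omega$ in hand I would set $f := \Gamma(\alpha)(\tilde\omega) \in \Gamma(\op{Map}[K,\Ca])_{0}$, read this off as a map $K \to \Ca$ through the identification with $\op{Map}^{\sharp}(K^{\flat},\Ca)$, and invoke the naturality square of the first paragraph to conclude $f' \cong f \circ i$ in $\op{Fun}(K',\Ca)$. I expect the bookkeeping of the two hypotheses to split cleanly along this line: weak contractibility is exactly what drives the inductive solvability of the matching lifts (ensuring existence and homotopical uniqueness of $\tilde\omega$), while the compatibility hypothesis on $\omega$ is what pins down the restriction of the constructed $f$ to the prescribed $f'$.
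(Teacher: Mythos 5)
You should first note that this paper does not actually prove \cref{maintechnicalsimplicesthm}; it only recalls the statement from the first article of the series, so there is no in-paper proof to match against. Your overall frame is the right one: identify vertices of $\Gamma(\op{Map}[K,\Ca])$ with maps $K\to\Ca$, build such a vertex compatibly with the data over $K'$, and conclude by naturality. The sieve observation about $\Delta_{/K'}\subset\Delta_{/K}$ and the skeletal induction over nondegenerate simplices of $K$ relative to $K'$ are also in the spirit of the actual argument.

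However, there is a genuine gap at the pivot of your reduction: you propose to produce a \emph{strict} global section $\tilde\omega\in\Gamma(\N)_0=(\lim_{(\Delta_{/K})^{\op{op}}}\N)_0$ and then push it through $\alpha$. This set can be empty even when every $\N(n,\sigma)$ is weakly contractible, because strict limits do not preserve weak equivalences: already for a parallel pair $\N(1,e)\rightrightarrows\N(0,v)$ with both values equal to $\Delta^1$, one map the identity and the other the flip, the equalizer has no vertices. Nothing in the hypotheses makes $\N$ Reedy or injectively fibrant, so the matching maps $\N(n,\sigma)\to M_\sigma$ that drive your induction need not be fibrations, and "granting that the matching maps are (trivial) fibrations" assumes exactly what has to be proved; deferring this to "the machinery of \cite{chowdhury2023sixfunctorformalismsi}" is circular, since that machinery \emph{is} the proof. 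The fibrancy has to be located elsewhere: either one first takes a (projectively/Reedy) fibrant replacement of $\N$ and uses that $\op{Map}[K,\Ca]$ is itself fibrant to transport $\alpha$ across the replacement, or one sections $\op{Map}[K,\Ca]$ directly, whose matching maps $\op{Fun}^{\cong}(\Delta^n,\Ca)\to\op{Fun}^{\cong}(\partial\Delta^n,\Ca)$ \emph{are} Kan fibrations because $\Ca$ is an $\infty$-category. In either formulation the weakly contractible spaces $\N(n,\sigma)$ play the role of parameter spaces guaranteeing that the successive relative lifting problems for a section of $\op{Map}[K,\Ca]$ are solvable and homotopically unique; they are not themselves the object to be sectioned. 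As written, your step (i) can fail outright, and the "up to homotopy" hedge does not repair it, since it loosens only the compatibility with $\omega$ and not the existence of $\tilde\omega$.
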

\begin{remark}
    The above theorem has been a key technical tool in the formalism of abstract six-functor formalisms. This theorem has been useful in proving the $\infty$-categorial compactification which is the content of \cite{chowdhury2024sixfunctorformalismsii} and has been recalled in the following subsection. \\

    Putting $K' = \phi$ the empty simplicial set also provides a way of constructing morphisms between simplicial sets. This shall be used in the context of partial adjoints (\cref{prtadj}).
\end{remark}

\subsection{The $\infty$-categorical compactification.}
Let $\Ca$ be an $\infty$-category and let $(\E_1,\E_2)$ be a pair of collection of edges in $\Ca$ satisfying some nice conditions (see \cref{compthm} for more details). Let us consider a new simplicial set $\dd^*_2 \Ca^{\op{cart}}_{\E_1,\E_2}$. The $n$-simplices of $\dd^*_2\Ca^{\op{cart}}_{\E_1,\E_2}$ are $n \times n$ grids of the form 
	
	\begin{equation}
		\begin{tikzcd}
			X_{00} \arrow[r] \arrow[d] & X_{01} \arrow[r] \arrow[d] & \cdots & X_{0n} \arrow[d] \\
			X_{10} \arrow[r]  & X_{11} \arrow[r] & \cdots & X_{1n} \\
			\vdots & \vdots & \vdots & \vdots \\
			X_{n1} \arrow[r] & X_{n2} & \cdots & X_{nn}
		\end{tikzcd}
	\end{equation}
	where vertical arrows are in $\E_1$, horizontal arrows are in $\E_2$ and each square is a pullback square. Also one has a natural morphism $p:\dd^*_2\Ca^{\op{cart}}_{\E_1,\E_2} \to \Ca$ induced by composition along the diagonal. Before stating the main theorem, let us  recall the notion of admissible edges.
	\begin{definition}
		Let $\Ca$ be an $\infty$-category. Let $\E$ be a collection of morphisms in $\Ca$. Then $\E$ is said to be \textit{admissible} if 
		\begin{enumerate}
			\item $\E$ contains every identity morphism in $\Ca$.
			\item $\E$ is stable under pullbacks.
			\item For every pair of composable morphisms $p \in \E$ an $q$ a morphism in $\Ca$, then if $p \circ q \in \E $ implies $ q \in \E$. 
		\end{enumerate}
	\end{definition}
	
	\begin{theorem}\label{compthm}
		Let $\Ca$ be an $\infty$-category and $\E_1,\E_2$ be a collection of edges in $\Ca$ with the following conditions:
  \begin{enumerate}
      \item For every morphism in $f \in \Ca$, there exists a $2$-simplex in $\Ca$ of the form :
      \begin{equation}
          \begin{tikzcd}
              {} & y \arrow[dr,"p"] & {}\\
              x \arrow[ur,"q"] \arrow[rr,"f"] && z
              \end{tikzcd}
              \end{equation}
              where $p \in \E_1$ and $q \in \E_2$.
\item Every morphism $f \in \E_1\cap \E_2$ is $k$-truncated for $k \ge -2$.
 
\item The edges $\E_1$ and $\E_2$ are admissible.

  \end{enumerate}
  Then for any $\infty$-category $\D$, there exists a solution to the lifting problem:
\begin{equation}
\begin{tikzcd}
    \dd^*_2\Ca^{\op{cart}}_{\E_1,\E_2} \arrow[d,"p"] \arrow[r,"g"] &\D\\
    \Ca \arrow[ur,"g'",dotted] & {}.
    \end{tikzcd}
\end{equation}
  
	\end{theorem}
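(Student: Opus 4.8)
The plan is to deduce the statement from the main technical theorem \cref{maintechnicalsimplicesthm}, applied with its target $\infty$-category taken to be $\D$ and its source simplicial set $K$ taken to be $\Ca$. Write $\G := \dd^*_2\Ca^{\op{cart}}_{\E_1,\E_2}$ for brevity. I would invoke the theorem with $K' := \G$, with $i := p\colon \G \to \Ca$ the diagonal-composition map, and with $f' := g\colon \G \to \D$. Its conclusion then produces a map $g'\colon \Ca \to \D$ together with an equivalence $g \cong g' \circ p$ in $\op{Fun}(\G,\D)$, which is precisely the desired solution of the lifting problem. Everything thus reduces to manufacturing the datum $(\N,\alpha)$ and verifying the two hypotheses of \cref{maintechnicalsimplicesthm}.

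For the functor $\N \in (\sset)^{(\Delta_{/\Ca})^{\op{op}}}$ I would take $\N(n,\sigma)$ to be the space of \emph{$(\E_1,\E_2)$-Cartesian compactifications of $\sigma$}: the simplicial set whose simplices are simplices of $\G$ lying over $\sigma$ under $p$, i.e. $(\E_1,\E_2)$-Cartesian grids whose diagonal composite is the chosen simplex $\sigma$, with functoriality in $(\Delta_{/\Ca})^{\op{op}}$ inherited from the simplicial structure of $\G$. The natural transformation $\alpha\colon \N \to \op{Map}[\Ca,\D]$ is induced by $g$: recalling $\op{Map}[\Ca,\D](n,\sigma) \cong \op{Fun}^{\cong}(\Delta^n,\D)$, I send a compactification grid $G$ over $\sigma$ to the $n$-simplex $g(G)$ of $\D$.

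The compatibility hypothesis of \cref{maintechnicalsimplicesthm} I would verify by exhibiting the \emph{tautological} global section $\omega \in \Gamma(p^*\N)_0$: over a simplex $G$ of $\G$ (which maps to $\sigma := p(G)$) the canonical compactification of $\sigma$ is $G$ itself, so $\omega$ assigns $G \mapsto G \in \N(n,p(G))$. Unwinding the definitions, $\Gamma(p^*\alpha)(\omega)$ sends $G$ to $\alpha(G) = g(G)$, whence $\Gamma(p^*\alpha)(\omega) = g = f'$. This choice is exactly what forces the resulting lift to satisfy $g \cong g' \circ p$, so this step is formal once $\N$ and $\alpha$ are in place.

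The real content — and the step I expect to be the main obstacle — is the weak-contractibility hypothesis: for every $(n,\sigma) \in \Delta_{/\Ca}$ the space $\N(n,\sigma)$ must be weakly contractible. Nonemptiness is immediate from the factorization hypothesis (every morphism factors as an $\E_2$-morphism followed by an $\E_1$-morphism), together with admissibility, which lets one fill the remaining corner of each square by a pullback while keeping horizontal edges in $\E_2$ and vertical edges in $\E_1$. To upgrade nonemptiness to contractibility I would argue by induction on $n$, presenting $\N(n,\sigma)$ as a space of sections assembled from the factorization spaces of the individual edges, and then exploit the $k$-truncatedness hypothesis: since the comparison map between any two factorizations of a morphism lies in $\E_1 \cap \E_2$ and is therefore $k$-truncated, the space of such comparisons is realizable as a space of lifts against a $k$-truncated map with prescribed boundary, hence contractible; a secondary induction on the truncation level $k \ge -2$ (base case: a $(-2)$-truncated comparison map is an equivalence, so the factorization is literally unique) then propagates these contractibilities coherently through the grid. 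Establishing this in the homotopy-coherent form (all $n$ simultaneously) is the genuinely hard part, whereas the reduction via \cref{maintechnicalsimplicesthm} and the verification of compatibility are essentially formal.
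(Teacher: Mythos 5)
Your reduction is exactly the one the paper relies on: \cref{compthm} is not reproved in this article but imported from the second paper of the series, whose proof applies \cref{maintechnicalsimplicesthm} with $K=\Ca$, $K'=\dd^*_2\Ca^{\op{cart}}_{\E_1,\E_2}$, $i=p$, with $\N(n,\sigma)$ the simplicial set of compactifications of $\sigma$ and the tautological section furnishing the compatibility datum, so that the whole theorem rests on weak contractibility of the compactification spaces. You correctly isolate that contractibility (established there by induction using admissibility and the $k$-truncatedness of $\E_1\cap\E_2$) as the genuine content; the only caution is that your description of $\N(n,\sigma)$ as ``simplices of $\dd^*_2\Ca^{\op{cart}}_{\E_1,\E_2}$ lying over $\sigma$'' must be replaced by the precise (bi)simplicial definition of the set of compactifications for the functoriality in $(\Delta_{/\Ca})^{\op{op}}$, the existence of the tautological global section, and the inductive contractibility argument to go through.
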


\begin{remark}
    The above theorem is the $\infty$-categorical version of Deligne's compactifications while constructing exceptional pushforward functors in the context of \'etale cohomology of schemes.
\end{remark}

We state now a more general version of the theorem which is relevant for this paper.

\begin{theorem}\label{compthmalledge}
    		Let $(\Ca,\E)$ be a marked $\infty$-category and $\E_1,\E_2$ be a collection of edges in $\Ca$ with the following conditions:
  \begin{enumerate}
      \item For every morphism in $f \in \E$, there exists a $2$-simplex in $\Ca$ of the form :
      \begin{equation}
          \begin{tikzcd}
              {} & y \arrow[dr,"p"] & {}\\
              x \arrow[ur,"q"] \arrow[rr,"f"] && z
              \end{tikzcd}
              \end{equation}
              where $p \in \E_1$ and $q \in \E_2$.
\item Every morphism $f \in \E_1\cap \E_2$ is $k$-truncated for $k \ge -2$.
 
\item The edges $\E_1,\E_2$ and $\E$ are admissible.

  \end{enumerate}
  Then for any $\infty$-category $\D$, there exists a solution to the lifting problem:
\begin{equation}
\begin{tikzcd}
    \dd^*_3\Ca^{\op{cart}}_{\E_1,\E_2,\op{ALL}} \arrow[d,"p"] \arrow[r,"g"] &\D\\
    \dd^*_2\Ca_{\E,\op{all}} \arrow[ur,"g'",dotted] & {}.
    \end{tikzcd}
\end{equation}
\end{theorem}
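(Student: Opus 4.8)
The plan is to mirror the proof of \cref{compthm} and reduce the lifting problem to the technical theorem \cref{maintechnicalsimplicesthm}, now applied with $K = \dd^*_2\Ca_{\E,\op{all}}$ and $K' = \emptyset$ (using the empty-source version flagged in the remark after \cref{maintechnicalsimplicesthm}). First I would build a functor $\N \in (\sset)^{(\Delta_{/K})^{\op{op}}}$ sending an $n$-simplex $\sigma$ of $\dd^*_2\Ca_{\E,\op{all}}$ to the marked space of lifts of $\sigma$ along $p$; concretely $\N(n,\sigma)$ is the space of ways to compactify the $\E$-directed edges of $\sigma$ into $(\E_1,\E_2)$-factorizations with cartesian squares while propagating the data coherently in the $\op{ALL}$-direction. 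The natural transformation $\alpha\colon \N \to \op{Map}[K,\D]$ is post-composition with $g$, and restriction in $(\Delta_{/K})^{\op{op}}$ makes both $\N$ and $\alpha$ functorial. Because $K' = \emptyset$, hypothesis (2) of \cref{maintechnicalsimplicesthm} is automatic, so the entire content reduces to verifying hypothesis (1): that each $\N(n,\sigma)$ is weakly contractible.

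The crux is therefore this weak contractibility. The key observation is that the $\op{ALL}$-direction of $\sigma$ is merely transported along, so the only genuinely new data is the compactification of the $\E$-edges, which is governed by exactly the three hypotheses. Condition (1) supplies, for each $\E$-morphism, an $(\E_1,\E_2)$-factorization; condition (2), the $k$-truncatedness of morphisms in $\E_1 \cap \E_2$, forces the space of such factorizations to be contractible --- this is the same truncation argument driving \cref{compthm}; and admissibility of $\E,\E_1,\E_2$ (condition 3) guarantees that the cartesian squares required against the $\op{ALL}$-direction both exist and preserve the classes $\E_1$ and $\E_2$. One then exhibits $\N(n,\sigma)$ as fibered over the contractible space of $(\E_1,\E_2)$-compactifications with fibers that are themselves contractible (the $\op{ALL}$-pullbacks being determined up to contractible choice by a right-Kan-extension/right-fibration argument), and concludes weak contractibility.

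I expect the cleanest implementation is to factor the argument through \cref{compthm} itself: for fixed data in the $\op{ALL}$-direction one is solving precisely the compactification lifting problem of \cref{compthm} relative to the pulled-back situation, with the extra direction absorbed by the stability of the edge classes under pullback. The main obstacle will be the bookkeeping of coherence between two families of cartesian squares --- those of the $(\E_1,\E_2)$-compactification in the first two directions and those against the $\op{ALL}$-direction --- namely checking that forming the $\op{ALL}$-pullback of an $(\E_1,\E_2)$-compactification again yields an $(\E_1,\E_2)$-compactification, not just on a single square but across the full $n$-fold grids. This coherence is exactly what admissibility (stability under pullback together with the cancellation property in its definition) is engineered to deliver; turning it into the contractibility of $\N(n,\sigma)$ at the level of entire grids, rather than edge by edge, is the technical heart of the argument. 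With that in hand, \cref{maintechnicalsimplicesthm} produces the desired lift $g'$ with $g' \circ p \cong g$, completing the proof.
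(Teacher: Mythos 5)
Your overall strategy is the one the paper intends: the paper's own ``proof'' of \cref{compthmalledge} is a one-line remark that the argument is verbatim that of \cref{compthm}, carried out with one extra direction and with the simplicial set of compactifications restricted to edges in $\E$, and your reduction to \cref{maintechnicalsimplicesthm} via a weakly contractible presheaf of compactification data, with the $\op{ALL}$-direction absorbed by admissibility and pullback-stability, matches that blueprint. Your identification of the two hypotheses' roles (truncatedness for contractibility of the space of $(\E_1,\E_2)$-factorizations, admissibility for coherence of the cartesian grids) is also the right reading.

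There is, however, one concrete error: you take $K' = \emptyset$. With an empty source, \cref{maintechnicalsimplicesthm} produces \emph{some} map $g'\colon \dd^*_2\Ca_{\E,\op{all}} \to \D$, but its conclusion ``$f' \cong f\circ i$'' becomes vacuous, so nothing forces $g'\circ p \cong g$ --- and that compatibility is the entire content of the lifting problem. Your remark that ``$\alpha$ is post-composition with $g$'' makes the values of $g'$ on each simplex agree with $g$ up to a choice in a contractible space, but that is pointwise agreement, not the coherent equivalence $g'\circ p \cong g$ in $\op{Fun}(\dd^*_3\Ca^{\op{cart}}_{\E_1,\E_2,\op{ALL}},\D)$; the theorem's hypothesis (2) is precisely the mechanism that upgrades the former to the latter. (The $K'=\emptyset$ variant is flagged in the paper for the partial-adjoints theorem, where there is no prescribed restriction to match.) The fix is routine but necessary: take $K' = \dd^*_3\Ca^{\op{cart}}_{\E_1,\E_2,\op{ALL}}$, $i = p$, $f' = g$, and verify hypothesis (2) by exhibiting the tautological vertex $\omega \in \Gamma(p^*\N)_0$ given by the fact that every simplex $\tau$ of the source is itself a compactification of $p(\tau)$, so that $\Gamma(p^*\alpha)(\omega) = g$. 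With that correction the rest of your outline goes through as in \cref{compthm}.
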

\begin{remark}
    The proof of the above theorem follows verbatim as the proof of \cref{compthm}. We need to keep track of an additional edge here and follow the proofs of the relevant statements with an additional direction. In order to do this, one needs the definition of simplicial set of compactifications to have compactifications only for edges in $\E$. 
    
    \end{remark}
\section{Partial adjoints.}
In this section, we prove the theorem of partial adjoints. It is the last key tool; that we need in order to construct six-functor formalisms. The idea of abstract six-functor formalism is to define a functor which must encode the six-functors in a compatible way. In particular we need to encode the pullback functors and exceptional functors alltogether so that we get base change. As base changes include cartesian squares, it is natural to use the language of multisimplicial sets. Let us provide a short intro on the importance of partial adjoints.\\

Let $(\Ca,\E_1,\E_2)$ be a two marked $\infty$-category. Consider the simplicial set $\dd^*_2\Ca^{\op{cart}}_{\E_1,\E_2}$. The $n$-simplices of $\dd^*_2\Ca^{\op{cart}}_{\E_1,\E_2}$ are $n \times n$ grids of the form 
	
	\begin{equation}
		\begin{tikzcd}
			X_{00} \arrow[r] \arrow[d] & X_{01} \arrow[r] \arrow[d] & \cdots & X_{0n} \arrow[d] \\
			X_{10} \arrow[r]  & X_{11} \arrow[r] & \cdots & X_{1n} \\
			\vdots & \vdots & \vdots & \vdots \\
			X_{n1} \arrow[r] & X_{n2} & \cdots & X_{nn}
		\end{tikzcd}
	\end{equation}
	where vertical arrows are in $\E_1$, horizontal arrows are in $\E_2$ and each square is a pullback square. Also one has a natural morphism $p:\dd^*_2\Ca^{\op{cart}}_{\E_1,\E_2} \to \Ca$ induced by composition along the diagonal.\\
    
    Suppose we have a functor $F : \Ca^{\op{op}}\to \op{Cat}_{\infty}$. Precomposing with $p$ gives the functor :
    \begin{equation}
        G: \dd^*_{2,\{2\}}\Ca_{\E_1,\E_2} \to \op{Cat}_{\infty}
    \end{equation}
    Suppose we have the following property that for every $f: Y \to X$ in $\E_2$, $f^*:=F(f)$ admits a right adjoint $f_* : F(Y) \to F(X)$ which satisfies base change property with respect to morphisms in $\E_1$. Then the question arises the following : 
    \begin{question}
        Is it possible to define a functor $G': \dd^*_{2,\{1\}}\Ca^{\op{cart}}_{\E_1,\E_2} \to \op{Cat}_{\infty}$ such that 
        \begin{enumerate}
            \item $G'_{\E_1} : \Ca_{\E_1}^{\op{op}} \to \dd^*_{2,\{1\}}\Ca^{\op{cart}}_{\E_1,\E_2}\to \op{Cat}_{\infty}$ is $F|_{\Ca_{\E_1}}$.
    
    \item     $G'_{\E_2} : \Ca_{\E_2} \to \dd^*_{2,\{1\}}\Ca^{\op{cart}}_{\E_1,\E_2} \to \op{Cat}_{\infty}$ sends $f \mapsto f_*$.
    \end{enumerate}
    \end{question}
    The above questions asks about the existence of $G'$ which encodes both the functors $f^*$ and $f_*$ in a coherent way which is a key feature in formulation of six-functor formalism. The theorem of partial adjoints answeres this question. \\
In order to state the theorem of partial adjoints, we introduce the notion of right and left adjointable squares and its important properties.
\subsection{Adjointable squares.}
	\begin{definition}\label{adjointablesquares}
		Suppose we have a diagram of $\infty$-categories 
		\begin{equation}
			\sigma:=
			\begin{tikzcd}
				\Ca \arrow[r,"G"] \arrow[d,"U"] & \D \arrow[d,"V"] \\
				\Ca' \arrow[r,"G'"] & \D'
			\end{tikzcd}
		\end{equation}
		which commutes upto specified equivalence  
		\[ \alpha: V \circ G \cong G' \circ U. \]
		We say $\sigma$ is \textit{left adjointable} if $G$ and $G'$ admit left adjoints $F$ and $F'$ respectively and if the composite transformation 
		\[ F' \circ V \to F' \circ V \circ G \circ F \cong F' \circ G' \circ U \circ F \to U \circ F \] is an equivalence.
	\end{definition}
	
	\begin{remark}
		Some remarks on the definition above:
		\begin{enumerate}
			\item 	We have the dual notion of \textit{right adjointable squares}.
			
			\item  The notion of left and right adjointable squares in classical category theory is called the \textit{Beck-Chevalley condition}.
		\end{enumerate}
		
	\end{remark}

\begin{definition}
		Let $S$ be a simplicial set, we define subcategories
		
		\[ \op{Fun}^{LAd}(S,\op{Cat}_{\infty}), \op{Fun}^{RAd}(S,\op{Cat}_{\infty}) \subset \op{Fun}(S,\op{Cat}_{\infty}) \] as follows:
		\begin{enumerate}
			\item Let $F \in \op{Fun}(S,\op{Cat}_{\infty})$. Then $F \in \op{Fun}^{LAd}(S,\op{Cat}_{\infty}) (\op{Fun}^{RAd}(S,\op{Cat}_{\infty}))$ if and only if for every edge $s \in s'$ $F(s) \to F(s')$ admits a left (right) adjoint.
			\item Let $\alpha: F \to F'$ be a morphism in $\op{Fun}(S,\op{Cat}_{\infty})$ where $F, F' \in \op{Fun}^{LAd}(S,\op{Cat}_{\infty}) \linebreak(\op{Fun}^{RAd}(S,\op{Cat}_{\infty}))$, then $\alpha$ is a morphism in $\op{Fun}^{LAd}(S,\op{Cat}_{\infty}) (\op{Fun}^{RAd}(S,\op{Cat}_{\infty}))$ if for every $s \in s'$, the diagram 
			\begin{equation}
				\begin{tikzcd}
					F(s) \arrow[r] \arrow[d] & F(s') \arrow[d] \\
					F'(s) \arrow[r] & F'(s')
				\end{tikzcd}
			\end{equation}
			is left(right) adjointable.
		\end{enumerate}
	\end{definition}
	
	We have the following important corollary. 
	
	\begin{corollary}(Corollary 4.7.4.18 of \cite{HTT})\label{adjsquarethm}
		\begin{enumerate}
			\item The $\infty$-categories $\op{Fun}^{LAd}(S,\op{Cat}_{\infty})$ and $\op{Fun}^{RAd}(S,\op{Cat}_{\infty})$ are presentable and in particular, they admit all small limits.
			\item There is a canonical equivalence of $\infty$-categories 
			\[ \op{Fun}^{LAd}(S^{op},\op{Cat}_{\infty}) \cong \op{Fun}^{RAd}(S,\op{Cat}_{\infty}). \]
		\end{enumerate}
	\end{corollary}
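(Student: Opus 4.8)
The statement is Lurie's, so I would follow the straightening-theoretic strategy that underlies it. The first move is to translate both sides into the language of fibrations. Via covariant straightening/unstraightening, $\op{Fun}(S,\op{Cat}_{\infty})$ is equivalent to the $\infty$-category of cocartesian fibrations over $S$, and dually $\op{Fun}(S^{\op{op}},\op{Cat}_{\infty})$ is equivalent to the $\infty$-category of cartesian fibrations over $S$. I would then re-express the conditions defining $\op{Fun}^{RAd}(S,\op{Cat}_{\infty})$ fiberwise, using the relative-adjunction criterion of \cite{HTT}: for a cocartesian fibration $p : X \to S$, the transport functor $F(e)$ associated to an edge $e : s \to s'$ admits a right adjoint precisely when there exists a $p$-cartesian edge lying over $e$ with the same endpoints as a chosen cocartesian edge.

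The technical heart is the bicartesian characterization, which I would isolate as a lemma: $F \in \op{Fun}^{RAd}(S,\op{Cat}_{\infty})$ if and only if the associated cocartesian fibration $p : X \to S$ is \emph{also} a cartesian fibration. The edge-level condition (each $F(e)$ admits a right adjoint) supplies $p$-cartesian lifts over every edge by the criterion above, while the square-level Beck--Chevalley condition (right-adjointability of each square) is exactly what forces these cartesian lifts to compose coherently, so that the collection of cartesian edges is closed under composition and defines a genuine cartesian fibration structure. I would verify this using the pointwise-to-global adjunction criterion of \cite{HTT} (fiberwise detection of adjoints together with the relative-adjoint formalism), checking that adjointability of the squares encodes precisely the coherence needed for composability.

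Granting the lemma, the duality in part (2) is essentially formal. Both $\op{Fun}^{RAd}(S,\op{Cat}_{\infty})$ and $\op{Fun}^{LAd}(S^{\op{op}},\op{Cat}_{\infty})$ are identified with the \emph{same} $\infty$-category of bicartesian fibrations over $S$: the former by reading off the cocartesian-transport functors over $S$, the latter by reading off the cartesian-transport functors, i.e. covariant straightening over $S^{\op{op}}$. The canonical equivalence is then the identity on bicartesian fibrations viewed through the two straightening functors, and the ``pass to adjoints'' assignment is precisely this change of straightening, since the cocartesian- and cartesian-transport functors of a fixed bicartesian fibration are mutually adjoint by construction. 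For presentability in part (1), I would argue that $\op{Fun}(S,\op{Cat}_{\infty})$ is presentable, that $\op{Fun}^{RAd}(S,\op{Cat}_{\infty})$ is closed under small limits (a limit of right-adjointable diagrams is right adjointable) and is accessible (the defining conditions are detected by small diagrams and accessible functors), and conclude presentability from the recognition theorem for presentable subcategories; the argument for $\op{Fun}^{LAd}$ is symmetric.

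The main obstacle I expect is the lemma of the second paragraph: showing that edge-wise adjoints together with the square-level Beck--Chevalley condition globalize to an honest cartesian fibration structure on $X$. Edge-wise existence of cartesian lifts is cheap, but proving that they assemble coherently — that the adjointability of squares is equivalent to closure of cartesian edges under composition, with all higher coherences accounted for — requires the full strength of the relative adjunction machinery and careful bookkeeping at the level of mapping spaces, rather than a mere pointwise check.
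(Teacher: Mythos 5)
The paper offers no proof of this statement: it is imported verbatim as Corollary 4.7.4.18 of Lurie (in fact this is a result of \emph{Higher Algebra}, \S 4.7.4, not of \emph{Higher Topos Theory} --- HTT has no Section 4.7 --- so the citation in the paper is slightly off), and so there is no in-paper argument to compare yours against. Your sketch does reproduce the strategy of Lurie's own proof: both $\op{Fun}^{RAd}(S,\op{Cat}_{\infty})$ and $\op{Fun}^{LAd}(S^{\op{op}},\op{Cat}_{\infty})$ are identified, via the two straightening equivalences, with a common subcategory of $(\op{Cat}_{\infty})_{/S}$ spanned by the bicartesian fibrations, and the equivalence in part (2) is the composite of these identifications. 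Two points in your outline need more care than you give them. First, $\op{Fun}^{RAd}(S,\op{Cat}_{\infty})$ is a subcategory of $\op{Fun}(S,\op{Cat}_{\infty})$ that is \emph{not} full --- its morphisms are constrained to be the adjointable natural transformations --- so your key lemma must also identify which maps of cocartesian fibrations over $S$ correspond to such transformations (namely the functors over $S$ preserving cartesian edges in addition to cocartesian ones); your sketch treats only the object-level statement. Second, for part (1) the recognition principle you invoke for presentable subcategories applies to \emph{full} subcategories closed under limits and filtered colimits, so it does not apply directly to a non-full subcategory; one either argues on the fibration side or shows directly that $\op{Fun}^{RAd}(S,\op{Cat}_{\infty})$ admits small limits preserved by the inclusion, which is the form in which Lurie states and proves the result. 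Neither issue is fatal, and you correctly flag the coherence of cartesian lifts as the technical heart, but both gaps would have to be closed for the sketch to constitute a proof.
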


    \subsection{Statement and proof of partial adjoints.}
We now state the theorem of partial adjoints (see also \cite[Proposition 1.4.4]{liu2017enhanced}). We freely use the notations of multisimplicial sets defined in \cite[Section 3]{chowdhury2024sixfunctorformalismsii}.
\begin{theorem}\label{prtadj}
		Consider quadruples $(I,J,R,F)$ where $J \subset I$ are finite sets, $R$ an $I$-simplicial set and $ f: \dd^*_I R \to \op{Cat}_{\infty}$ a functor satisfying the following conditions:
		\begin{enumerate}
			\item For every $j \in J$ and for every edge $ e \in \ep^I_j(R)$, $F(e)$ has  a right adjoint.
			\item For every $ i \in J^c:=I/J$ and $ j \in J$, every square $\tau \in (\ep^I_{i,j}R)_{1,1}$, the commutative square $F(\tau)$ is right adjointable.
			\end{enumerate}
		Then, there exists a functor $ F_J: \dd^*_{I,J}R \to \op{Cat}_{\infty}$ satisfying the following conditions:
		\begin{enumerate}
			\item $F$ and $F_J$ are the same functors on the sub-simplicial set $\dd^*_{J^c}(\Delta_{i_c})_*R \subset \dd^*_IR,\dd^*_{I,J}R$. Here $ i_c: J^c \subset I$.
			\item For every $j \in J$ and for every edge $e \in \ep^I_j(R)$, $F_J(e)$ is a right adjoint to $F(e)$.
			\item For every $ i \in J^c, j \in J$ any every square $\tau \in (\ep^I_{i,j}R)_{1,1}$, $F_J(\tau)$ is right adjointable square to $F(\tau)$.
		\end{enumerate}
	\end{theorem}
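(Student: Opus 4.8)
The plan is to produce $F_J$ as a lift furnished by \cref{maintechnicalsimplicesthm}, applied with $K = \dd^*_{I,J}R$, target $\op{Cat}_\infty$, and with $K' = \emptyset$ as in the remark following that theorem; all three conclusions will then be read directly off the data used to build the lift. I would organise the argument as an induction on $|J|$, the case $J = \emptyset$ being $F_J = F$, so that it suffices to add a single adjoint direction $j_0$ to an already-constructed partial adjoint. This keeps the combinatorics of the multisimplicial faces manageable, since at each step only the $j_0$-direction is reversed.

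For the single-direction step, the core is the construction of a weakly contractible system $\N \in (\sset)^{(\Delta_{/K})^{op}}$ together with a natural transformation $\alpha : \N \to \op{Map}[K,\op{Cat}_\infty]$. For a simplex $(m,\sigma)$ of $K$ I let $\N(m,\sigma)$ be the space of \emph{adjunction data} along $\sigma$: a functor $\Delta^m \to \op{Cat}_\infty$ together with, for each $j_0$-directional edge of $\sigma$, a unit/counit exhibiting its value as a right adjoint of the corresponding value of $F$, and, for each square of $\sigma$ mixing the $j_0$-direction with a kept direction $i$, an equivalence witnessing right adjointability against $F(\tau)$. The map $\alpha$ forgets all adjunction data and remembers the underlying functor, and $\N$ is functorial in $(\Delta_{/K})^{op}$ via restriction along faces. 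Conclusion (1) is built in by arranging that over $\dd^*_{J^c}(\Delta_{i_c})_*R$, where the $j_0$-coordinates are degenerate, the only adjunction datum is the trivial one and $\alpha$ takes the value $F(\sigma)$; any lift then agrees with $F$ there. Conclusions (2) and (3) hold for the resulting $F_J$ because the right-adjoint and right-adjointability witnesses are exactly what $\N$, and hence the chosen section, records.

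Weak contractibility of each $\N(m,\sigma)$ is where hypotheses (1) and (2) are used and is the crux. Nonemptiness is precisely the two hypotheses: (1) provides the pointwise right adjoints along $j_0$-edges, and (2) makes the base-change comparisons in the mixed $(i,j_0)$-squares equivalences, so the candidate adjoint grid assembles into an actual functor on $\sigma$. For contractibility I would use that the space of right adjoints to a fixed functor is contractible when nonempty, together with the fact that, once the underlying adjoints are fixed, the unit/counit and base-change witnesses also range over contractible spaces; writing $\N(m,\sigma)$ as an iterated limit of such contractible spaces over the faces of $\sigma$ and appealing to the presentability and limit statement of \cref{adjsquarethm} yields weak contractibility. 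Feeding $(\N,\alpha)$ into \cref{maintechnicalsimplicesthm} then produces the desired $f = F_J : K \to \op{Cat}_\infty$.

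The main obstacle I anticipate is twofold. First, the uniform and functorial packaging of $\N$ over the entire category of simplices, making ``take the right adjoint only in the $j_0$-direction'' coherent across all faces at once, is delicate bookkeeping rather than a single clean equivalence. Second --- and this is the genuinely technical point of the induction --- one must check that after passing to the right adjoint in the $j_0$-direction, the hypotheses of the theorem reproduce themselves for the remaining directions of $J$: concretely, that right-adjointability of a square transverse to two adjoint directions is preserved when one of the two adjoints has been taken. This is the Beck--Chevalley mate calculus for commuting a right adjoint past an adjointable square, and carrying it out at the $\infty$-categorical level, compatibly with the cartesian/base-change constraints encoded in $R$, is what makes the inductive step nontrivial.
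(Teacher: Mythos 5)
Your overall strategy --- feeding a weakly contractible system $\N$ and a natural transformation $\alpha$ into \cref{maintechnicalsimplicesthm} with $K'=\emptyset$ and $K=\dd^*_{I,J}R$ --- is exactly the paper's, but the induction on $|J|$ that you use to reduce to a single adjoint direction has a genuine gap. At the first inductive step you must produce a functor $F_{\{j_0\}}$ on $\dd^*_{I,\{j_0\}}R$, and this simplicial set contains squares mixing the direction $j_0$ with another direction $j_1\in J$. For $F_{\{j_0\}}$ to be defined on such a square, the original square $F(\tau)$ (with $\tau\in(\ep^I_{j_0,j_1}R)_{1,1}$) must be right adjointable in the $j_0$-direction, and for the next inductive step to apply you would further need the resulting square to be right adjointable in the $j_1$-direction. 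Neither is among the hypotheses: condition (2) of the theorem only concerns squares mixing a $J$-direction with a $J^c$-direction, and right adjointability of a square in one direction does \emph{not} follow from the mere existence of right adjoints in both directions (the standard failure of base change, e.g.\ a functor with a right adjoint that does not commute with limits against a constant-diagram/limit adjunction, gives a commutative square of right-adjointable functors whose mate is not an equivalence). So the Beck--Chevalley ``mate calculus'' you defer to at the end is not a technical verification but a false step: the hypotheses do not reproduce themselves after one direction is adjointed.

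The paper sidesteps this entirely by adjointing all of $J$ simultaneously. For each simplex $\tau$ it rewrites $F_\tau$ as a map $\prod_{i\in J^c}\Delta^n \to \op{Fun}^{\op{RAd}}\bigl((\prod_{j\in J}\Delta^n)^{\op{op}},\op{Cat}_\infty\bigr)$ --- conditions (1) and (2) are precisely what is needed to land in this subcategory --- and then applies the equivalence $\op{Fun}^{\op{RAd}}(S,\op{Cat}_\infty)\cong\op{Fun}^{\op{LAd}}(S^{\op{op}},\op{Cat}_\infty)$ of \cref{adjsquarethm}. Under that equivalence a square internal to two $J$-directions has \emph{both} edges replaced by their right adjoints at once, and commutativity of the adjoint square is then automatic (right adjoints of equivalent composites are equivalent); no mate computation and no hypothesis on $J$--$J$ squares is ever needed. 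A secondary difference: the paper's $\N(\tau)$ is simply the under-category $(N(\Delta_{/\dd^*_{I,J}R}))_{\tau/}$, weakly contractible because it has an initial object, with all of the adjoint-taking packaged into $\alpha$ via the corollary above; your proposed $\N$ as a moduli of unit/counit and adjointability data could in principle work but makes the contractibility verification substantially harder, and as written it is only sketched. If you abandon the induction and adopt the simultaneous $\op{Fun}^{\op{RAd}}/\op{Fun}^{\op{LAd}}$ device, the rest of your outline goes through.
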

    \begin{remark}
    Let $R$ be an $I$-simplicial set. We briefly recall the meaning of the conditions in the theorem:
        \begin{enumerate}
            \item Let $i \in I$. For a $1$-simplex in $\ep^I_iR$, this corresponds to \[\Delta^{0,\cdots,1,\cdots, 0} \to R\] where $1$ is in the $i$-th position. Applying $\dd^*_I$ functor yields a morphism :
            \begin{equation}
                \Delta^1 \to \dd^*_IR
            \end{equation}
\item Let $i,i' \in I$, A $(1,1)$-simplex of $\ep^I_{i,i'}R$ is a morphism of the form \[\Delta^{0,\cdots,1,\cdots,1,\cdots 0} \to R \]   where   $1$ is in the $i$ and $i'$th position. Applying $\dd^*_I$ yields a morphism of the form
\begin{equation}
    \Delta^1 \times \Delta^1 \to \dd^*_iR
\end{equation}
which is a commutative square in this simplicial set. 
\end{enumerate}
    \end{remark}

\begin{proof}
    The idea is to use the technical theorem of constructing functors using the category of simplicies. We verify the conditions and setup of \cref{maintechnicalsimplicesthm} as follows: 
    \begin{enumerate}
    \item \textbf{Defining $\N(-)$:}
    Let $ \tau : \Delta^n \to \dd^*_{I,J}R$. This corresponds to a morphiasm of the form :
    \begin{equation}
        \Delta^{\underline{n}} \to \op{op}^I_JR.
    \end{equation}
    Let $\N(\tau):= (N(\Delta_{/\dd^*_{I,J}R}))_{\tau/}$. As this has an initial object this is weakly contractible. As the category of simplicies is functorial, we have a functor :
    \begin{equation}
        \N: (\Delta_{/\dd^*_{I,J}R})^{op} \to \sset 
    \end{equation}
\item \textbf{Defining $\alpha:$} Unravelling the definition of $\op{op}^I_J$ and applying the functor $\dd^*_I$, we get that $\tau$ yields a morphism of the form 
\begin{equation}
    (\prod _{j \in J} \Delta^n)^{\op{op}} \times (\prod_{i in \in J^c} \Delta^n) \to \dd^*_IR
 \end{equation}
 Applying $F$, we get a morphism
 \begin{equation}
     F_{\tau} :     (\prod _{j \in J} \Delta^n)^{\op{op}} \times (\prod_{i  \in J^c} \Delta^n) \to \op{Cat}_{\infty}
 \end{equation}
 which using the Hom and product adjunction, can be realized as a morphism :
 \begin{equation}
     F_{\tau} :  (\prod_{i \in J^c} \Delta^n) \to \op{Fun}(    (\prod _{j \in J} \Delta^n)^{\op{op}},\op{Cat}_{\infty}).
 \end{equation}
 The conditions of the theorem are equivalent to the following : 
 \begin{enumerate}
     \item For a fixed $j \in J$, we fix an edge $e: \Delta^1 \to \Delta^n $ and for every $i \in I/\{j\}$, we fix vertices $x_i \in \Delta^n$, then the morphism 
     \begin{equation}
         (\prod_{i \in I/\{j\}}\Delta^0) \times \Delta^1 \cong\Delta^1 \xrightarrow{(e.\{x_i\})}      (\prod _{j \in J} \Delta^n)^{\op{op}} \times (\prod_{i \in J^c} \Delta^n) \xrightarrow{F_{\tau}} \op{Cat}_{\infty}
         \end{equation}
         admits a right adjoint
         \item For fixed $j \in J, i \in J^c$, we fixed two edges $e_1,e_2:\Delta^1 \to \Delta^n$, for $ k \in I/\{i,j\}$, we fix vertices $x_k$, then the commutative square given by the following morphism :
         \begin{equation}
          (\prod_{k \in I/\{i,j\}} \Delta^0) \times  \Delta^1 \times \Delta^1 \xrightarrow{(e_1,e_2,\{x_k\})}     (\prod _{j \in J} \Delta^n)^{\op{op}} \times (\prod_{i \in J^c} \Delta^n) \xrightarrow{F_{\tau}} \op{Cat}_{\infty}. 
         \end{equation}
         is right adjointable.
 \end{enumerate}
 The above conditions imply $F_{\tau}$ can be realized as a map :
 \begin{equation}
   F_{\tau}: (\prod_{i \in J^c} \Delta^n) \to \op{Fun}^{\op{RAd}}((\prod _{j \in J} \Delta^n)^{\op{op}},\op{Cat}_{\infty}).
 \end{equation}
 By \cref{adjsquarethm}, we get a morphism :
 \begin{equation}
     F_{\tau}^R : (\prod_{i \in J^c} \Delta^n) \to \op{Fun}^{\op{LAd}}((\prod _{j \in J} \Delta^n),\op{Cat}_{\infty}).
 \end{equation} 
 Using the Hom-product adjunction and applying the diagonal functor $\Delta^n \to \prod_{i\in I} \Delta^n$, we get the morphism :
 \begin{equation}
     \alpha'_{\tau} : \Delta^n \to (\prod_{i \in I} \Delta^n) \xrightarrow{F_{\tau}^R} \op{Cat}_{\infty} 
     \end{equation}
     In particular, for any $f:(n,\tau) \to (m,\tau')$ in $\Delta_{/\dd^*_{I,J}R}$, we have the following commutative diagram
     \begin{equation}\label{alphaprimefunctoriality}
         \begin{tikzcd}
             \Delta^n \arrow[r, "\alpha'_{\tau}"] \arrow[d,"f"] & \op{Cat}_{\infty} \\
             \Delta^m \arrow[ur, "\alpha'_{\tau'}"] & {}
             \end{tikzcd}
     \end{equation}
     In other words $\alpha'_{\tau'} \circ f \cong \alpha'_{\tau}$. Hence we can define the morphism $\alpha$ as follows :
     \begin{equation}
         \alpha(n,\tau) : \N(\tau) \to \op{Map}[\Delta^n,\op{Cat}_{\infty}] \quad;\quad (d:\tau \to \tau') \mapsto d \circ \alpha'(\tau')
     \end{equation}
     Notice that the target lies in $\op{Fun}^{\simeq}(\Delta^n,\op{Cat}_{\infty})$ beacuseof \cref{alphaprimefunctoriality}. 
 \end{enumerate}

 Thus by \cref{maintechnicalsimplicesthm} applied to $K'=\phi, K= \dd^*_{I,J}R$, we get a morphism 
 \begin{equation}
     F_J : \dd^*_{I,J}(R) \to \op{Cat}_{\infty}
     \end{equation}
which satisfied the conditons as required by the theroem.
\end{proof}

\begin{remark}
 As pointed out in \cite[Remark 9.4.11]{Robalothesis}, the above theorem of partial adjoints can be visualized as a property in terms of Gray-tensor product of scaled simplicial sets. The above simplicial sets appearing in the theorem as diagonals of $k$-simplicial sets are not $(\infty,1)$-categories but they possibly may admit a $(\infty,2)$-categorical structure. The details are not clear to us at the moment unfortunately but we suspect that above theorem has a generalization in the language of $(\infty,2)$-categories which may provide flexibility in dealing properties of six-functor formalisms.
    
\end{remark}
    \section{The $\infty$-category of correspondences.}
In this section, we recall the definitions and prove relevant statements concerning the $(\infty,1)$-category of correspondences. The category of correspondences serves as a source for our abstract six-functor formalisms in geometric setups. We also show that the $\infty$-category of correspondences admit a symmetric monoidal structure. Lastly, we also provide an alternate way of defining correspondences using the language of bisimplicial sets and describe the relation between this category and the simplicial sets $\dd^*_2\Ca^{\op{cart}}_{\E,\op{all}}$ that we consider in the context of $|\infty$-categorical compactifications.
    \subsection{Definitions and properties.}
    \begin{definition}\cite[Definition A.5.1]{padic6functorlucasmann}
        A \textit{geometric setup} is a pair $(\Ca,\E)$ where $\Ca$ is an $\infty$-category and $E$ is a homotopy class of edges in $\Ca$ satisfying
        \begin{enumerate}
            \item $E$ contains all isomorphism and is stable under compositions,

           \item Pullbacks of $E$ exist and remain in $E$.
        \end{enumerate}
    \end{definition}
    \begin{definition}\cite[Definition A.5.2]{padic6functorlucasmann}
        \begin{enumerate}
            \item Let $C(\Delta^n) \subset \Delta^n \times (\Delta^n)^{op}$ be the full-subcategory spanned by $([i],[j])$ with $i \le j$.
            \item An edge in $C(\Delta^n)$ is \textit{vertical} (resp. horizontal) if the projection to the second (first) factor is degenerate.
            \item A square in $C(\Delta^n)$ is \textit{exact} if it is both a pullback and a pushout square.
            \item For any simplicial set $K$, we define 
            \begin{equation*}
                C(K): = \op{colim}_{(n,\sigma) \in \Delta_{/K}}C(\Delta^n).
            \end{equation*}
            Thus $C$ can be visualized as an endofunctor on the category of simplicial sets. It admits a right adjoint 
            \begin{equation*}
                B: \sset \to \sset
            \end{equation*}
            defined by 
            \begin{equation*}
                K \mapsto B(K) =\{B(K)_n:= \op{Hom}_{\sset}(C(\Delta^n),K) \}.
            \end{equation*}
            \item Given a geometric setup $(\Ca,\E)$, define 
            \begin{equation*}
                \op{Corr}(\Ca)_{E,\op{all}} \subset B(\Ca)
            \end{equation*}
            to be the sub-simplicial set whose $n$ simplices are maps $C(\Delta^n) \to \Ca$ which sends 
            \begin{enumerate}
                \item vertical edges to $E$,
                \item exact squares to pullback squares.
            \end{enumerate}
        \end{enumerate}
    \end{definition}
    \begin{remark}
     For every $n \ge 1$, we have maps \begin{equation}
       \gamma_n : (\Delta^n)^{op} \to C(\Delta^n)  \quad ,\quad \gamma_n' : \Delta^n \to C(\Delta^n)
     \end{equation}
     given by the inclusion of the top row and the right most columns. These map induces maps :
     \begin{equation}\label{Cboundarymaps}
         \gamma_n : (\Lambda^n_0)^{op} \to C(\partial\Delta^n) \quad;\quad \gamma_n' : \Lambda^n_0 \to C(\Delta^n)
     \end{equation}
    \end{remark}
    \begin{remark}
Let $(\Ca,\E)$ be a geometric setup. Then the lower simplices of $\op{Corr}(\Ca)_{\E,\op{all}}$ look like as follows:
     \begin{itemize}
         \item The $0$-simplices are objects of $\Ca$.
         \item A $1$-simplex i.e an edge between $X_0$ and $X_1$ where $X_0,X_1 \in \Ca$ is a diagram of the form :
         \begin{equation*}
             \begin{tikzcd}
                 X_0 & X_{01}\arrow[l] \arrow[d,"f"] \\
                 {} & X_1
             \end{tikzcd}
         \end{equation*}
         where $f \in E$.
         \item A $2$-simplex in $\op{Corr}(\Ca)_{\E,\op{all}}$ looks like as follows:
         \begin{equation*}
             \begin{tikzcd}
                 X_0 & X_{01} \arrow[l] \arrow[d,"f"]  \arrow[dr, phantom, "\square"]  & X_{02} \arrow[l] \arrow[d,"g"] \\
                 {} & X_{11}  & X_{12} \arrow[d,"h"] \arrow[l] \\
                 {} & {} & X_{22}
             \end{tikzcd}
         \end{equation*}
         where $f,g,h\in E$ and $\square$ is a pullback square.
     \end{itemize}
     Notice that edges in $Corr(\Ca)_{\E,\op{all}}$ can be composed. In particular if we have two $1$ -simplices 
     \begin{equation*}
         \begin{tikzcd}
             X & X' \arrow[l] \arrow[d,"f"] \\
             {} & Y
         \end{tikzcd}
         ~~~~~~\text{and}~~~~~~
         \begin{tikzcd}
             Y & Y'\arrow[l] \arrow[d,"h"] \\
             {} & Z,
         \end{tikzcd}
     \end{equation*}
     we can compose these two $1$-simplices to form a $1$-simplex which is the outer roof of the following $2$-simplex
     \begin{equation*}
         \begin{tikzcd}
             X & X'\arrow[l] \arrow[d,"f"] & X' \times_{Y} Y' \arrow[l] \arrow[d,"g"] \\
             {} & Y & Y'\arrow[l] \arrow[d,"h"] \\
             {} & {} & Z.
         \end{tikzcd}
     \end{equation*}
     This is possible because we can take pullback of edges in $\E$ as $(\Ca,\E)$ is a geometric setup. 
    \end{remark}
    The above remark shows  that $\op{Corr}(\Ca)_{\E,\op{all}}$ admits lift along inclusions $\Lambda^2_1 \to \Delta^2$. This leads us to the following claim that $\op{Corr}(\Ca)_{\E,\op{all}}$ is an $\infty$-category:
    \begin{proposition}
        Let $(\Ca,\E)$ be a geometric setup. Then the simplicial set $\op{Corr}(\Ca)_{\E,\op{all}}$ is an $\infty$-category.
    \end{proposition}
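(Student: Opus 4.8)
The plan is to prove the only property at issue, namely that $\op{Corr}(\Ca)_{\E,\op{all}}$ has fillers for all inner horns: for $0<r<n$, every map $\Lambda^n_r \to \op{Corr}(\Ca)_{\E,\op{all}}$ extends along the inclusion $\Lambda^n_r \hookrightarrow \Delta^n$. The decisive low-dimensional case $n=2$, $r=1$ is exactly the content of the preceding remark, where the filler is produced by forming the pullback $X'\times_Y Y'$ of two composable roofs; this is precisely where stability of $\E$ under pullback is used. The general case is best organized through the adjunction $C \dashv B$.

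First I would translate the horn-filling problem across the adjunction. Since $\op{Corr}(\Ca)_{\E,\op{all}}\subset B(\Ca)$ and $B$ is right adjoint to the colimit-preserving functor $C$, a map $\Lambda^n_r \to B(\Ca)$ is the same datum as a map $\phi : C(\Lambda^n_r)\to \Ca$, where $C(\Lambda^n_r)$ is the union inside $C(\Delta^n)$ of the full subposets $C(d_k\Delta^n)$ for $k\neq r$. Landing in the sub-simplicial set $\op{Corr}(\Ca)_{\E,\op{all}}$ means that $\phi$ sends those vertical edges and exact squares of $C(\Delta^n)$ that already lie in $C(\Lambda^n_r)$ to $\E$-edges and pullback squares respectively. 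Thus the proposition reduces to the extension problem: given such a $\phi$, produce $\tilde\phi : C(\Delta^n)\to \Ca$ which still sends vertical edges into $\E$ and exact squares to pullbacks. I would then describe $C(\Lambda^n_r)\hookrightarrow C(\Delta^n)$ combinatorially. A nondegenerate simplex of $C(\Delta^n)$ is a chain of pairs $(a,b)$ with $a\le b$, and it lies in $C(\Lambda^n_r)$ exactly when some index $k\neq r$ is missing among all its entries; hence the simplices to be attached are precisely the chains whose entries exhaust $\{0,\dots,n\}\setminus\{r\}$. In particular, for $n=2$ the one missing object is $(0,2)$, realized as the pullback, whereas for $n\ge 3$ \emph{every} object of $C(\Delta^n)$ already lies in $C(\Lambda^n_r)$, so no new objects need to be created and only higher cells must be supplied.

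The extension itself I would build by attaching the missing simplices in order of increasing dimension, supplying two kinds of data: inner-horn fillers, which exist because $\Ca$ is an $\infty$-category, and the values on new exact squares, which must be pullbacks. For $n=2$ the missing pullback exists because the relevant cospan consists of $\E$-edges and $(\Ca,\E)$ is a geometric setup; for $n\ge 3$ the missing squares have all corners already present, and one checks they are forced to be pullbacks by pasting the pullback squares already constrained on the faces lying in the horn (using stability of $\E$ under pullback and the pasting/cancellation properties of cartesian squares). To assemble these cell-by-cell choices into a single coherent map I would invoke \cref{maintechnicalsimplicesthm} with $K'=C(\Lambda^n_r)$, $K=C(\Delta^n)$, $i$ the inclusion and $f'=\phi$, feeding in the weakly contractible over-categories of the category of simplices together with the natural transformation recording the adjointness and pullback choices.

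The step I expect to be the main obstacle is reconciling the exactness condition with the horn filling. Unlike the case of an arbitrary $\infty$-category, the inclusion $C(\Lambda^n_r)\hookrightarrow C(\Delta^n)$ is \emph{not} inner anodyne, and indeed $B(\Ca)$ is not an $\infty$-category for general $\Ca$ precisely because pullbacks need not exist; so the pullback structure is essential rather than decorative. Concretely, one must verify that each square declared exact in $C(\Delta^n)$ but absent from $C(\Lambda^n_r)$ is genuinely determined to be cartesian by data already constrained on the horn, so that choosing it to be a pullback is simultaneously possible (the cospan consists of $\E$-edges, whose pullback exists and remains in $\E$) and consistent with the inner-horn fillers chosen at the same time. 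Making this compatibility precise — that the inclusion is inner anodyne \emph{relative to the pullback-square condition} — is the delicate point on which the argument turns.
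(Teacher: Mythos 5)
Your combinatorial analysis of $C(\Lambda^n_r)\hookrightarrow C(\Delta^n)$ is correct (for $n=2$, $r=1$ the single missing object $(0,2)$ is supplied by a pullback; for $n\ge 3$ no new objects, only higher cells), but the proof has a genuine gap exactly at the point you yourself flag. For $n\ge 3$ you assert that each exact square of $C(\Delta^n)$ absent from $C(\Lambda^n_r)$ is ``forced to be cartesian by pasting the pullback squares already constrained on the faces'' and that the cell-by-cell choices of inner-horn fillers in $\Ca$ and of pullback squares can be made mutually coherent --- and then you stop. This is not a routine verification: the inclusion is not inner anodyne, the exactness condition interacts with every choice of filler, and the case analysis of which exact squares are missing (they can be missing for $n=3,4$ even though all their corners are present) is precisely the hard content of the statement. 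The appeal to \cref{maintechnicalsimplicesthm} is likewise only schematic: that theorem extends maps out of simplicial sets $K'\to K$ and requires you to exhibit a functor $\N$ on the category of simplices of $K$, prove its weak contractibility, and construct the natural transformation $\alpha$ recording compatibility with $\phi$; none of this apparatus is set up for $K'=C(\Lambda^n_r)$, $K=C(\Delta^n)$. As it stands the proposal is a plan whose decisive step is acknowledged but not executed.

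For comparison, the paper sidesteps all higher horns. By Joyal's criterion (\cite[Corollary 2.3.2.2]{HTT}) it suffices to show that the restriction $\op{Fun}(\Delta^2,\op{Corr}(\Ca)_{\E,\op{all}})\to\op{Fun}(\Lambda^2_1,\op{Corr}(\Ca)_{\E,\op{all}})$ is a trivial fibration. Observing $C(\Delta^2)=C(\Lambda^2_1)^{\triangleleft}$, one identifies the source with the functors $C(\Delta^2)\to\Ca$ that are right Kan extended from $C(\Lambda^2_1)$ (this is exactly the exact-square-to-pullback condition) and the target with the functors admitting such an extension (guaranteed because the relevant cospan has its legs in $\E$ and $(\Ca,\E)$ is a geometric setup), and then applies the dual of \cite[Proposition 4.3.2.15]{HTT}. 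This packages the uniqueness-up-to-contractible-choice of pullbacks once and for all --- precisely what your cell-attachment argument would have to reprove by hand for every $n$. To complete your write-up you should either carry out the missing pasting and coherence argument in full (as in Liu--Zheng), or switch to the Joyal-criterion reduction.
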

    \begin{proof}
        By \cite[Corollary 2.3.2.2]{HTT}, it is enough to show that the map 
        \begin{equation*}
            \op{Fun}(\Delta^2,\op{Corr}(\Ca)_{\E,\op{all}}) \to \op{Fun} (\Lambda^2_1,\op{Corr}(\Ca)_{\E,\op{all}})
        \end{equation*}
        is a trivial fibration of simplicial sets.  Let $\K': = \op{Fun}(\Delta^2,\op{Corr}(\Ca)_{\E,\op{all}})$ and $\K:=\op{Fun} (\Lambda^2_1,\op{Corr}(\Ca)_{\E,\op{all}})$. We have obvious inclusions $\K' \subset \op{Fun}(C(\Delta^2),\Ca)$ and $\K \subset \op{Fun}(C(\Lambda^2_1),\Ca)$.  By definition of the functor $C$, we see that 
        \begin{equation*}
            C(\Delta^2) =C(\Lambda^2_1)^{\triangleleft}.
        \end{equation*}
        Now we notice the following observations :
        \begin{itemize}
            \item  The simplicial set $\K$ is the full subcategory of $\op{Fun}(C(\Lambda^2_1),\Ca)$ spanned by functors $F_0: C(\Lambda^2_1) \to \Ca$ which admit a limit i.e. $F_0$ can be extended to $C(\Delta^2)=C(\Lambda^2_1)^{\triangleleft} \to \Ca$. This is because as the vertical edges are spanned in $E$ and $(\Ca,\E)$ is a geometric setup.
            \item The simplicial set $\K'$ is the full subcategory of $\op{Fun}(C(\Delta^2),\Ca)$ spanned by functors $F: C(\Delta^2) \to \Ca$ which are right Kan extended from $F|_{C(\Lambda^2_1)}$. This is also true because $(\Ca,\E)$ is a geometric setup and exact squares get mapped to pullback squares in the definition of $\op{Corr}(\Ca)_{\E,\op{all}}$.
        \end{itemize}
   
    Thus one can apply the dual version of \cite[Proposition 4.3.2.15]{HTT} to $\K,\K'$ where $\Ca_0= C(\Lambda^2_1),\Ca = C(\Delta^2), \D = \Ca$ and $\D'=\Delta^0$. Thus we get that the map $\K' \to \K$ is a trivial fibration which completes our proof.
 
    \end{proof}

    Before upgrading the notion of correspondences in operadic version, we introduce some new notations on how the Correspondences behave with functor categories. This shall play a key role in extending six-functor formalisms from one geometric setup to another.
\begin{notation}
\begin{itemize}

    \item     Let $(\Ca,\E)$ be a geometric setup and $K$ be a simplicial set.  Then the functor category $(\op{Fun}(K,\Ca),\tilde{\E})$ is a geometric setup where $\tilde{\E}$ is set of edges $\Delta^1 \times K \to \Ca$ such that for all $k \in K$, the edge $\Delta^1 \times\{k\} \to \Ca$ is in $E$. For any set of edges $P$ stable under pullbacks and compositions in $\Ca$, let $\op{Fun}^P(K,\Ca)$ is full subcategory spanned by functors $f: K \to \Ca$ such that $f: K \to \Ca_P \to \Ca$.

\item Let $\op{Corr}^{\op{all-cart}}(\op{Fun}^{\E}(K,\Ca))_{\tilde{\E},\op{all}}$ be the full subcategory spanned by edges 
\begin{equation}\label{Corrfunctorialmap}
    \begin{tikzcd}
        f &  \arrow[l,"\sigma"] f' \arrow[d,"\tau"] \\
        {} & g
    \end{tikzcd}
\end{equation}
where $\tau \in \tilde{E}$ and for all $\gamma: \Delta^1 \to K$, the square $\sigma_{\gamma} : \Delta^1 \times \Delta^1 \to \Ca$ is cartesian. 
\item Using the notations from above, we have an canonical functor \[ \alpha_{K,\Ca,\E} : \op{Corr}^{\op{all-cart}}(\op{Fun}^{\E}(K,\Ca))_{\tilde{\E},\op{all}} \to \op{Fun}(K,\op{Corr}(\Ca)_{\E,\op{all}}) \]
As $C$ is a colimit preserving functor, it is enough to define the map for $K=\Delta^m$ where $m \ge 0$.
On the level of $n$-simplices, the map is defined as follows:

Let $\sigma_n$ be an $n$-simplex of $\op{Corr}^{\op{all-cart}}(\op{Fun}^{\E}(\Delta^m,\Ca))_{\tilde{\E},\op{all}}$, which is a map $C(\Delta^n) \times \Delta^m \to \Ca$. 
To obtain an $n$-simplex of RHS, we need to define a map $C(\Delta^n) \times C(\Delta^m) \to \Ca$. The map is defined by the following composition 
\begin{equation*}
    C(\Delta^n) \times C(\Delta^m) \to C(\Delta^n) \times \Delta^m \xrightarrow{\sigma_n}\Ca
\end{equation*}
the conditions of the the simplex $\sigma_n$ ensures that the morphism $C(\Delta^n) \times C(\Delta^m) \to \Ca$ can be upgraded to the a morphism $\Delta^n \times \Delta^m \to \op{Corr}(\Ca)$.

\item There is dual map 
   \begin{equation}\label{Corrfunctorialmapdual}
       \alpha'_{K,\Ca,\E} : \op{Corr}^{\op{\E-cart}}(\op{Fun}(K,\Ca))_{\tilde{E},\op{all}} \to \op{Fun}(K^{\op{op}},\op{Corr}(\Ca)_{\E,\op{all}}) .
   \end{equation}
\end{itemize}
\end{notation}
\begin{remark}
    The $\infty$-category $\op{Corr}(\Ca)_{\E,\op{all}}$ admits the following canonical maps :
    \begin{equation*}
        \Ca_{\E} \to \op{Corr}(\Ca)_{\E,\op{all}} \quad,\quad \Ca^{op} \to \op{Corr}(\Ca)_{\E,\op{all}}
    \end{equation*}
\end{remark}

\begin{proposition}\label{preservingcoproductsCor}
    Let $(\Ca,\E)$ be a geometric setup where $\Ca$ admits finite products. Then the map :
    \begin{equation*}
        \Ca^{op} \to \CrrCpEal
    \end{equation*}
    preserves finite coproducts.
\end{proposition}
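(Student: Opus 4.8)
The plan is to verify directly that $\pi_{\op{all}} \colon \Ca^{\op{op}} \to \CrrCpEal$ carries the finite coproducts of $\Ca^{\op{op}}$ to coproducts in the $\infty$-category of correspondences. A finite coproduct is generated by the initial object and binary coproducts, so it suffices to treat these two cases; and since coproducts in $\Ca^{\op{op}}$ are computed as products in $\Ca$ (which exist by hypothesis), the empty coproduct is the terminal object $\ast \in \Ca$ and the binary coproduct of $X,Y$ is the product $X \times Y$. The candidate cocone is supplied by $\pi_{\op{all}}$ itself: the product projections $\op{pr}_X \colon X \times Y \to X$ and $\op{pr}_Y \colon X \times Y \to Y$ are morphisms $X \to X \times Y$ and $Y \to X \times Y$ in $\Ca^{\op{op}}$, so $\pi_{\op{all}}$ sends them to the spans
\[ X \xleftarrow{\op{pr}_X} X\times Y \xrightarrow{\op{id}} X\times Y, \qquad Y \xleftarrow{\op{pr}_Y} X\times Y \xrightarrow{\op{id}} X \times Y \]
which will play the role of the coproduct inclusions.

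For the binary case I would check the universal property on mapping spaces, writing $\op{Map}(A,B)$ for the mapping space in $\CrrCpEal$. Recall that a morphism $A \to B$ is a span $A \leftarrow Z \xrightarrow{f} B$ with $f \in \E$, and that composition is computed by pullback, which is legitimate precisely because $\E$ is stable under base change in a geometric setup. Precomposing a span $X\times Y \xleftarrow{a} Z \xrightarrow{f} W$ (with $f \in \E$) with the inclusion spans above amounts to pulling back along an identity, and therefore returns the spans $X \xleftarrow{\op{pr}_X\circ a} Z \xrightarrow{f} W$ and $Y \xleftarrow{\op{pr}_Y\circ a} Z \xrightarrow{f} W$. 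Thus the comparison map
\[ \op{Map}(X\times Y, W) \longrightarrow \op{Map}(X,W) \times \op{Map}(Y,W) \]
is, at the level of objects, the assignment sending $a$ to the pair $(\op{pr}_X\circ a, \op{pr}_Y\circ a)$ equipped with the shared right leg $f$; the universal property of the product $X \times Y$ in $\Ca$ identifies a map $a \colon Z \to X\times Y$ with the pair $(\op{pr}_X\circ a, \op{pr}_Y \circ a)$, and stability of $\E$ under composition and base change guarantees that the reassembled data again defines a morphism of $\CrrCpEal$. Upgrading this identification of spans to an equivalence of the full mapping spaces is most cleanly carried out through the bisimplicial-set model of the category of correspondences developed in Section 4, using that the functor $C$ preserves colimits, so that it is enough to test the statement on the representables $\Delta^m$.

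The remaining point is the empty coproduct: I would show that $\pi_{\op{all}}(\ast) = \ast$ is initial in $\CrrCpEal$, i.e. that for every $W$ the space of spans $\ast \leftarrow Z \xrightarrow{f} W$ with $f \in \E$ is contractible. Since $\ast$ is terminal in $\Ca$ the left leg carries no information, so this reduces to understanding the $\E$-structured slice over $W$. I expect this nullary case, together with the coherence required to promote the pointwise identification of the binary case to a genuine equivalence of mapping spaces, to be the main obstacle: both are exactly the places where the geometric-setup axioms on $\E$ (containment of isomorphisms and stability under composition and base change) must be used in an essential way, and where the combinatorial control provided by the bisimplicial model and the category of simplices becomes necessary rather than cosmetic.
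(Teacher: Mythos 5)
Your overall strategy (test the coproduct universal property on mapping spaces) is reasonable in principle, but the key step fails, and it fails at exactly the point you pass over with the phrase ``equipped with the shared right leg $f$''. The comparison map $\op{Map}(X\times Y,W)\to\op{Map}(X,W)\times\op{Map}(Y,W)$ sends a span $X\times Y\xleftarrow{a}Z\xrightarrow{f}W$ to the pair of spans $(X\xleftarrow{\op{pr}_X\circ a}Z\xrightarrow{f}W,\ Y\xleftarrow{\op{pr}_Y\circ a}Z\xrightarrow{f}W)$, so every point in its image has a \emph{common} apex $Z$ and a \emph{common} right leg $f$. A general point of the target is a pair of spans $X\leftarrow Z_1\xrightarrow{f_1}W$ and $Y\leftarrow Z_2\xrightarrow{f_2}W$ with unrelated apexes, and the universal property of the product in $\Ca$ --- which reassembles a pair of maps out of one and the same object $Z$ into a single map $Z\to X\times Y$ --- gives no way to manufacture a single span from such a pair. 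So the obstruction is essential surjectivity, not the ``coherence'' you flag as the expected difficulty. Concretely, take $\Ca=\op{Set}$ with $\E$ all maps and $X=Y=W=\ast$: the comparison map is the diagonal of the groupoid of sets, already non-surjective on $\pi_0$ (nothing hits the pair $(\ast\leftarrow\emptyset\to\ast,\ \ast\leftarrow\ast\to\ast)$). Your argument cannot be completed as written, and you should treat the nullary case with the same suspicion: $\op{Map}(\ast,W)$ is the space of spans $\ast\leftarrow Z\to W$ with $Z\to W$ in $\E$, which is not contractible in general.

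For comparison, the paper argues by a different route: it does not use mapping spaces but tries to extend every diagram $(\Delta^0\sqcup\Delta^0)\star\partial\Delta^{n-1}\to\CrrCpEal$ restricting to the candidate cocone over $(\Delta^0\sqcup\Delta^0)\star\Delta^{n-1}$, by unwinding the $C(\Delta^n)$ combinatorics and using that the first row of the extended grid is a limit cone on $X\times Y$. That proof meets the same obstruction at its ``amalgamation'' step: the two given spans with apexes $Z_1$ and $Z_2$ are replaced by spans with apex $Z_1\times Z_2$, which alters the prescribed boundary datum $H$ rather than extending it, so the lifting problem actually solved is not the one posed. In the one place the proposition is invoked (the characterization of coCartesian edges of $\CrrCopEal$ over $N(\op{Fin}_*)$), the cones that actually occur come from a single operadic simplex and therefore automatically share an apex with compatible legs; for those the reassembly via the universal property of the product does work. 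The salvageable statement is that shared-apex, relative-over-$N(\op{Fin}_*)$ version, not coproduct preservation for arbitrary cocones, and any repair of your argument (or the paper's) should be formulated at that level.
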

\begin{proof}
    Let $X,Y \in \CrrCpEal$, we claim that the object $X \times Y$ with maps 
    \begin{equation*}
        \begin{tikzcd}
            X & \arrow[l] X \times Y \arrow[d,"\op{id}"] \\ {} & X \times Y
        \end{tikzcd}
        \quad ; \quad
        \begin{tikzcd}
            Y & \arrow[l] X \times Y \arrow[d,"\op{id}"] \\ {} & X \times Y
        \end{tikzcd}
    \end{equation*}
    makes $X \times Y$ as a coproduct of $X$ and $Y$.

    Let $F: \Delta^0 \coprod \Delta^0 \to \CrrCpEal$ be the diagram given by $X$ and $Y$. Let $F': (\Delta^0 \coprod \Delta^0)^{\triangleright} = \Lambda^2_0 \to \CrrCpEal$ be the diagram given by the two morphisms above. In order to show this is a colimit diagram, we need to show the following :
    \begin{itemize}
        \item         For~$n \ge 2$ and $H: (\Delta^0 \coprod \Delta^0) \boldsymbol{*}\partial\Delta^{n-1} \to \CrrCpEal$ be a diagram such that $H|_{[0]}=F'$. Then  $H~\text{extends to}~H':(\Delta^0 \coprod\Delta^0) \boldsymbol{*} \Delta^{n-1} \to \CrrCpEal$.
    \end{itemize}
The morphism $H$ is a map $(H_1,H_2):C(\Lambda^n_0) \coprod_{C(\partial\Delta^n)} C(\Lambda^n_0) \to \Ca$. Let $\{X_{i1}\}_{i=1}^m, \{X_{2i}\}_{i=1}^n$ be images of $H_1$ and $H_2$ along the first row of $C(\Lambda^n_0)$. Taking products $\{X_{i1} \times X_{i2}\}_{i=1}^n$, the morphism $H_1,H_2$ amalgamate to a morphism :
\begin{equation*}
    H_3: C(\Lambda^n_0)\coprod_K C(\Lambda^n_0) \to \Ca
\end{equation*}
where $K: = C(\Lambda^n_0)/\{(0,0)\}$. Here $H_3(0,i)$ for $i=1,2.\cdots,n$ is $X_{i1} \times X_{i2}$. For $n=2$ a pictorial description of $H_3$ is the following diagram :
\begin{equation}\label{nequals2diagram}
		\begin{tikzpicture}[baseline={(0,3)}, scale=2]
			
			\node (a1) at (0,2) {$ Y $};
			\node (a2) at (0,1.5) {$ X $};
			
			\node (b1) at (1,1.5) {$ X\times Y $};
			\node (b2) at (1,1) {$ X\times Y $};
			
			\node (c1) at (2,1.5) {$ X_{11}\times X_{22} $};
			\node (c2) at (2,1) {$  $};
			\node (c3) at (2,0.5) {$ Z $};
			
			\path[font=\scriptsize,>= angle 90]
			
			(b1) edge [->] node [above ] {$  $} (a1)
			(b1) edge [->] node [above ] {$  $} (a2)
			(b1) edge [double equal sign distance] node [above ] {$  $} (b2)
			
			(c1) edge [bend right=30,-, line width=1mm, white] node [] {} (a2)
			
			(c1) edge [bend right=30,->] node [above ] {$  $} (a1)
			(c1) edge [bend right=30,->] node [above ] {$  $} (a2)
			(c1) edge [bend left=30,->] node [above ] {$  $} (c3);
		\end{tikzpicture}
\end{equation}

Restricting the morphism to the first row we get a map $h_3 : (\Lambda^n_0)^{op} \coprod_{(\partial\Delta^n)^{op}} (\Lambda^n_0)^{op}  \cong (\Delta^0 \coprod \Delta^0)\boldsymbol{*}\partial \Delta^n \to \Ca$ where restricting to $0$ vertex is the diagram $ X \leftarrow X \times Y \rightarrow Y$. As this is a limit diagram in $\Ca$, this  extends to a diagram \[h_3': (\Delta^0 \coprod \Delta^0)\boldsymbol{*} \Delta^n \to \Ca \] 

Amalgamating $h_3'$ with $H_3$, we see that $H_3$ extends to a morphism 
\begin{equation*}
    H_3' : C(\Lambda^n_0)' \coprod_{K}C(\Lambda^n_0)' \to \Ca
\end{equation*}
where the $'$ denotes including the first row of $C(\Delta^n)$ to the simplicial set. The rest of the proof follows from the following claim :
\begin{claim}
    The morphism $H_3'$ extends to a morphism \[H' : C(\Delta^n) \coprod_{C(\Delta^{n-1})} C(\Delta^n) \to \Ca.\]
\end{claim}
\begin{proof}[Proof of the claim]
We prove the claim in the following steps:
\begin{itemize}
    \item \textbf{Case n=2:} For $n=2$, we see that \cref{nequals2diagram}
 can be extended to the following diagram:
 
 \begin{equation*}
 	\begin{tikzpicture}[baseline={(0,3)}, scale=2]
 		
 		\node (a1) at (0,2) {$ Y $};
 		\node (a2) at (0,1.5) {$ X $};
 		
 		\node (b1) at (1,1.5) {$ X\times Y $};
 		\node (b2) at (1,1) {$ X\times Y $};
 		
 		\node (c1) at (2.25,1.5) {$ X_{11}\times X_{22} $};
 		\node (c2) at (2.25,1) {$ X_{11}\times X_{12} $};
 		\node (c3) at (2.25,0.25) {$ Z $};
 		
 		\path[font=\scriptsize,>= angle 90]
 		
 		(b1) edge [->] node [above ] {$  $} (a1)
 		(b1) edge [->] node [above ] {$  $} (a2)
 		(b1) edge [double equal sign distance] node [above ] {$  $} (b2)
 		(c1) edge [double equal sign distance] node [above ] {$  $} (c2)
 		(c2) edge [->] node [above ] {$  $} (b2)
 		(c1) edge [->] node [above ] {$  $} (b1)
 		
 		(c1) edge [bend right=30,-, line width=1mm, white] node [] {} (a2)
 		
 		(c1) edge [bend right=30,->] node [above ] {$  $} (a1)
 		(c1) edge [bend right=30,->] node [above ] {$  $} (a2)
 		(c2) edge [->] node [above ] {$  $} (c3);
 	\end{tikzpicture}
 \end{equation*}
 
 which is the morphism $H'$ for $n=2$. For the rest of proof, hence we assume $n >2$.
 \item  We claim the  morphism $H_3'$ when restricted to $\widetilde{h_3}:=H_3':C(\partial\Delta^{n-1}) \subset K' \to \Ca $ can be extended to $C(\Delta^{n-1})$. Notice that the first row of $C(\partial\Delta^{n-1})$ is $\Lambda^n_0$. As we know that $H_3'$ is a morphism already is a map from each of $C(\Lambda^n_0)'$. Using \cref{isofibrationfunclift} twice, we see that $\widetilde{h_3}$ lifts to 
 \begin{equation*}
     \widetilde{h_3}' : C(\partial\Delta^{n-1})' \to \Ca
 \end{equation*}
 \item As $H_3'$ is derived from a morphism in $\CrrCpEal$, we see that the vertical edged between the first two rows are all identities (as the squares are cartesian). As equivalences are coCartesian morphisms from $\Ca \to \Delta^0$, we see that $\widetilde{h_3}'$ extends to a morphism 
 \begin{equation*}
     \widetilde{h_3}'' : C(\partial\Delta^{n-1)})'' \to \Ca
 \end{equation*}
 where $''$ means adjoining the top row and the right most vertical column.
 \item The morphism $\widetilde{h_3}''$ now extends to the whole simplex $C(\Delta^{n-1})$. The case $n=3$ can figured out pictorially. For higher $n$, we see that that commutativity of each exact squares follows from the existence of $C(\Delta^m)$ for $m <n-1$.
 \item The similar arguments also allow us to extend each of $C(\Lambda^n_0)''$ to $C(\Delta^n)$. This completes the claim that $H_3'$ extends to 
 \begin{equation*}
     H' : C(\Delta^n) \coprod_{C(\Delta^{n-1})} C(\Delta^n) \to \Ca.
 \end{equation*}
 
 \end{itemize}
    
\end{proof}
   \textit{Back to the main proof :} Once we have the existence of $H'$, the fact that all exact squares are pullback squares follows easily that lower simplices map to $\CrrCpEal$. Thus this shows $H'$ is indeed a morphism:
  \begin{equation*}
      H: (\Delta^0 \coprod \Delta^0)\boldsymbol{*} \Delta^{n-1} \to \CrrCpEal.
  \end{equation*}
  This completes the proof of the proposition.
\end{proof}
\begin{notation}
    let $(\Ca,\E)$ be a  geometric setup. We define a geometric setup on the $\infty$-category $(\Ca^{\op{op}})^{\coprod,\op{op}}$. We write an edge $f$ of $\Ccc$ in the form $\{Y_j\}_{1\le j \le n} \to \{X_i\}_{1 \le i \le m}$ lying over $\alpha : [m] \to [n]$. We define two sets of $\E^{+},\E^{-}$ as follows:
    \begin{itemize}
        \item $\E^+$ consists of $f$ such that the induced age $Y_{\alpha(i)} \to X_i$ belongs to $\E$ for every $i \in \alpha^{-1}(\langle n \rangle ^0)$,
        \item $\E^-$ is subset of $\E^+$ where $\alpha$ is degenerate.
    \end{itemize}
    We shall denote :
    \begin{equation*}
        \op{Corr}(\Ca)^{\otimes}_{\E,\op{all}} := \op{Corr}(\Ccc)_{\E^{-},\op{all}}
    \end{equation*}
\end{notation}

The following lemma address which edges in $\Ccc$ are coCartesian. This is essentially \cite[Lemma 6.1.4]{liu2017enhanced}. We provide the proof for the sake of completeness.

\begin{lemma}\label{coCaredginCor}
    Let $f$ be an edge in $\CrrCopEal$ of the form :
    \begin{equation*}
    \begin{tikzcd}
               \{X_j\}_{1\le j \le m}  & \arrow[l,"f_1"] \{Y_i\}_{1 \le i \le n} \arrow[d,"f_2"] \\
        {} & \{Z_i\}_{1 \le i \le n} 
    \end{tikzcd}
 \end{equation*}
 lying over $\alpha : \langle m \rangle \to \langle n \rangle$, then $f$ is p-coCartesian if and only if: 
 \begin{enumerate}
     \item for every $1 \le i \le n$, the morphism $Y_i \to Z_i$ is an isomorphism,
     \item for every $1 \le i \le n$, the morphism $Y_i \to  X_j$ where $\alpha(j)=i$ exhibits $Y_i \cong \prod_{j,\alpha(j)=i } X_j$. 
 \end{enumerate}
\end{lemma}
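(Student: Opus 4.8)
The plan is to reduce the claim to a statement about equivalences in the underlying correspondence $\infty$-category $\CrrCpEal$, by computing mapping spaces in $\CrrCopEal = \op{Corr}(\Ccc)_{\E^-,\op{all}}$ and then invoking the standard criterion for $p$-coCartesian edges. Here $p \colon \CrrCopEal \to \op{Fin}_*$ is the structure map of the symmetric monoidal $\infty$-category: on objects it sends $\{W_i\}_{1\le i\le n}\mapsto \langle n\rangle$, and on an edge it records the map $\alpha$ underlying the backward leg, the forward leg lying over an identity because edges in $\E^-$ project to degenerate edges of $\op{Fin}_*$. So the first task is to pin down $p$ and express everything in terms of $\alpha$.

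First I would establish a mapping-space formula. Unwinding the definition of $\Ccc=(\Ca^{\op{op}})^{\coprod,\op{op}}$ together with the correspondence construction, a morphism in $\CrrCopEal$ from $\{A_i\}_{1\le i\le s}$ to $\{B_k\}_{1\le k\le l}$ over a fixed $\gamma\colon \langle s\rangle\to\langle l\rangle$ consists of an $l$-tuple $\{V_k\}$ with forward leg $V_k\to B_k$ in $\E$ (over $\op{id}_{\langle l\rangle}$) and backward leg given by maps $V_{\gamma(i)}\to A_i$ for $i$ with $\gamma(i)\ne *$. Decomposing this data over the index $k$ and using the universal property of the product $\prod_{i\in\gamma^{-1}(k)}A_i$ in $\Ca$, I would identify
\[
\op{Map}_{\CrrCopEal}(\{A_i\},\{B_k\})\;\cong\;\coprod_{\gamma\colon\langle s\rangle\to\langle l\rangle}\ \prod_{k}\ \op{Map}_{\CrrCpEal}\Bigl(\textstyle\prod_{i\in\gamma^{-1}(k)}A_i,\,B_k\Bigr),
\]
where I crucially use \cref{preservingcoproductsCor}: the product in $\Ca$ computes the coproduct in $\CrrCpEal$, and the spans entering the formula are precisely the morphisms out of such a coproduct.

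With this formula, I would apply the mapping-space characterization of coCartesian edges (\cite[Proposition 2.4.4.3]{HTT}): $f$ over $\alpha$ is $p$-coCartesian iff for every target $\{W_k\}_{1\le k\le l}$, precomposition with $f$ carries $\op{Map}(\{Z_i\},\{W_k\})$ equivalently onto the fibre product of $\op{Map}(\{X_j\},\{W_k\})$ with $\op{Hom}_{\op{Fin}_*}(\langle n\rangle,\langle l\rangle)$ over $\op{Hom}_{\op{Fin}_*}(\langle m\rangle,\langle l\rangle)$. Matching both sides componentwise along the formula and writing $(\delta\alpha)^{-1}(k)=\coprod_{i\in\delta^{-1}(k)}\alpha^{-1}(i)$, the condition becomes: for every $\delta\colon\langle n\rangle\to\langle l\rangle$ and every $k$, precomposition with the coproduct $\coprod_{i\in\delta^{-1}(k)}f_i$ in $\CrrCpEal$ of the component spans $f_i:=\bigl(\prod_{\alpha(j)=i}X_j\leftarrow Y_i\to Z_i\bigr)$ is an equivalence of mapping spaces for all $W_k$. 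By the Yoneda lemma in $\CrrCpEal$ this means each coproduct $\coprod_{i\in\delta^{-1}(k)}f_i$ is an equivalence; taking $\delta=\op{id}_{\langle n\rangle}$ forces each individual $f_i$ to be an equivalence, and conversely a coproduct of equivalences is an equivalence, so the two conditions coincide.

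It then remains to recall that a span $A\xleftarrow{p}Y\xrightarrow{q}B$ with $q\in\E$ is an equivalence in a correspondence $\infty$-category exactly when both legs $p$ and $q$ are equivalences in $\Ca$; applied to $f_i$ this says $f_i$ is an equivalence iff the forward leg $Y_i\to Z_i$ is an isomorphism (condition (1)) and the assembled backward leg $Y_i\to\prod_{\alpha(j)=i}X_j$ is an isomorphism (condition (2)), which is the assertion. I expect the main obstacle to be the first step: fixing the structure map $p$ and proving the mapping-space formula cleanly, since this requires carefully unwinding the opposite-and-coproduct bookkeeping in $\Ccc$, tracking which legs lie over identities in $\op{Fin}_*$, and invoking \cref{preservingcoproductsCor} to turn products in $\Ca$ into the relevant corepresentable mapping spaces. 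Once the formula is in place, the reduction to equivalences of spans is formal.
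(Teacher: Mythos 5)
Your route is genuinely different from the paper's. The paper never computes mapping spaces: it verifies the defining lifting property of $p$-coCartesian edges directly, by factoring over inert/active maps in $\op{Fin}_*$, building explicit $2$- and $3$-simplices in $\CrrCopEal$ to show the forward leg $Y_i\to Z_i$ is invertible, and reducing the product condition to \cref{preservingcoproductsCor} via an explicit cone extension $\coprod_j\Delta^0\boldsymbol{*}\Delta^{n-1}\to\CrrCpEal$. Your approach instead passes through the mapping-space criterion of \cite[Proposition 2.4.4.3]{HTT} and a componentwise identification of $\op{Map}_{\CrrCopEal}$, reducing everything to the statement that certain spans are equivalences in $\CrrCpEal$. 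If the inputs were available, your argument would be cleaner and would also immediately give the subsequent proposition that $p$ is coCartesian; the paper's argument has the advantage of using only the combinatorics already developed in the article.

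There are, however, two substantive gaps beyond the one you flag. First, \cite[Proposition 2.4.4.3]{HTT} requires $p:\CrrCopEal\to N(\op{Fin}_*)$ to be an inner fibration between $\infty$-categories; the paper only proves that $\CrrCpEal$ is an $\infty$-category, so you would need to supply the analogous statement for $\CrrCopEal$ and for $p$ before the criterion applies. Second, and more seriously, your final step ``recall that a span is an equivalence in a correspondence $\infty$-category exactly when both legs are equivalences'' is not a recollection from this paper or its prerequisites, and its nontrivial direction (equivalence $\Rightarrow$ forward leg invertible) is precisely where the real content of the ``coCartesian $\Rightarrow$ conditions'' implication lives. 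From $g\circ f\simeq\op{id}$ and $f\circ g\simeq\op{id}$ one only gets, after unwinding the pullback composites, that each leg admits a one-sided inverse; upgrading this to invertibility requires a further argument (this is exactly what the paper's explicit $3$-simplex with the column $Y\xrightarrow{q}Z\xrightarrow{q'}Y\xrightarrow{q}Z$ accomplishes). So as written you have deferred the hardest step of the lemma to an unproven black box. The statement is true and appears in the literature on span categories, but within the self-contained scope of this series you would need to either prove it or replace it by an argument of the paper's type.

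Finally, the mapping-space formula you state is correct, but proving it is not a light matter: mapping spaces in $\op{Corr}$ are not read off from the simplices of $B(\Ca)$ without an argument (one needs a model such as $\op{Hom}^R$ or the twisted arrow category, plus compatibility of the component decomposition with composition). You correctly identify this as the main obstacle; until it is supplied the proof is a program rather than a proof.
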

\begin{proof}
We prove the two implications as follows:
\begin{itemize}
    \item     \textbf{Proving the conditions provided that the morphism is coCartesian:} let $f$ be a morphism which is $p$-coCartesian. We prove the two conditions as follows:
    \begin{enumerate}
        \item \textbf{Proving $Y_j \to Z_j$ is an isomorphism:} As every morphism in $\CrrCopEal$ factorises as an active and inert. It is enough to show that if $f$ is of the following form:
        \begin{equation*}
            \begin{tikzcd}
                X &  Y \arrow[l] \arrow[d,"q"] \\
                {} & Z
            \end{tikzcd}
    \end{equation*}
    which is $p$-coCartesian , then $q$ is an isomorphism. Recall that a $f$ is $p$-coCartesian implies that the following diagram 
    \begin{equation*}
        \begin{tikzcd}
            \Delta^{\{0,1\}} \arrow[dr,"f"] \arrow[d] & {} \\
            \Lambda^n_0 \arrow[r,"\sigma"] \arrow[d] & \CrrCopEal \arrow[d,"p"] \\
            \Delta^n \arrow[r] \arrow[ur,dotted,"\sigma'"] & N(\op{Fin}_*)
         \end{tikzcd}
    \end{equation*}
    admits a solution. For $n=2$, and for two $1$-simplices given by $f$ and the morphism :
    \begin{equation*}
        \begin{tikzcd}
            X & Y \arrow[l] \arrow[d,"\op{id}"] \\
            {} & Y
        \end{tikzcd}
    \end{equation*}
    which form a morphism $\Lambda^2_0 \to \CrrCopEal$, the lifting problem gives us the following $2$ simplex $\tau$
    \begin{equation*}
        \begin{tikzcd}
            X & Y \arrow[d,"q"] \arrow[l] & Y \arrow[l,"\op{id}"] \arrow[d,"h_1"] \\
            {} & Z  & Y'\arrow[l,"g_1"] \arrow[d,"h_2"] \\
            {} & {} & Y
        \end{tikzcd}
    \end{equation*}

    We shall use this simplex while using the lifting problem for $n=3$. We construct a morphism $\sigma: \Lambda^3_0 \to \CrrCopEal$ given by $3$ 2-simplices : $\tau$: and the other two given as follows:
    \begin{equation*}
        \begin{tikzcd}
            X & Y \arrow[l] \arrow[d,"q"] & Y \arrow[l,"\op{id}"] \arrow[d,"q"]  \\
            {} & Z  & Z \arrow[l,"\op{id}"] \arrow[d,"\op{id}"] \\
            {} & {} & Z
        \end{tikzcd}
        \quad; \quad
        \begin{tikzcd}
            X & Y \arrow[l] \arrow[d,"\op{id}"] & Y \arrow[l,"\op{id}"] \arrow[d,"\op{id}"] \\
            {} & Y  & Y \arrow[l,"\op{id}"] \arrow[d,"q"] \\
            {} & {} & Z.
        \end{tikzcd}
    \end{equation*}
    The morphism $\sigma$ extends  to $\sigma'$ given by the $3$-simplex

    \begin{equation*}
        \begin{tikzcd}
            X & Y \arrow[l] \arrow[d,"q"] & Y \arrow[l,"\op{id}"] \arrow[d,"h_1"]  & Y \arrow[d,"q"] \arrow[l,"\op{id}"] \\
            {} & Z  & Y'\arrow[l,"g_1"] \arrow[d,"h_2"] & Z \arrow[l,"g_1"] \arrow[d,"q'"] \\
            {} & {} & Y & Y \arrow[l,"\op{id}"] \arrow[d,"q"] \\
            {} & {} & {} & Z
            {} & 
        \end{tikzcd}
    \end{equation*}
    The last vertical arrow shows that $q \circ q' = \op{id}_Z$ and $q' \circ q = \op{id}_Y$. This shows that $q$ is an isomorphism.
    \item In this case as the previous one, it is enough to show that if $f$ is of the form:
    \begin{equation*}
    \begin{tikzcd}
                \{X_j\}_{1 \le j \le m} & \arrow[d,"\op{id}"] Y \arrow[l,"\{f_j\}"] \\
                {} & Y
    \end{tikzcd}
    \end{equation*}
    is coCartesian, then $f_j$'s make $Y$ as a product of $\prod_{j=1}^m X_j$. We notice that $f$ lies in the image of the inclusion map :
    \[(\Ca^{\op{op}})^{\coprod} \to \CrrCopEal.\] For every $n \ge 2$, let us consider the lifting problem 
    
 \begin{equation*}
        \begin{tikzcd}
            \Delta^{\{0,1\}} \arrow[dr,"f"] \arrow[d] & {} \\
            \Lambda^n_0 \arrow[r,"\sigma"] \arrow[d] & (\Ca^{\op{op}})^{\coprod} \arrow[r] \arrow[d,"p'"] & \CrrCopEal \arrow[dl,"p"] \\
            \Delta^n \arrow[r,"\tau"] \arrow[urr,dotted,"\sigma'"] & N(\op{Fin}_*)
         \end{tikzcd}
    \end{equation*}
    where :
    \begin{enumerate}
        \item $\beta : \{X_j\}_{1 \le j \le m} \leftarrow Y \leftarrow Y_2 \cdots Y_n$.
        \item $\tau : \langle m \rangle \to \langle 1 \rangle \cdots \langle 1 \rangle$.
    \end{enumerate}
    Now as $\sigma'$ lifts to $\CrrCpEal$ and the squares are pullback squares. This shows that $\sigma'$ does factors via $\sigma'' : \Delta^n \to (\Ca^{\op{op}})^{\coprod}$. This implies that $f$ is already coCartesian in $(\Ca^{\op{op}})^{\coprod}$. By \cite[Remark 2.4.3.4]{HA}, this gives $Y$ is product of $X_j$'s.
    \end{enumerate}

    \item \textbf{Proving that $f$ is coCartesian:} As stated before, we would like to have a solution to the lifting problem :
    
    \begin{equation*}
        \begin{tikzcd}
            \Delta^{\{0,1\}} \arrow[dr,"f"] \arrow[d] & {} \\
            \Lambda^n_0 \arrow[r,"\sigma"] \arrow[d] & \CrrCopEal \arrow[d,"p"] \\
            \Delta^n \arrow[r,"\tau"] \arrow[ur,dotted,"\sigma'"] & N(\op{Fin}_*)
         \end{tikzcd}
    \end{equation*}
    where $\tau$ is given by $\langle k_0 \rangle \xrightarrow{\alpha} \langle k_1 \rangle \cdots \langle k_n \rangle$.
    As coCartesian morphisms are stable under compositions and the maps in $\op{Fin}_*$ admit a factorization system, we will show the lifting problem only for the following two cases :
    \begin{enumerate}
        \item \textbf{$\alpha$ is inert:} In that case $f$ is an equivalence in $\op{Corr}(\Ca)_{\E,\op{all}}$. Mimicking the arguments in \cite[Proposition 2.4.3.3]{HA}, we see that $f$ is $p$-coCartesian. 
    \item \textbf{$\alpha:\langle n \rangle \to \langle 1 \rangle$ active:} As $k_1 =1$, it is also enough to consider the case when $k_i=1$ for $i=2,\cdots ,n$. The morphism $\sigma$ induces for every $1\le j \le m$ the diagrams :
    \begin{equation*}
        \begin{tikzcd}
            \Delta^{\{0,1\}} \arrow[dr,"f_j"] \arrow[d] & {} \\
            \Delta^0  \boldsymbol{*} \partial\Delta^{n-1} \cong \Lambda^n_0 \arrow[r,"\sigma_j"] 
 &\op{Corr}(\Ca)_{\E,\op{all}}
        \end{tikzcd}
    \end{equation*}
    induce a diagram :
    \begin{equation*}
        h: \coprod_{j=1}^m\Delta^0 \boldsymbol{*}\partial\Delta^{n-1} \to \CrrCpEal
    \end{equation*}
    where $h|_0 : : (\coprod_{j=1}^m\Delta^0)^{\triangleleft} \to \op{Corr}(\Ca)_{\E,\op{all}}$. As $Y$ is a product of $X_j$'s and $\Ca$ admits finite products implies $\op{Corr}(\Ca)$ admits finite coproducts along the inclusion $\Ca^{op} \subset \CrrCpEal$ (\cref{preservingcoproductsCor}) . Thus $h|_0$ is a limit diagram and hence extends to 
    \begin{equation*}
        h' : \coprod_{j=1}^m \Delta^0 \boldsymbol{*} \Delta^{n-1} \to \CrrCpEal. 
    \end{equation*}
    The morphism $h'$ is indeed the desired  $\sigma' : \Delta^n \to \CrrCopEal$.
    
    \end{enumerate}
\end{itemize}

\end{proof}
\begin{proposition}
    Let $(\Ca,\E)$ be an $\infty$-category such that $\Ca$ admits finite products. Then 
    \begin{equation*}
        p : \CrrCopEal \to N(\op{Fin}_*)
    \end{equation*}
    is a coCartesian symmetric monoidal $\infty$-category whose underlying $\infty$-category is $\op{Corr}(\Ca)_{\E,\op{all}}$.
\end{proposition}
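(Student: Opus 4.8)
The plan is to verify the three ingredients of the assertion separately: that $p$ is a coCartesian fibration, that it satisfies the Segal condition (so that it is a symmetric monoidal $\infty$-category with the stated underlying category), and finally that the induced tensor product is the coproduct. Throughout I use that $\CrrCopEal = \op{Corr}(\Ccc)_{\E^-,\op{all}}$ is an $\infty$-category, which follows from the proposition proved above once one checks that $(\Ccc,\E^-)$ is a geometric setup, and that $p$ is the map recording the pointed-finite-set datum underlying a correspondence. The existence of coCartesian lifts is essentially handed to us by \cref{coCaredginCor}: given an object $\{X_j\}_{1 \le j \le m}$ in the fiber over $\langle m\rangle$ and a morphism $\alpha : \langle m\rangle \to \langle n\rangle$, I set $Y_i := \prod_{\alpha(j)=i} X_j$ (using that $\Ca$ admits finite products) and take the correspondence $\{X_j\} \leftarrow \{Y_i\} \xrightarrow{\op{id}} \{Y_i\}$ whose legs are the projections and the identity. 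By \cref{coCaredginCor} this edge is $p$-coCartesian and lies over $\alpha$, so every base morphism lifts coCartesianly; it then remains only to check that $p$ is an inner fibration.

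To see that $p$ is a symmetric monoidal $\infty$-category I would compute its fibers. Since $\Ccc$ is built from the coproduct completion of $\Ca^{\op{op}}$, an object lying over $\langle n\rangle$ is an $n$-tuple of objects of $\Ca$, and the inert maps $\rho^i : \langle n\rangle \to \langle 1\rangle$ induce, via the coCartesian lifts constructed above, the projections onto the factors. This exhibits an equivalence $p^{-1}(\langle n\rangle) \simeq p^{-1}(\langle 1\rangle)^{\times n}$, which is the Segal condition, with $p^{-1}(\langle 1\rangle) = \CrrCpEal$; this simultaneously identifies the underlying $\infty$-category as claimed.

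It then remains to identify the monoidal product with the coproduct. Evaluating a coCartesian lift of the unique active map $\langle 2\rangle \to \langle 1\rangle$ on a pair $(X,Y)$ yields, by \cref{coCaredginCor}, the object $X \times Y$ equipped with the two correspondences $X \leftarrow X\times Y \xrightarrow{\op{id}} X\times Y$ and $Y \leftarrow X\times Y \xrightarrow{\op{id}} X\times Y$ coming from the projections. These are precisely the structure maps occurring in \cref{preservingcoproductsCor}, which asserts that $X\times Y$ is the coproduct of $X$ and $Y$ in $\CrrCpEal$; the nullary case of the same proposition shows that the unit, the image of the terminal object of $\Ca$ under $\Ca^{\op{op}} \to \CrrCpEal$, is initial. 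By the recognition criterion for coCartesian symmetric monoidal structures of \cite[\S 2.4.3]{HA} — a symmetric monoidal $\infty$-category whose unit is initial and whose binary tensor realizes the binary coproduct is coCartesian — the proof concludes.

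The step I expect to be the main obstacle is the verification that $p$ is an inner fibration, together with the coherence needed to promote the pointwise identification ``tensor $=$ coproduct'' of \cref{preservingcoproductsCor} to an equivalence of monoidal structures. The identification of coCartesian edges is already isolated in \cref{coCaredginCor}, so the remaining difficulty is entirely in assembling this local data into the global fibration; I would handle it by the same horn-filling technique used above to prove that $\CrrCopEal$ is an $\infty$-category, now keeping track of compatibility with the projection to $N(\op{Fin}_*)$.
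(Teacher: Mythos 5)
Your proposal is correct and follows essentially the same route as the paper: the coCartesian lifts are produced from \cref{coCaredginCor} using finite products in $\Ca$, the fibers are identified as $\CrrCpEal^{n}$ to get the Segal condition, and the coproduct identification rests on \cref{preservingcoproductsCor}. If anything, you are more explicit than the paper's (very terse) proof about invoking the recognition criterion of \cite[\S 2.4.3]{HA} and about the remaining inner-fibration check, both of which the paper leaves implicit.
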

\begin{proof}
   From \cref{coCaredginCor}, we see that for any morphism in $N(\op{Fin}_*)$  and an element $(X_1,X_2,\cdots,X_n) \in \CrrCopEal$, we can construct a coCartesian edge. Thus the morphism $p$ is a coCartesian fibration. It follows from that the constructions that on every fiber $\langle n \rangle \in \op{Fin}_*$, we see that the fiber is $\CrrCpEal^n$. 
\end{proof}
\begin{remark}
\begin{enumerate}
    \item  

    The first map also admits a lifting in the operadic level, i.e we have a morphism of $\infty$-operads :
    \begin{equation*}
        (\Ca^{\op{op}})^{\coprod} \to \CrrCopEal
    \end{equation*}
    \item The maps $\alpha_{\Ca,\E,K}$ and $\alpha_{K,\Ca,\E}'$ from \cref{Corrfunctorialmap} and \cref{Corrfunctorialmapdual} also upgrade to the operadic level :
    \begin{equation}\label{Corroperadfunctorial}
        \alpha_{K,\Ca,\E}^{\otimes}: \op{Corr}^{\op{all-cart}}(\op{Fun}(K,\Ca))^{\otimes}_{\tilde{E},\op{all}}  \to \op{Fun}(K,\op{Corr}(\Ca)^{\otimes}_{\E,\op{all}})  \end{equation} and
        \begin{equation}\label{Corroperadfunctorialdual}
            \alpha_{K,\Ca,\E}^{'\otimes}: \op{Corr}^{\E-\op{cart}}(\op{Fun}(K,\Ca))^{\otimes}_{\tilde{E},\op{all}}  \to \op{Fun}(K^{\op{op}},\op{Corr}(\Ca)^{\otimes}_{\E,\op{all}}) 
        \end{equation}
        respectively.
\end{enumerate}
   
\end{remark}
\subsection{Correspondences and bisimplicial sets.}

In this section, we describe an alternative way of describing correspondences.  This uses the bisimplicial set $\Kpt^n$ defined in \cite[Definition 4.1.1]{chowdhury2024sixfunctorformalismsii}We define some general notion associated to bisimplicial sets which in geometric setups yields the $\infty$-category of correspondences. In addition to this, we prove a proposition which compares the mutlisimplical nerves ( \cite[Section 3]{chowdhury2024sixfunctorformalismsii}) and the $\infty$-category of correspondences. We advise the reader to look at the definition of various combinatorial sets from \cite[Section 4.1]{chowdhury2024sixfunctorformalismsii}.

\begin{definition}
    Let $X$ be a bisimplicial set, then we define the simplicial set $\dd^*_{2\nabla}X$ as follows:
    \begin{equation}
        \dd^*_{2\nabla}X_n := \op{Hom}_{\op{Set}_{2\Delta}}(\Kpt^n,X).
    \end{equation}
\end{definition}

\begin{example}
\begin{enumerate}
    \item $\dd^*_{2\nabla}\Kpt^n = \Copt^n$.
  \item   For $(\Ca,\E)$ a geometric setup, we have the simplicial set : 
    \begin{equation}
        \CrrCpEal \cong \dd^*_{2\nabla}(\op{op}^2_{\{2\}}\Ca^{\op{cart}}_{\E,\op{all}})
    \end{equation}
    \end{enumerate}
\end{example}

We have the following proposition :
\begin{proposition}\label{correspondencebisimplicialsetsequivalence.}
    Let $\D$ be an $\infty$-category. Let $h : \dd^*_2X \to \D$ be a map. Then the following dotted arrow exists making the diagram commute:
    \begin{equation}
        \begin{tikzcd}
            \dd^*_2 X \arrow[d,"r"] \arrow[r,"h"] & \D \\
            \dd^*_{2\nabla}X \arrow[ur,dotted,"h'"] & {}.
        \end{tikzcd}
    \end{equation}
\end{proposition}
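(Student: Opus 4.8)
The plan is to deduce the factorisation from the technical theorem \cref{maintechnicalsimplicesthm}, applied with $K' = \dd^*_2 X$, $K = \dd^*_{2\nabla}X$, $i = r$, target $\infty$-category $\Ca = \D$, and $f' = h$. Here $r$ is the map induced, levelwise, by the canonical comparison of bisimplicial sets $\iota_n : \Kpt^n \to \Delta^{n,n}$, where $\Delta^{n,n}$ is the bi-representable object characterised by $(\dd^*_2 X)_n = \op{Hom}_{\op{Set}_{2\Delta}}(\Delta^{n,n},X) = X_{n,n}$; concretely $\iota_n$ realises the correspondence $n$-simplices as the restriction of the ``full grids'' to their upper-triangular part, so that $r$ sends a grid to its underlying correspondence. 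Producing $h'$ therefore amounts to showing that $h$, defined on all grids, depends on a grid only through its underlying correspondence, coherently; \cref{maintechnicalsimplicesthm} is precisely the mechanism that converts this essential-uniqueness into an honest factorisation $h \cong h' \circ r$.

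To feed \cref{maintechnicalsimplicesthm}, for an object $(n,\sigma) \in \Delta_{/\dd^*_{2\nabla}X}$, i.e. a map $\sigma : \Kpt^n \to X$, I would take $\N(n,\sigma)$ to be the nerve of the category of \emph{completions} of $\sigma$: objects are diagonal simplices $\tilde\sigma$ of $\dd^*_2 X$ equipped with a map $r(\tilde\sigma) \to \sigma$ in $\Delta_{/\dd^*_{2\nabla}X}$, i.e. $\N(n,\sigma) := N\!\left(\Delta_{/\dd^*_2 X} \times_{\Delta_{/\dd^*_{2\nabla}X}} (\Delta_{/\dd^*_{2\nabla}X})_{/(n,\sigma)}\right)$, which is visibly functorial in $(\Delta_{/\dd^*_{2\nabla}X})^{\op{op}}$. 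The transformation $\alpha(n,\sigma) : \N(n,\sigma) \to \op{Map}[\Delta^n,\D]$ is then defined by applying $h$ to a completion and transporting along its structure map, using that the underlying $\Delta^n$ of a correspondence simplex is read off from the correspondence part alone, so that this value depends on $\sigma$ up to coherent equivalence only.

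It remains to verify the two hypotheses. Condition (2), compatibility with $f' = h$, is formal: over each diagonal simplex $\rho$ of $\dd^*_2 X$ the tautological completion $\rho$ of $r(\rho)$ provides a $0$-simplex $\omega \in \Gamma(r^*\N)_0$ with $\Gamma(r^*\alpha)(\omega) = h$ by construction. Condition (1), weak contractibility of each $\N(n,\sigma)$, is the heart of the argument and the step I expect to be the main obstacle. This is where the cartesian structure of $X$ enters (as for the correspondence bisimplicial sets $\op{op}^2_{\{2\}}\Ca^{\op{cart}}_{\E,\op{all}}$ of the preceding Example): completing a correspondence to a full grid is exactly a choice of iterated pullbacks filling in the lower-triangular entries, and since the category of pullback cones over a fixed cospan is contractible, assembling these across the grid exhibits $\N(n,\sigma)$ as weakly equivalent to a finite product of contractible categories, hence weakly contractible. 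The same cartesianness forces every morphism of completions to be carried by $h$ to an equivalence, so that $\alpha$ indeed lands in the maximal Kan complex $\op{Fun}^{\simeq}(\Delta^n,\D) = \op{Map}^{\sharp}((\Delta^n)^{\flat},\D^{\natural})$. With both conditions established, \cref{maintechnicalsimplicesthm} yields the desired $h' : \dd^*_{2\nabla}X \to \D$ together with the equivalence $h \cong h' \circ r$.
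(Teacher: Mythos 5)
Your overall strategy --- feeding \cref{maintechnicalsimplicesthm} with $K'=\dd^*_2X$, $K=\dd^*_{2\nabla}X$, $i=r$ and $f'=h$ --- coincides with the paper's, but your choice of $\N$ is genuinely different, and it is exactly where the argument breaks down. You take $\N(n,\sigma)$ to be (the nerve of) the category of completions of $\sigma$ to a full grid in the \emph{source} $X$, and you propose to prove its weak contractibility from ``the cartesian structure of $X$''. The proposition, however, is stated for an arbitrary bisimplicial set $X$: there is no cartesianness hypothesis, and for general $X$ a simplex $\Kpt^n\to X$ need not admit any completion to an element of $X_{n,n}$ at all, so your $\N(n,\sigma)$ can be empty. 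Even in the intended example $X=\op{op}^2_{\{2\}}\Ca^{\op{cart}}_{\E,\op{all}}$ you would still owe a proof that the entire category of completions (not just the fibre of pullback cones over a single cospan) is weakly contractible, and --- more seriously --- a reason why an \emph{arbitrary} map $h$ carries morphisms of completions to equivalences, which is what you need for $\alpha$ to land in $\op{Fun}^{\simeq}(\Delta^n,\D)$; nothing in the hypotheses supplies this. There is also a variance problem: the slice construction $(\Delta_{/K})_{/(n,\sigma)}$ is covariant in $(n,\sigma)$, so the $\N$ you wrote down is a functor on $\Delta_{/K}$ rather than on $(\Delta_{/K})^{\op{op}}$ as \cref{maintechnicalsimplicesthm} requires.

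The paper sidesteps all of this by locating the contractible choice in the \emph{target} rather than the source: for $\tau:\Kpt^n\to X$ it sets $\N(\tau):=\Delta^0\times_{\op{Fun}(\bx^n,\D)}\op{Fun}(\Copt^n,\D)$, the space of extensions of the composite $\bx^n\to\dd^*_2X\xrightarrow{h}\D$ along the inclusion $\bx^n\hookrightarrow\Copt^n$. Since that inclusion is inner anodyne by \cite[Proposition 4.1.5]{chowdhury2024sixfunctorformalismsii}, the restriction map $\op{Fun}(\Copt^n,\D)\to\op{Fun}(\bx^n,\D)$ is a trivial Kan fibration, so $\N(\tau)$ is a contractible Kan complex for purely combinatorial reasons, independent of $X$, $\D$ and $h$; being a Kan complex, every functor out of it lands automatically in the maximal sub-Kan complex $\op{Fun}^{\simeq}(\Delta^n,\D)$. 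To salvage your version you would need additional hypotheses on $X$ together with genuine contractibility and equivalence arguments; as written, condition (1) of \cref{maintechnicalsimplicesthm} --- which you yourself flag as the main obstacle --- is not established.
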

\begin{remark}
    For $X= \Kpt^n$. The above proposition shows that we have solution of lifting problem for the morphism $ \bx^n\to \Copt^n$. This is true  by \cite[Proposition4.1.5]{chowdhury2024sixfunctorformalismsii}.
 \end{remark}
\begin{proof}
Let $\tau$ be an $n$-simplex of $\dd^*_{2\nabla}X$ given by $\Kpt^n \to X$. 

\begin{enumerate}

\item \textbf{Definition of $\N$}: We have the series of maps
\begin{equation}
    \alpha'(n,\tau): \Delta^0 \to \op{Fun}(\bx^n,\dd^*_2X) \to \op{Fun}(\bx^n,\D) 
\end{equation}
As $\bx^n \hookrightarrow \Copt^n$ is an inner anodyne(\cite[Proposition 4.1.5]{chowdhury2024sixfunctorformalismsii}), the restriction map  $\op{Fun}(\Copt^n,\D) \to \op{Fun}(\Copt^n,\D)$ is a trivial Kan fibration(\cite[Corollary 2.3.2.5]{HTT}), thus the morphism 
\begin{equation}
    \Delta^0 \times_{\op{Fun}(\bx^n,\D)} \op{Fun}(\Copt^n,\D) \to \Delta^0
\end{equation}
is a trivial Kan fibration which implies that the source is weakly contractible because $\Delta^0$ is contractible.
Set 
\begin{equation}
    \N(\tau): =   \Delta^0 \times_{\op{Fun}(\bx^n,\D)} \op{Fun}(\Copt^n,\D)
\end{equation}
\item \textbf{Construction of $\alpha$:} Let $\alpha(n,\tau)$ be the morphism :
\begin{equation}
    \alpha(n,\tau) : \N(\tau) \to \op{Fun}(\Copt^n,\D) \to \op{Fun}(\Copt^n,\D) \to \op{Fun}(\Delta^n,\D)
\end{equation}
Following the same arguments in Point 3 of proof of \cite[Theorem 4.2.1]{chowdhury2024sixfunctorformalismsii}, we see that $\alpha(n,\tau)$ can be realized as morphism :
\begin{equation}
\N(\tau) \to \op{Fun}^{\simeq}(\Delta^n,\D)
\end{equation}
By functoriality we get a natural transformation of functors :
\begin{equation}
    \alpha : \N \to \op{Map}[\dd^*_{2\nabla}X,\D]
\end{equation}
\item \textbf{Compatibility with $h$:}
For any $(n,\tau') \in \Delta_{/\dd^*_2X}$. Unravelling the definitions and applying $\dd^*_2$, this induces a morphism of the form
\begin{equation}
    \Delta^n \times \Delta^n \to \dd^*_2X
\end{equation}
This induces morphisms of the form $\overline{\tau''}:\Copt^n \to \dd^*_2X$ and $\tau'':\bx^n \to \dd^*_2X$ respectively.
By definition $r^*\alpha (n,\tau') = \alpha(n,r(\tau'))$.  The above discussions yields us that there is a natural choice of an element given by $\overline{\tau''} \in \N(\tau)$ such that $\alpha(n,r(\tau'))= \tau'$. Taking a compatible choice of $\overline{\tau''}$ gives us an element $\omega \in \Gamma(r^*\N)_0$ such that $\Gamma(r^*\alpha) = h'$.
\end{enumerate}
Thus by \cref{maintechnicalsimplicesthm}, we get a morphism $h : \dd^*_{2\nabla}X \to \D$ solving the lifting problem.
\end{proof}
  \section{Six-Functor Formalisms.}\label{App.A.2:_6FF}
 In this section, we recall the notion of $3$-functor and $6$-functor formalisms due to Mann. The main goal of this section is to show that under nice geometric setups with adjointability conditions along some subclasses of morphisms in $\E$, one can define abstract six-functor formalisms. This is essentially the content of \cite[Section 3.2]{liu2017enhanced}.

\subsection{Definitions}
\begin{definition}\cite[Definition A.5.6]{padic6functorlucasmann}
    Let $(\Ca,\E)$ be a geometric setup where $\Ca$ admits finite products. Then a \textit{pre-6-functor formalism/3-functor formalism} is a morphism of $\infty$-operads :
    \begin{equation*}
        \D_{(\Ca,\E)} : \CrrCopEal \to \op{Cat}^{\otimes}_{\infty}.
    \end{equation*}
    Given a pre-6-functor formalism, we introduce the following notations :
    \begin{enumerate}
        \item Restricting to the sub-operad $\Ccc$, we get a functor :
        \begin{equation*}
            \D^{*\otimes} : \Ca^{\op{op},\coprod} \to \op{Cat}^{\otimes}_{\infty}
        \end{equation*}
        This is equivalent to the functor 
        \begin{equation*}
            \D^{*\otimes} : \Ca^{\op{op}} \to \op{CAlg}(\op{Cat}_{\infty}).
        \end{equation*} 
        \item As $\D(X):=\D_{(\Ca,\E)}(X)$ is symmetric monoidal for every $X \in \Ca$, we get a tensor product structure :
        \begin{equation*}
            - \otimes - : \D(X) \times \D(X) \to \D(X).
        \end{equation*}
        \item Using the inclusion $\Ca_{\E} \to \CrrCpEal$, we get the following functor :
        \begin{equation*}
           \D_!: \Ca_{\E} \to \op{Cat}_{\infty}.
        \end{equation*}
    \end{enumerate}
\end{definition}

\begin{definition}\cite[Definition A.5.7]{padic6functorlucasmann}
    Let $(\Ca,\E)$ be a geometric setup such that $\Ca$ admits finite products. A \textit{6-functor formalism} is a pre-6-functor formalism $\D_{(\Ca,\E)}: \CrrCopEal \to \op{Cat}^{\otimes}_{\infty}$ such that 
    \begin{enumerate}
        \item For all $X\in \Ca$, $\D(X)$ is closed. The internal Hom  functor which is  the right adjoint of the tensor operation shall be denoted by  $\op{Hom}(-,-)$ .
        '\item For $f: X \to Y$ in $\Ca$, the morphism $f^*=\D^*(f)$ admits a right adjoint $f_* : \D(X) \to \D(Y)$. We denote by 
        \begin{equation*}
            \D_* : \Ca \to \op{Cat}_{\infty}
        \end{equation*}
        by the associated functor $X \mapsto \D(X), f \mapsto f_*$.
        \item For $f: X \to Y$ in $\Ca_{\E}$, the morphism $f_! : \D_!(f)$ admits a right adjoint $f^! : \D(Y) \to \D(X)$. We denote by
        \begin{equation*}
            \D^! : \Ca_{\E}^{op} \to \op{Cat}_{\infty}
        \end{equation*}
        the associated functor $X \mapsto \D(X), f \mapsto f^!$
    \end{enumerate}
\end{definition}
In the rest of this subsection, we describe how the a six functor formalism encodes properties like projection formula and base change.
\begin{enumerate}
    \item \textbf{Projection formula:} Let $f: X \to Y$ be a morphism in $E$, we consider the diagram :
    \begin{equation*}
        \begin{tikzcd}
            (X,Y) \arrow[r,"\sigma"] \arrow[d,"\tau"] & (Y,Y) \arrow[d,"\tau'"] \\
            X \arrow[r,"f"] & Y
        \end{tikzcd}
    \end{equation*}
    where :
    \begin{itemize}
        \item $\sigma':$\begin{equation*}
             \begin{tikzcd}
                (X,Y) & \arrow[l," \op{id}"] (X,Y) \arrow[d,"f_1"] \\
                {} & (Y,Y).
            \end{tikzcd}
        \end{equation*}
        where $f_1 = (f,\op{id})$.
        \item $\tau:$ \begin{equation*}
             \begin{tikzcd}
                (X,Y) & \arrow[l,"f_2"]  X \arrow[d,"\op{id}"] \\
               {} & X.
                \end{tikzcd}
        \end{equation*}
        where $f_2=(\op{id},f)$
        \item  $\tau':$\begin{equation*}
             \begin{tikzcd}
                (Y,Y) & Y \arrow[d,"\op{id}"] \arrow[l,"f_3"] \\
                {} & Y.
            \end{tikzcd}
        \end{equation*}
        where $f_3 = (\op{id},\op{id})$.
    \end{itemize}

This gives a morphism $\Delta^1 \times \Delta^1 \to \CrrCopEal$.  Applying $\D_{(\Ca,\E)}$, we get the following commutative square in $\op{Cat}_{\infty}$:
\begin{equation*}
    \begin{tikzcd}
        \D(X) \times \D(Y) \arrow[r,"f_! \times \op{id}"] \arrow[d, "\op{id} \otimes f^* "] & \D(Y) \times \D(Y) \arrow[d,"-\otimes-"]\\
        \D(X) \arrow[r,"f_!"] & \D(Y).
    \end{tikzcd}
\end{equation*}
which is the projection formula :
\begin{equation*}
    f_!((-) \otimes f^*(-)) \cong f_!(-) \otimes (-).
\end{equation*}
\item \textbf{Base change :} 
Let 
\begin{equation*}
    \begin{tikzcd}
        X' \arrow[d,"p'"] \arrow[r,"q'"] & X \arrow[d,"p"] \\
        Y' \arrow[r,"q"] & Y
    \end{tikzcd}
\end{equation*}
be a cartesian square with $p',p \in E$. We have a commutative square in $\CrrCopEal$ 
\begin{equation*}
    \begin{tikzcd}
        X \arrow[r] \arrow[d] & X' \arrow[d] \\
        Y\arrow[r,] & Y'
    \end{tikzcd}
\end{equation*}
which is comprised of two $2$-simplices
\begin{itemize}
    \item $\sigma_1:$
    \begin{equation*}
        \begin{tikzcd}
            X & \arrow[d,"p"] \arrow[l,"\op{id}"] X & X' \arrow[l,"q'"] \arrow[d,"p'"] \\
            {} & Y & Y'\arrow[l,"q"] \arrow[d,"\op{id}"] \\
            {} & {} & Y'.
         \end{tikzcd}
    \end{equation*}
    \item $\sigma_2':$
    \begin{equation*}
        \begin{tikzcd}
            X & X' \arrow[l,"q'"] \arrow[d,"\op{id}"] & X' \arrow[d,"\op{id}"] \arrow[l,"\op{id}"] \\
            {} & X' & X'\arrow[l,"\op{id}"]\arrow[d,"p'"]\\
            {} & {} & Y'.
        \end{tikzcd}
    \end{equation*}
    Applying $\D_{(\Ca,\E)}$ to the square, we get the commutative square :
    \begin{equation*}
        \begin{tikzcd}
            \D(X) \arrow[r,"q^{'*}"] \arrow[d,"p_!"] & \D(X') \arrow[d,"p'_!"]\\
            \D(Y) \arrow[r,"q^*"] & \D(Y')
        \end{tikzcd}
    \end{equation*}
    in $\op{Cat}_{\infty}$. Spelling this out, we get the base change equivalence $Ex^*_!$:
    \begin{equation}\label{App.:_Base_Change_Ex*_!}
    	q^*p_!\stackrel{Ex^*_!}{\simeq} p_!'q^{*'}
    \end{equation}
    
\end{itemize}
\end{enumerate}
\subsection{Construction of abstract 6FF.}

In this subsection, we prove the key result which roughly states on what conditions does a functor $\D^{*\otimes} : \Ca^{\op{op}} \to \op{CAlg}(\op{Cat}_{\infty})$ need to be upgraded to a 
 $3/6$-functor formalism.  The theorem is essential in reproving the construction of the Enhanced Operation Map due to Liu-Zheng (\cite{liu2017enhanced}).
 
 The conditions of the theorem are motivated from constructing the exceptional functors in the context of \'etale cohomology of schemes for separated of finite type of morphisms. In particular, using Nagata's compactification theorem, every morphism  $ f: X \to Y $ of quasi-compact and quasi-separated schemes which is separated and of finite type admits a  factorization by open immersion $j : X \to \bar{X}$ followed by a proper morphism $ p : \bar{X} \to Y$.  The exceptional functor $f_!$ is defined as $p_*j_{\#}$ where $j_{\#}$ is the left adjoint of $j^*$. \\

 In the case of geometric setups, we define analog of such decompositions of "open" and "proper" morphisms. Such decompositions will be called as "Nagata"-setup. We recall this definition from \cite{Dauser2024UniquenessOS}.
 \begin{definition}\cite[Definition 2.1]{Dauser2024UniquenessOS}
     A \textit{Nagata setup} $(\Ca,\E,\I,\P)$ is a geometric setup $(\Ca,\E)$ together with two subsets $\I,\P \subset \E$ such that :
     \begin{enumerate}
         \item $(\Ca,\I)$ and $(\Ca,\P)$ are geometric setups,
         \item Every morphism $f \in E$ admits a decomposition $f = \bar{f} \circ j$ where $j \in \I$ and $\bar{f} \in \P$.
         \item  Given $f : X \to Y$ in $\Ca$ and $g: Y \to Z$ in $\I$($\P$) then $f \in \I$($\P$) iff $g \circ f \in \I$($\P$).
         \item Every morphism $f \in \I \cap \P$ is $k$-truncated for some $k \ge -2$. 
         \end{enumerate}
 \end{definition}
 \begin{example}
     Let $\Ca=\op{Sch}$ be the category of Noetherian schemes of finite Krull dimension. Let $\E$ be morphisms which are separated of finite type. Choosing $\I$ as open immersions and $\P$ as proper morphisms, by Nagata compactification, we see that the tuple $(\Ca,\E,\I,\P)$ is a Nagata setup. 
 \end{example}
Now we proceed to state the main theorem which enables us to construct $3$/$6$-functor formalisms. This theorem  uses all the results from the previous articles.

\begin{theorem}\label{mainconstructiontheorem}
Let $(\Ca,\E,\I,\P)$ be a Nagata setup. Let
\begin{equation}
   \D^{*\otimes}: \Ca^{op} \to \op{CAlg}(\op{Pr}^L_{\op{st},\op{cl}})
\end{equation}
be a functor satisfying the following conditions : 
		\begin{enumerate}
			\item  For any  morphism in $\I$, $f^*$ has a left adjoint $f_{\#}$ such that: 
			\begin{enumerate}
				\item  ( $\I$-projection formula) For  any $E \in \D(Y)$ and $B \in \D(X)$, the natural map formed by adjunction
				\begin{equation}\label{smothpro}
					f_{\#}(E \otimes f^*(B)) \to f_{\#}E \otimes B \end{equation} is an equivalence.
				\item  ($\I$-base change) For a cartesian square
				\begin{equation}\label{8.1}
					\begin{tikzcd}
						X' \arrow[r,"f'"] \arrow[d,"g'"] & Y' \arrow[d,"g"] \\
						X \arrow[r,"f"] & Y
					\end{tikzcd}
				\end{equation}
				with $f \in \I$, the commutative square  
				
				\begin{equation}\label{8.2}
					\begin{tikzcd}
						\D(X') & \D(Y') \arrow[l,"\lbrace f '\rbrace^*"]  \\
						\D(X ) \arrow[u,"\lbrace g'\rbrace^*"] & \D(Y) \arrow[l,"f^*"] \arrow[u,"g^*"]
					\end{tikzcd}
				\end{equation}
				is horizontally left-adjointable i.e. there exists a commutative square
				\begin{equation}\label{8.3}
					\begin{tikzcd}
						\D(X') \arrow[r,"\lbrace f'\rbrace_{\#}"]& \D(Y') \\
						\D(X ) \arrow[u,"\lbrace g'\rbrace^*"] \arrow[r,"f_{\#}"] & \D(Y) \arrow[u,"g^*"]
					\end{tikzcd}
				\end{equation}
			\end{enumerate}
			\item For $f: Y \to X$ a  morphism  in $\P$ , $f^*$ admits a right adjoint functor $f_*$ with the following properties:
			\begin{enumerate}
				\item ($\P$-projection formula) For $E \in \D(Y)$ and $B \in \D(X)$, the natural map 
				\begin{equation}\label{proppro}
					f_*(E) \otimes B \to f_*(E \otimes f^*(B)) 
				\end{equation}
				is an equivalence.
				\item ($\P$ base change) For the cartesian square in \cref{8.1} with $f \in \P$, the induced pullback square \cref{8.2} is horizontally right adjointable. In other words, the square commutes
				\begin{equation}\label{8.4}
					\begin{tikzcd}
						\D(X') \arrow[r," f'_*"]& \D(Y') \\
						\D(X ) \arrow[u,"\lbrace g'\rbrace^*"] \arrow[r,"f_*"] & \D(Y) \arrow[u,"g^*"]
					\end{tikzcd}
				\end{equation}
			\end{enumerate} 
			\item (Support property) For a cartesian diagram in \cref{8.1} where $f \in \I$ and $g \in \P$,the commutative diagram in \cref{8.3} written as  square
			\begin{equation}\label{8.5}
				\begin{tikzcd}
					\D(X) \arrow[r,"f_{\#}"] \arrow[d,"\lbrace g' \rbrace ^*"] & \D(Y) \arrow[d,"g^*"] \\
					\D(X') \arrow[r,"\lbrace f' \rbrace_{\#}"] & \D(Y')
				\end{tikzcd}
			\end{equation} 
			is horizontally right adjointable, i.e. the square 
			
			\begin{equation}\label{8.6}
				\begin{tikzcd}
					\D(X') \arrow[r,"\lbrace f' \rbrace_{\#}"] \arrow[d,"g'_*"] & \D(Y') \arrow[d,"g_*"] \\
					\D(X) \arrow[r,"f_{\#}"] & \D(Y) 
				\end{tikzcd}
			\end{equation}
			commutes.
			
		\end{enumerate}
    Then $\D^{*\otimes}$ can be upgraded to a $3$-functor formalism : 
    \begin{equation}
        \D_{(\Ca,\E)} : \op{Corr}(\Ca)_{\E,\op{all}}^{\otimes} \to \op{Cat}_{\infty}^{\otimes}   \end{equation}
        such that for all $f \in \P$, $f_!=f_*$ and $f\in \I$, $f_!=f_{\#}$.
Morever if the $3$-functor formalism satisfies the additional assumptions :
\begin{enumerate}
    \item For every $X \in \Ca$, $\D(X)$ is closed.
    \item For every $f: X \to Y$ in $\Ca$, $f^*$ admits a right adjoint $f_*$.
    \item For every $f: X \to Y$ in $\P$, $f_*$ admits a right adjoint $f^!$.
    \end{enumerate}
Then the $3$-functor formalism is a $6$-functor formalism.         
	\end{theorem}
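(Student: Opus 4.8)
The plan is to realize $\D_{(\Ca,\E)}$ as the diagonal of a multisimplicial functor obtained from $\D^{*\otimes}$ by taking adjoints along the ``open'' and ``proper'' directions and then gluing via compactification, with the whole argument run over $N(\op{Fin}_*)$ so that the symmetric monoidal structure is retained throughout. Concretely, I would target a functor
\[ G : \dd^*_3\Ca^{\op{cart}}_{\P,\I,\op{ALL}} \to \op{Cat}_{\infty} \]
encoding $f^*$ along the $\op{ALL}$-direction, $f_{\#}$ along the $\I$-direction, and $f_*$ along the $\P$-direction, with every cartesian square sent to a base-change (adjointable) square. Feeding $G$ into the compactification theorem \cref{compthmalledge} (with $\E_1=\P$, $\E_2=\I$) produces a functor on $\dd^*_2\Ca_{\E,\op{all}}$, and the comparison with correspondences \cref{correspondencebisimplicialsetsequivalence.}, together with the example $\CrrCpEal \cong \dd^*_{2\nabla}(\op{op}^2_{\{2\}}\Ca^{\op{cart}}_{\E,\op{all}})$, transports it to the desired $\D_{(\Ca,\E)}$ on $\CrrCpEal$; by construction $f_!=f_{\#}$ for $f\in\I$ and $f_!=f_*$ for $f\in\P$, since compactifying an $\I$-morphism (resp.\ a $\P$-morphism) is trivial on the $\P$-side (resp.\ the $\I$-side).

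To build $G$, I would start from $\D^*$ precomposed with the diagonal-composition map $\dd^*_3\Ca^{\op{cart}}_{\P,\I,\op{ALL}} \to \Ca$, so that initially all three directions carry $f^*$, and then apply the partial adjoints theorem \cref{prtadj} twice. First with $J=\{\P\}$, taking right adjoints: hypothesis (1) of \cref{prtadj} holds because $f^*$ admits the right adjoint $f_*$ for $f\in\P$, and hypothesis (2) --- right adjointability of the mixed $\P\times\I$ and $\P\times\op{ALL}$ squares --- is exactly $\P$-base change. Second, with $J=\{\I\}$ and the left-adjoint variant of \cref{prtadj} (obtained by passing to opposite categories): existence of $f_{\#}$ is assumption (1), left adjointability of the $\I\times\op{ALL}$ squares is $\I$-base change, and left adjointability of the $\I\times\P$ squares --- which now involve the already-constructed $f_*$ --- is precisely the support property, whose content is the commuting square \cref{8.6} relating $f_{\#}$ and $g_*$ (the mate of the square we must adjoint). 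Thus assumptions (1)--(3) are tailored exactly to the two adjoint passes, the single mixed $\I\times\P$ square doing double duty; and the conclusions of \cref{prtadj} guarantee the base-change squares persist.

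For the gluing I would verify the hypotheses of \cref{compthmalledge}: the factorization $f=\bar f\circ j$ with $\bar f\in\P$, $j\in\I$ is the Nagata decomposition, the $k$-truncatedness of $\I\cap\P$ is the fourth Nagata axiom, and admissibility of $\I$, $\P$ and $\E$ follows from the geometric-setup axioms together with the cancellation property (Nagata condition 3). The operadic upgrade is obtained by rerunning the entire argument with $\Ca$ replaced by $\Ccc$ and $\CrrCpEal$ by $\op{Corr}(\Ca)^{\otimes}_{\E,\op{all}}=\op{Corr}(\Ccc)_{\E^{-},\op{all}}$: here the coCartesian edges classified in \cref{coCaredginCor} encode the tensor product, and the two projection formulas (assumptions (1a) and (2a)) are precisely the adjointability of the squares mixing a geometric edge with an active/tensor edge, so the same partial-adjoints-plus-compactification machinery applies relative to $N(\op{Fin}_*)$ and yields the morphism of $\infty$-operads $\D_{(\Ca,\E)} : \op{Corr}(\Ca)^{\otimes}_{\E,\op{all}} \to \op{Cat}^{\otimes}_{\infty}$.

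Finally, for the $6$-functor upgrade I would check the three adjunctions directly: $\otimes$ has right adjoint $\uhom$ because each $\D(X)$ is closed (assumption (1)); $f^*$ has right adjoint $f_*$ by assumption (2); and for $f\in\E$ with $f_!\simeq \bar f_*\circ j_{\#}$, a right adjoint is $j^*\circ \bar f^{!}$, where $\bar f^{!}$ is the right adjoint of $\bar f_*$ furnished by assumption (3) and $j^*$ is automatically right adjoint to $j_{\#}$. Hence $\D_{(\Ca,\E)}$ is a $6$-functor formalism. I expect the main obstacle to be the operadic bookkeeping of the penultimate step --- keeping the variances of the three directions straight through both adjoint passes and checking that the projection formulas supply exactly the adjointability needed over $N(\op{Fin}_*)$ --- together with ensuring that the partially-adjointed multisimplicial set $\dd^*_{I,\{\I,\P\}}R$ produced by iterating \cref{prtadj} matches the domain of \cref{compthmalledge} on the nose.
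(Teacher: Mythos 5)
Your proposal follows essentially the same route as the paper's proof: pull $\D^{*\otimes}$ back to the three-direction multisimplicial set, apply the partial adjoints theorem first in the $\P$-direction (using $\P$-base change and the $\P$-projection formula, the latter being exactly what the paper isolates in \cref{projectionformulabasechangeadjunction} for the operadic/K\"unneth squares) and then in the $\I$-direction (using $\I$-base change, the $\I$-projection formula and the support property), glue via \cref{compthmalledge}, identify the result with $\op{Corr}(\Ca)^{\otimes}_{\E,\op{all}}$ via \cref{correspondencebisimplicialsetsequivalence.}, and read off the remaining three adjoints for the $6$-functor upgrade. The only cosmetic difference is that the paper runs every step directly on $\Ccc$ over $N(\op{Fin}_*)$ rather than first treating the underlying categories and then redoing the argument operadically, and your explicit appeal to the left-adjoint variant of \cref{prtadj} for the $\I$-direction is, if anything, more careful about variance than the paper's own wording.
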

\begin{remark}
The above theorem is the content of \cite[Section 3.2]{liu2017enhanced}.
\end{remark}

    \begin{proof}
First we notice that once $\D_{(\Ca,\E})$ is a $3$-functor formalism, the additional conditions in the last part of theorem do provide the other three functors. Thus it boils down to proving the existence of $3$-functor formalism with desired properties as mentioned. Let us begin the steps of constructing the abstract 6FF formalism : 

\subsection*{\textbf{Step 1:}} We have the functor :
\begin{equation}
    \D^{\otimes*} : \Ca^{\op{op}} \to \op{CAlg}(\op{Pr}^L_{\op{st},\op{cl}}) \to \op{CAlg}(\op{Cat}_{\infty})
    \end{equation}

We have the following chain of compositions :
\begin{equation}
    \dd^*_{3,\{1,2,3\}}\Ca^{\op{cart}}_{\P,\O,\op{all}} \to \dd^*_{2,\{1,2\}}\Ca^{\op{cart}}_{\E,\op{all}} \to \dd^*_{1,\{1\}}\Ca^{\op{cart}}= \Ca^{op}
\end{equation}
Composing this chain of compositions with $\D^{\otimes*}$ we have the map : 

\begin{equation}
    \D_1 : \dd^*_{3,\{1,2,3\}}(\Ccc)^{\op{cart}}_{\P,\O,\op{all}} \to \op{CAlg}(\op{Cat}_{\infty})
\end{equation}
By universal property of $\op{CAlg}$ and $(-)^{\coprod}$, we have 
\begin{equation}
    \D_1 : \dd^*_{3,\{1,2,3\}}(\Ccc)^{\op{cart}}_{\P,\O,\op{all}} \to \op{Cat}_{\infty}
\end{equation}
Here we note that $\P,\O$ denotes sets of edges of the form $(X_1,X_2,..X_n) \to (Y_1,Y_2,...Y_n)$ where $X_i  \to Y_i$ are in $\P$ or $\O$ respectively.  

\subsection*{\textbf{Step 2:}}

We apply the theorem of partial adjoints in direction 1. In order to do this, we have the following lemma :

\begin{lemma}\label{projectionformulabasechangeadjunction}
    Let $\alpha : \langle m \rangle \to \langle n \rangle$ be a morphism in $\op{Fin}_*$. Consider the pullback square in $(\Ca^{\op{op}})^{\coprod,\op{op}}$
    \begin{equation}
        \begin{tikzcd}
            (X_1,X_2,...X_n) \arrow[d,"f'"]\arrow[r,"g"] & (Y_1,Y_2,...Y_m) \arrow[d,"f"]\\
            (X_1',X_2',..X_n') \arrow[r,"g'"] & (Y_1',Y_2'...Y_m')
        \end{tikzcd}
    \end{equation}
    which consists for every $1 \le i \le n$, the pullback squares in $\Ca$ :
    \begin{equation}
    \begin{tikzcd}\label{pullbacksquaresforkunneth}
        X_i\arrow[d,"f_i"] \arrow[r,"g_i"] & \prod_{\alpha(j)=i}Y_j \arrow[d,"\prod f_j"] \\
        X_i' \arrow[r,"g'_i"]  & \prod_{\alpha(j)=i} Y'_j 
        \end{tikzcd}
    \end{equation}
    where $f_i'$ amd $\prod f_j$ are in $\P$, then the square :
    \begin{equation}
        \begin{tikzcd}
            \prod_{i=1}^n \D(X_i) & \prod_{j=1}^m \D(Y_j) \arrow[l] \\
            \prod_{i=1}^n \D(X_i') \arrow[u] & \prod_{j=1}^m \D(Y_j') \arrow[l]\arrow[u]
        \end{tikzcd}
    \end{equation}
    is right adjointable on the vertical arrows using the pushforward map $(-)_*$.
\end{lemma}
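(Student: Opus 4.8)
The plan is to reduce this ``multiplicative'' statement to the single-object $\P$-base change hypothesis \cref{8.4} together with a Künneth-type compatibility of pushforward with external products. Recall from Step~1 that $\D_1$ sends an object $(X_1,\dots,X_n)$ of $\Ccc$ to the product $\prod_{i=1}^n\D(X_i)$, and that a morphism over $\alpha\colon\langle m\rangle\to\langle n\rangle$ is assembled fibrewise over $N(\op{Fin}_*)$ from its components: on the factor indexed by $i$, the horizontal functor $g^*$ is the composite $\prod_{\alpha(j)=i}\D(Y_j)\xrightarrow{\boxtimes}\D(\prod_{\alpha(j)=i}Y_j)\xrightarrow{g_i^*}\D(X_i)$, while the vertical functor is the componentwise pullback $\prod_{\alpha(j)=i}f_j^*$. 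Hence the square of the lemma splits as a product, over $1\le i\le n$, of squares. Since a finite product of right-adjointable squares is again right-adjointable (the right adjoint being the product of the right adjoints, which exist because every leg we invert lies in $\P$ and hypothesis (2) of \cref{mainconstructiontheorem} furnishes $f_*$), it suffices to treat a single value of $i$.

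Fixing $i$, I would factor the component square as the horizontal paste
\[
\begin{tikzcd}[column sep=large]
\prod_{\alpha(j)=i}\D(Y_j) \arrow[r,"\boxtimes"] \arrow[d] & \D(\prod_{\alpha(j)=i}Y_j) \arrow[r,"g_i^*"] \arrow[d] & \D(X_i) \arrow[d] \\
\prod_{\alpha(j)=i}\D(Y_j') \arrow[r,"\boxtimes"] & \D(\prod_{\alpha(j)=i}Y_j') \arrow[r,"g_i'^*"] & \D(X_i')
\end{tikzcd}
\]
whose middle vertical arrow is $(\prod_{\alpha(j)=i}f_j)^*$. The right-hand square is precisely the image under $\D^*$ of the given cartesian square in $\Ca$, whose two vertical legs $f_i$ and $\prod_{\alpha(j)=i}f_j$ both lie in $\P$: the latter because $(\Ca,\P)$ is a geometric setup and so $\P$ is closed under finite products, and the former as its base change along $g_i'$. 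Therefore \cref{8.4} applies (right-adjointability is a property of the chosen pair of parallel $\P$-edges, independent of how the square is oriented) and exhibits the right-hand square as right-adjointable via $(-)_*$.

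The remaining, and essential, point is the right-adjointability of the left-hand Künneth square, i.e. the identity
\[
\boxtimes_{\alpha(j)=i}\,(f_j)_*B_j \;\simeq\; \Big(\prod_{\alpha(j)=i}f_j\Big)_*\big(\boxtimes_{\alpha(j)=i}B_j\big),\qquad B_j\in\D(Y_j).
\]
This is where the work lies, and I expect it to be the main obstacle. I would argue by induction on the number of factors, writing $\prod f_j$ as a composite of maps of the form $\op{id}\times f_k\times\op{id}$; each such map is the base change of the single $\P$-morphism $f_k$ along a projection, so applying $\P$-base change \cref{8.4} and the $\P$-projection formula \cref{proppro} at each stage identifies the external pushforward of a product with the product of the pushforwards. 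The delicate part is not producing an abstract isomorphism but checking that the equivalences supplied by \cref{8.4} and \cref{proppro} are the \emph{canonical} Beck--Chevalley maps, so that the composite transformation in the pasted square is genuinely the unit--counit map one is obliged to invert.

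Finally, the pasting lemma for right-adjointable squares shows that the horizontal composite of the Künneth square and the base-change square is right-adjointable; taking the product over $1\le i\le n$ recovers the right-adjointability of the square in the statement, which completes the proof. Apart from the Künneth identity of the third paragraph, everything reduces to bookkeeping with products, the various $\op{op}$'s, and the single-morphism hypotheses already assumed in \cref{mainconstructiontheorem}.
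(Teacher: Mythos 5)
Your proposal is correct and follows essentially the same route as the paper's proof: reduce to a single component $i$ with $\alpha$ active, split the square horizontally into a K\"unneth square and a $\P$-base-change square, and establish the K\"unneth identity $(\prod f_j)_*(\boxtimes B_j)\simeq\boxtimes (f_j)_*B_j$ by factoring $\prod f_j$ into maps of the form $\op{id}\times f_k\times\op{id}$ and combining $\P$-base change along the projections with the $\P$-projection formula. The paper carries out exactly this induction (reducing to $m=2$ and then to $f_2=\op{id}$), so there is no substantive difference in approach.
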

\begin{proof}
    As map of finite sets decomposes over the images, it is safe to assume $n =1$ and $\alpha$ is active, we need to show that for pullback square of the form \cref{pullbacksquaresforkunneth}, we have 
    
    \begin{equation}
        \begin{tikzcd}[row sep=huge, column sep = huge]
            \D(X_i)  & \prod_{j=1}^m \D(Y_j) \arrow[l,"g_{1i}^* \otimes g_{2i}^* \cdots g_{ji}^*"] \\
            \D(X_i') \arrow[u,"f'^*_i"] & \prod_{j=1}^m \D(Y_j') \arrow[u,"\prod f_j^*"] \arrow[l,"g'^*_{1i} \otimes g'^*_{2i} \cdots g'^*_{ji}"]
        \end{tikzcd}
    \end{equation}
    where
    By induction, it suffices to assume for $m =2$, for $m=1$, it follows from $\P$-base change. Consider the following diagram :
  \begin{equation}
      \begin{tikzcd}
          \D(X_i) \arrow[d,"f_{i*}"] & \D(Y_1 \times Y_2) \arrow[l,"g_i^*"] \arrow[d,"(\prod f_{j})_*"] & \D(Y_1) \times \D(Y_2) \arrow[l,"- \boxtimes-"] \arrow[d," \prod f_{j*}"] \\
          \D(X_i') & \D(Y_1' \times Y_2') \arrow[l,"g_i^*"] & \D(Y_1') \times \D(Y_2') \arrow[l,"- \boxtimes -"]
      \end{tikzcd}
  \end{equation}
  The first square commutes via $\P$-base change, in order to show that the second square commute, as $(f_1 \times f_2)_* = (f_1 \times \op{id})_* \circ (\op{id} \times f_2)_*$, we assume $f_2 = \op{id}$. Consider the Cartesian square : 
  \begin{equation}
      \begin{tikzcd}
          Y_1 \times Y_2 \arrow[d,"f_1 \times \op{id}"] \arrow[r,"p_{Y_1}"] & Y_1 \arrow[d,"f_1"] \\
          Y_1' \times Y_2 \arrow[r,"p_{Y_1'}"] & Y_1'
      \end{tikzcd}
  \end{equation}
  The $\P$-base change gives us that : 
  \begin{equation}
      (f_1 \times \op{id})_* \circ p_{Y_1}^* \cong p_{Y_1'}^* \circ f_{1*}.
  \end{equation}

  For $M \in \D(Y_1), N \in \D(Y_2)$, we have
  \begin{align*}
       (f_1 \times \op{id})_*(M \boxtimes N) \\
      =(f_1 \times \op{id})_*( p_{Y_1}^*M \otimes p_{Y_2}^*N) \\
      = (f_1 \times \op{id})_*(p_{Y_1}^*M \otimes (f_1 \otimes \op{id})^*p_{Y_2}^*N) \\
      = (f_1 \times \op{id})_*(p_{Y_1}^*M) \otimes p_{Y_2}^*N  \quad (\P-\text{projection formula})\\
      = p_{Y_1}^*f_{1*}M \otimes p_{Y_2}^*N \\
      = f_{1*}M \boxtimes N
  \end{align*}
  This completes the proof of the second square is commutative and hence the bigger square is commuative which completes the proof of the lemma.
\end{proof}
The above lemma verifies the conditions of \cref{prtadj} applied to direction $1$ as we have $\P$-base change and $\P$-projection formula, this gives us the the map :
\begin{equation}
    \D_2 : \dd^*_{3,\{2,3\}}(\Ccc)^{\op{cart}}_{\P,\O,\op{all}} \to \op{Cat}_{\infty}
\end{equation}
The map $\D_2$ sends arrows in direction of $\P$ to $(-)_*$ and the remaining to pullbacks. 
\subsection*{\textbf{Step 3:}}

Now we apply partial adjoints (\cref{prtadj}) to the direction $2$. Using the lemma (\cref{projectionformulabasechangeadjunction}) instead of $\P$ applied to morphisms in $\O$, the $\O$-projection formula, $\O$-base change and support property verifies the condition of \cref{prtadj}. This then gives us the morphism :
\begin{equation}
    \D_3 : \dd^*_{3,\{3\}}(\Ccc)^{\op{cart}}_{\P,\O,\op{all}} \to \op{Cat}_{\infty}
\end{equation}
\subsection*{\textbf{Step 4:}}

Here we use the theory of $\infty$-categorical compactifications. By \cref{compthmalledge}, we can lift $\D_3$ to the functor
\begin{equation}
    \D_4 : \dd^*_{2,\{2\}}(\Ccc)^{\op{cart}}_{\E,\op{all}} \to \op{Cat}_{\infty}
\end{equation}

\subsection*{\textbf{Step 5:}} The last step is to identify this multisimplicial gadget with the $\infty$-category of correspondences.  This follows from \cref{correspondencebisimplicialsetsequivalence.}.
Thus this gives us the following map :
\begin{equation}
    \D_{\Ca,\E} : \op{Corr}(\Ca)^{\otimes}_{\E,\op{all}} \to \op{Cat}_{\infty}.
\end{equation}

By Proposition 2.4.1.7 of\cite{HA} which says that restriction map \[ \op{Fun}_{N(\op{Fin}_*)}(\CrrCopEal,\op{Cat}_{\infty}^{\otimes}) \to \op{Fun}^{\op{lax}}(\CrrCopEal,\op{Cat}_{\infty})\] is a trivial Kan fibration. Here the $\op{lax}$ superscript means all those functors $\pi: \CrrCopEal \to \op{Cat}_{\infty}$ such that for any object $(X_1,X_2,..X_n) \in \CrrCopEal$, we have $\pi(X_1,...X_n) = \prod_{i=1}^n \pi(X_i)$. This induces a lax-symmetric monoidal functor : 

\begin{equation}
    \D_{(\Ca,\E)} : \CrrCopEal \to \op{Cat}_{\infty}^{\otimes}.
\end{equation}
\end{proof}

\begin{remark}\label{alternateconstructionremarkMann}
As mentioned in the introduction, there is an ongoing work in progress by Mann, Heyer and Perutka of proving similar construction results of abstract six-functor formalisms (\cref{mainconstructiontheorem}) using the language of $(\infty,2)$-categories. As communicated to the author via email, the idea is to enhance the $(\infty,1)$-category of correspondences to $(\infty,2)$-level which shall encode morphisms lying in $\I$ and $\P$ on the level of $2$-morphisms. This enhancement allows to reformulate abstract six-functor formalisms and such constructions can be possibly carried out in the $(\infty,2)$-setup.
\end{remark}
\section{Extending Six-Functor Formalisms.}
In this subsection, we extend six functor formalism from smaller geometric setups to larger geometric setups. Similar extension results have been proven in DESCENT algorithms of Liu-Zheng (\cite[Section 4]{liu2017enhanced}) and also proved by Mann in his thesis (\cite[Lemma A.5.11,Proposition A.5.14]{padic6functorlucasmann}). We provide proofs of these results using the theory of localizations (in particular \cref{localizationcriterion}).

\subsection{Extension along nice geometric pairs.}
\begin{definition}
An inclusion of two marked $\infty$-categories $(\Ca,\S,\E) \subset (\Ca',\S',\E')$ is a \textit{nice geometric pair} if the following conditions hold :
    \begin{enumerate}
        \item Each of the four pairs $(\Ca,\S),(\Ca,\E),(\Ca,\S')$ and $(\Ca',\E')$ are geometric setups.
        \item $\S' \cap \Ca_1 =\S$.
        \item For  $X' \in \Ca'$, there exists a morphism $x : X \to X'$ called an \textit{atlas} such that $X \in \Ca$ and for every $Y \to X'$ where $Y \in \Ca$, the base change $Y \times_{X'} X \to Y$ lies in $S$.
        \item For every $f : X' \to Y'$ in $\E'$ and for every atlas $y : Y \to Y'$, the base change morphism $X' \times_{Y'}Y \to Y$ is in $\E.$
    \end{enumerate}
    \end{definition}
    \begin{example}
        Let $\Ca= \op{Sch}$ be the category of schemes and $\Ca'= \op{Algst}$ be the $(2,1)$-category of algebraic stacks. Considering $\S$ and $\S'$ as smooth surjections of schemes andalgebraic stacks respectively along with $\E$ and $\E'$ be representable morphisms of locally of finite type of schemes and algebraic stacks respectively, we see that $(\Ca,\S,\E) \subset (\Ca',\S',\E')$ is a nice geometric pair.
    \end{example}
\begin{proposition}\label{extendingsixfunctorCside}
   Let $(\Ca,\S,\E) \subset (\Ca',\S',\E')$ be a nice geometric pair. Let 
   \begin{equation}
       \D_{( \Ca,\E )} : \op{Corr}(\Ca)^{\otimes}_{\E,\op{all}} \to \op{Cat}^{\otimes}_{\infty}
   \end{equation}
   be a six-functor formalism such that the functor :
    \begin{equation*}
        \D^{*\otimes} : \C^{\op{op}} \to\op{CAlg}(\op{Pr}^L)
    \end{equation*}
    satisfies descent for $\S$-\v{C}ech-covers.
Then $\D_{(\Ca,\E)}$ can be extended to a six-functor formalism :
\begin{equation}
    \D_{(\Ca',\E')} : \op{Corr}(\Ca)^{\otimes}_{\E',\op{all}} \to \op{Cat}^{\otimes}_{\infty}.
\end{equation}
\end{proposition}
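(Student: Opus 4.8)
The plan is to realize $\op{Corr}(\Ca')^{\otimes}_{\E',\op{all}}$ as a Dwyer--Kan localization of an auxiliary correspondence $\infty$-category assembled from $\S$-\v{C}ech nerves of atlases in $\Ca$, and then to descend the given formalism $\D_{(\Ca,\E)}$ through this localization by means of \cref{localizationcriterion}. The descent hypothesis on $\D^{*\otimes}$ is exactly what will force $\D_{(\Ca,\E)}$ to invert the morphisms we localize at, so that the universal property of the localization hands us the extension $\D_{(\Ca',\E')}$ essentially for free.

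First I would set up the presentation of the objects of $\Ca'$ by atlases. For $X' \in \Ca'$ I fix an atlas $x \colon X_0 \to X'$ with $X_0 \in \Ca$ and form its \v{C}ech nerve $X_{\bullet} = \check{C}(X_0/X')$. Each face map of $X_{\bullet}$ is a base change of $x$ along a morphism $X_{n-1} \to X'$ with source in $\Ca$, hence lies in $\S$ by axiom (3); together with $\S' \cap \Ca_1 = \S$ this shows $X_{\bullet}$ is a simplicial object of $\Ca$ compatible with the markings, whose realization (the colimit along $\Delta^{\op{op}}$) is $X'$. Using the operadic functoriality maps of the correspondence construction (\cref{Corroperadfunctorial} and \cref{Corroperadfunctorialdual}) with $K$ the simplex category, I would organize these nerves into diagrams valued in $\op{Corr}(\Ca)^{\otimes}_{\E,\op{all}}$ and assemble them into a single auxiliary category carrying a functor $q$ to $\op{Corr}(\Ca')^{\otimes}_{\E',\op{all}}$. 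The base change axiom (condition (4)) is what lets me present a morphism $f \colon X' \to Y'$ in $\E'$ levelwise by morphisms of $\E$ after passing to compatible atlases, so that $q$ is defined on the exceptional edges as well.

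Next I would produce the coefficient functor on the auxiliary category by composing $\D_{(\Ca,\E)}$ with these $\alpha$-maps and taking the limit over $\Delta$; on objects its value is $\lim_{[n]\in\Delta}\D(X_n)$. The role of the descent hypothesis is that two atlases of the same $X'$, compared through the \v{C}ech nerve of their fibre product over $X'$ (whose indexing category is sifted), yield canonically equivalent limits, so this functor carries the class $W$ of atlas-comparison maps to equivalences. Feeding this into \cref{localizationcriterion} should identify the $W$-localization of the auxiliary category with $\op{Corr}(\Ca')^{\otimes}_{\E',\op{all}}$ and simultaneously factor $\D_{(\Ca,\E)}$ through it, yielding the lax symmetric monoidal $\D_{(\Ca',\E')}$ restricting to $\D_{(\Ca,\E)}$ on $\Ca$. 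For the upgrade to a $6$-functor formalism, the right adjoints $f_*, f^!, \uhom(-,-)$ would be built levelwise on atlases and assembled over $\Delta$, the essential input being that adjointable squares are stable under limits, i.e. that $\op{Fun}^{\op{RAd}}$ is closed under small limits (\cref{adjsquarethm}).

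The hard part will be the localization claim itself: verifying that $q$ genuinely exhibits a Dwyer--Kan localization, rather than merely a functor that happens to invert $W$. Concretely this means checking that the relevant \v{C}ech-nerve indexing categories are weakly contractible, so that plain $\S$-\v{C}ech descent (and not full hypercover descent, which is what the exceptional case \cref{extendingsixfunctorEside} instead requires) suffices, and carrying the whole argument out compatibly with both the symmetric monoidal structure and the exceptional direction $\E'$ by constantly trading $\E'$-morphisms for $\E$-morphisms via condition (4). Everything else should reduce to bookkeeping with the machinery already established.
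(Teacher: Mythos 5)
Your proposal follows essentially the same route as the paper: the paper forms the category $\op{Cov}(\Ca)$ of \v{C}ech nerves of atlases, shows via \cref{localizationcriterion} (surjectivity on simplices plus fibre products of atlases over a fixed $X'$) that the induced correspondence category localizes onto $\op{Corr}(\Ca')^{\otimes}_{\E',\op{all}}$ at the atlas-comparison maps $R$, builds the coefficient functor as $\lim_{\Delta}\circ\,\D_{(\Ca,\E)}\circ\alpha'$, and uses $\S$-\v{C}ech descent together with the two-out-of-three property (comparing two atlases through their fibre product) to see that $R$ is inverted, exactly as you outline. The only minor discrepancy is your framing of the "hard part": the localization claim is not settled by weak contractibility of indexing categories but precisely by the two conditions of \cref{localizationcriterion} just mentioned, which your own setup already supplies.
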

\begin{proof}
    We denote by $\op{Cov}(\Ca)$ the full subcategory of $\op{Fun}(N(\Delta)^{op},\Ca)$ spanned by \v{C}ech nerves of the atlases of elements $\Ca'$.An element of $\op{Cov}(\Ca)$ is given by $(x: X \to X',X')$. We have a canonical morphism 
    \begin{equation*}
        p : \CrrCovopEal \to \op{Corr}(\Ca')^{\otimes}_{\E',\op{all}}
    \end{equation*}
    Let $R$ be a collection of morphisms of the form: 
    \begin{equation*}
        \begin{tikzcd}
            (x_1 : X_1 \to X_1', X_1;\cdots; x_n : X_n \to X_n', X_n') \arrow[r,"f"] \arrow[d,"\op{id}"] &(y_1: Y_1 \to X_1', X_1';\cdots ; y_n : Y_n \to X_n', X_n') \\
            (x_1 : X_1 \to X_1', X_1;\cdots; x_n : X_n \to X_n', X_n') & {}.
        \end{tikzcd}
    \end{equation*} where $f=(f_i)_{i=1}^n$  and $f_i$ is a morphism of \v{C}ech nerves of $X_i$ for all $i$. The morphism $p$ sends $R$ to an equivalences. We have the following claim :
    \begin{claim}\label{claimforCorrCC'}
        The morphism $p$ is a localization of $\CrrCovopEal$ along $R$.
    \end{claim}
\begin{proof}[Proof of \cref{claimforCorrCC'}]
    It is enough to check the conditions of \cref{localizationcriterion}. First of all, the existence of atlases imply that $p$ is surjective on $n$-simplices. \\
    Second of all, if $(x: X \to X',X')$ and $(y: Y \to X',X')$ are two objects over $X'$, then we have the product of these two objects namely$( z: X \times _{X'} Y \to X',X')$.
Hence the conditions of \cref{localizationcriterion} are verified proving the claim.

\end{proof} 
We construct a morphism $\phi_{\Ca\Ca'} :\CrrCovopEal \to \op{Cat}_{\infty}^{\otimes}$ as follows:
\begin{itemize}
    \item The map $\alpha^{'\otimes}_{(\Delta_+)^{\op{op}},\Ca',\E'}$ (\cref{Corroperadfunctorialdual} provides a morphism :
    \begin{equation*}
        \phi_1 : \CrrCovopEal \xrightarrow{\alpha'_{(\Delta_+)^{\op{op}},\Ca',\E'}} \op{Fun}(N(\Delta_+),\op{Corr}(\Ca')^{\otimes}_{\E',\op{all}}) \xrightarrow{\op{res}} \op{Fun}(N(\Delta),\op{Corr}(\Ca)^{\otimes}_{\E,\op{all}})
    \end{equation*}
    where the second map is just restricting the augmented simplicial objects to the simplicial object. The simplicial object maps to $\CrrCpEal$ follows from the conditions in the proposition.
    \item The map $\D_{\Ca,\E}$ induces a functor :
    \begin{equation*}
       \phi_2: \op{Fun}(N(\Delta),\CrrCopEal) \to \op{Fun}(N(\Delta),\op{Cat}_{\infty}^{\otimes})
    \end{equation*}
    \item Taking limits of simplicial diagrams and using the theory of Kan extensions (\cite[Proposition 4.3.2.15]{HTT}, we have a limit functor :
    \begin{equation*}
        \phi_3 : \op{Fun}(N(\Delta),\op{Cat}_{\infty}^{\otimes}) \to \op{Cat}_{\infty}^{\otimes}
    \end{equation*}
    \item We define :
    \begin{equation*}
        \phi_{\Ca\Ca'}:= \phi_3 \circ \phi_2 \circ \phi_1.
    \end{equation*}
    In particular the morphism $\phi$ sends an object $(x_1: X_1 \to X_1', X_1';\cdots;x_n : X_n \to X_n',X_n')$ to the element $\Pi_{i=1}^n \op{lim}_{\bb \in \Delta} \D(X_{i,\bb})$.
\end{itemize}

We want to use that fact that as $p$ is a localization along $R$, then $\phi_{\Ca\Ca'}$ descends to a functor $\D_{\Ca'.\E'}$. For this we show the following claim :
\begin{claim}
    The functor $\phi_{\Ca\Ca'}$ sends $R$ to equivalences.
\end{claim}
\begin{proof}[Proof of the claim]
    Let $\X:= (X' \to X, X)$ and $\Y = (Y' \to X,X)$ be  two objects in $\op{Cov}(\Ca)$ and $f$ be a morphism in $\op{Cov}(\Ca)$ which induces an element of $R$. Let $\X \times \Y := (X' \times_{X} Y' \to X,X)$ be the product of $\X$ and $\Y$. We have the following diagram in $\op{Cov}(\Ca)$ :
    
    	\begin{center}
    	\begin{tikzpicture}[baseline={(0,1)}, scale=2]

    		\node (a) at (0,1) {$ \mc X\times \mc Y $};
    		\node (b) at (1, 1) {$ \mc Y $};
    		\node (c)  at (0,0.5) {$  \mc X $};
    		\node (e) at (0.2,0.75) {$  $};
    		\node (f) at (-0.75,1.5) {$ \mc X $};
    		\node (g) at (0.5,0.5) {$  $};

    		\path[font=\scriptsize,>= angle 90]

    		(a) edge [->] node [above ] {$ p_2 $} (b)
    		(a) edge [->] node [left] {$ p_1 $} (c)

    		(f) edge [bend right=-30,->] node [below] {$ f $} (b)
    		(f) edge [bend left=-30, double equal sign distance] node [below] {$  $} (c)
    		(f) edge [ ->] node [above] {$  $} (a);
    	\end{tikzpicture}
    \end{center}
    
    \noindent which induces a diagram in $\CrrCovEopEal$ via the map:
    \[\op{Cov}(\Ca)^{\op{op}\coprod} \to \CrrCovopEal\]

As $\D^{*\otimes}$ satisfies descent along $S$-\v{C}ech covers, we see that $\phi_{\Ca\Ca'}$ sends $p_1$ and $p_2$ to 
 equivalences. Using the two out of three property of equivalences, we see that $\phi_{\Ca\Ca'}$ sends $f$ to equivalences.
 \end{proof}
 \textit{Back to the proposition:} As $p'$ is a localization along $R$ and $\phi_{\Ca\Ca'}$ sends $R$ to equivalences, we get that $\phi_{\Ca\Ca'}$ descends to a map :
 \begin{equation*}
     \D_{(\Ca',\E')} : \op{Corr}(\Ca')^{\otimes}_{\E',\op{all}} \to \op{Cat}^{\otimes}_{\infty}.
 \end{equation*}
 
 \end{proof}
 \subsection{Extension along exceptional pairs.}

 \begin{definition}
 An inclusion of $2$-marked $\infty$-categories $(\Ca,\S,\E) \subset (\Ca,\S,\E')$ is an \textit{exceptional pair} if the following conditions are satisfied :
    \begin{enumerate}
        \item The pairs $(\Ca,\S),(\Ca,\E),(\Ca,\E')$ are geometric setups.
        \item $\S \subset \E$.
        \item For every $f: X \to Y$ in $\E'$, there exists a morphism of augmented simplicial objects 
    \begin{equation*}
        f_{\bb} : X_{\bb} \to Y_{\bb}
        \end{equation*}
        where 
        \begin{itemize}
            \item $f_{-1}=f$
            \item $f_n \in E$ for $n \ge 0$,
            \item $X_{\bb} \to X$ and $Y_{\bb} \to Y$ are $\S$-hypercovers.
        \end{itemize}
        \end{enumerate}
        \end{definition}
        \begin{example}
         Let $\Ca$ be the category of schemes and $\E$ be the collection of morphisms which are separated and of finite type. Let $\S$ be the collection of Zariski covers. It follows from the definition that any morphism $f: X \to Y$ which is locally of finite type  admits a morphism of augmeneted simplicial objects $f_{\bb} : X_{\bb} \to Y_{\bb}$ where $X_{\bb}$ and $Y_{\bb}$ are Zariski hypercovers and $f_n$ for $n \ge 0$ is separated and of finite type. Thus letting $\E'$ to be the locally of finite type morphisms shows that $(\Ca,\S,\E) \subset (\Ca,\S',\E')$ is an exceptional pair.
        \end{example}
 \begin{proposition}\label{extendingsixfunctorEside}
   Let $(\Ca,\S,\E) \subset (\Ca,\S,\E')$ be an exceptional pair. Let 
   \begin{equation}
       \D_{( \Ca,\E )} : \op{Corr}(\Ca)^{\otimes}_{\E,\op{all}} \to \op{Cat}^{\otimes}_{\infty}
   \end{equation}
   be a six-functor formalism such that the functor
    \begin{equation*}
        \D_! : \C_E \to \op{Pr}^L
    \end{equation*}
    satisfies codescent for $\S$-hypercovers.
  
Then $\D_{(\Ca,\E)}$ can be extended to a six-functor formalism :
\begin{equation*}
    \D_{(\Ca,\E')} : \op{Corr}(\Ca)^{\otimes}_{\E',\op{all}} \to \op{Cat}^{\otimes}_{\infty}.
\end{equation*}
\end{proposition}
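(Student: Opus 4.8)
The plan is to mirror the proof of \cref{extendingsixfunctorCside} step by step, exchanging the roles of pullback and exceptional pushforward and replacing limits by colimits. Where the nice-geometric-pair argument resolved every object of $\Ca'$ by the \v{C}ech nerve of an atlas, here I would resolve every morphism $f \in \E'$ by the augmented simplicial object $f_{\bb} : X_{\bb} \to Y_{\bb}$ supplied by the exceptional-pair axiom, whose components $f_n$ for $n \ge 0$ lie in $\E$ and whose augmentations $X_{\bb} \to X$ and $Y_{\bb} \to Y$ are $\S$-hypercovers. Concretely, I would let $\op{Cov}_{\E}(\Ca)$ be the full subcategory of $\op{Fun}(N(\Delta_+)^{\op{op}},\Ca)$ spanned by the $\S$-hypercovers $X_{\bb} \to X$ whose face maps lie in $\E$, form the operadic correspondence category $\CrrCovEopEal$, and note that forgetting the hypercover while remembering the augmentation with its $\E'$-correspondence structure yields a canonical morphism
\begin{equation*}
    p : \CrrCovEopEal \to \op{Corr}(\Ca)^{\otimes}_{\E',\op{all}}.
\end{equation*}

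Next I would show that $p$ is a localization along the class $R$ of morphisms of $\CrrCovEopEal$ sent by $p$ to equivalences, that is, the morphisms of hypercovers inducing the identity on the underlying $\E'$-augmentation. This amounts to verifying the hypotheses of \cref{localizationcriterion}. Surjectivity on $n$-simplices is precisely the existence of hypercover resolutions of $\E'$-morphisms guaranteed by the exceptional-pair axiom, and the required existence of products reduces to the statement that two $\S$-hypercovers of the same object admit a common refinement, namely their levelwise fibre product, which is again an $\S$-hypercover with $\E$-components since $\S$ and $\E$ are stable under pullback. This is the dual of the step in \cref{extendingsixfunctorCside} where the product $X \times_{X'} Y$ of two atlases was used.

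I would then construct the functor $\psi : \CrrCovEopEal \to \op{Cat}^{\otimes}_{\infty}$ as a composite of three pieces dual to $\phi_1,\phi_2,\phi_3$ of \cref{extendingsixfunctorCside}: first the covariant functoriality map $\alpha^{\otimes}_{N(\Delta_+)^{\op{op}},\Ca,\E}$ of \cref{Corroperadfunctorial} followed by restriction from augmented to non-augmented simplicial objects, landing in $\op{Fun}(N(\Delta)^{\op{op}},\op{Corr}(\Ca)^{\otimes}_{\E,\op{all}})$; then post-composition with $\D_{(\Ca,\E)}$; and finally the geometric-realization colimit functor $\op{Fun}(N(\Delta)^{\op{op}},\op{Cat}^{\otimes}_{\infty}) \to \op{Cat}^{\otimes}_{\infty}$. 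On objects $\psi$ sends a tuple of hypercovers to $\prod_i \op{colim}_{\bb \in \Delta^{\op{op}}} \D_!(X_{i,\bb})$, so that $f_!$ for $f \in \E'$ is computed as the colimit of the exceptional pushforwards $(f_n)_!$ along a resolution.

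The crucial step, which I expect to be the main obstacle, is showing that $\psi$ carries $R$ to equivalences; this is exactly where the codescent hypothesis on $\D_!$ enters. Given two hypercovers of the same object, I would compare them through the common refinement produced above, and codescent along $\S$-hypercovers forces the two projections to become equivalences after applying $\psi$, so that the two-out-of-three property handles the original morphism of $R$, just as in \cref{extendingsixfunctorCside}. Once this is in place, the localization property of $p$ gives a unique factorization of $\psi$ through $p$, which is the desired extension
\begin{equation*}
    \D_{(\Ca,\E')} : \op{Corr}(\Ca)^{\otimes}_{\E',\op{all}} \to \op{Cat}^{\otimes}_{\infty}.
\end{equation*}
The remaining work is bookkeeping: one must check that the colimit functor respects the operadic structure and that $\psi$ restricts to $\D_{(\Ca,\E)}$ on $\op{Corr}(\Ca)^{\otimes}_{\E,\op{all}}$, both of which follow formally since an object already in the $\E$-world admits the constant hypercover, whose realization recovers it.
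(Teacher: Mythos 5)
Your proposal follows the paper's proof essentially verbatim: the same auxiliary category $\op{Cov}_{\E}(\Ca)$ of $\S$-hypercovers with $\E$-components, the same localization $p_{\E}$ along morphisms of hypercovers verified via \cref{localizationcriterion} (surjectivity from the exceptional-pair axiom, products from levelwise fibre products), the same three-step composite through $\alpha^{\otimes}$, $\D_{(\Ca,\E)}$, and the colimit functor, and the same use of codescent plus two-out-of-three to kill the localized morphisms. The approach and all key steps coincide with the paper's argument.
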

\begin{proof}
    The proof of this proposition follows the similar ideas of the proof of the \cref{extendingsixfunctorCside}.\\

    Let $\op{Cov}_{\E}(\Ca)$ be the full-subcategory of $\op{Fun}(N(\Delta)^{op},\Ca)$ spanned by simplicial objects which are $S$-hypercovers and spanned by $1$-simplices $f_{\bb} : X _{\bb} \to Y_{\bb}$ where if $f_{-1} \in \E'$, then $f_n \in \E$ for all $n \ge 0$.\\

    We have a canonical morphism :
    \begin{equation*}
        p_{\E} : \CrrCovEopEal \to \op{Corr}(\Ca)^{\otimes}_{\E',\op{all}}
    \end{equation*}
Let $R_{\E}$ be the collection of morphisms of the form :
\begin{equation*}
        \begin{tikzcd}
            (x_1 : X_1 \to X_1', X_1;\cdots; x_n : X_n \to X_n', X_n') \arrow[d,"f"] \arrow[r,"\op{id}"] &(x_1 : X_1 \to X_1', X_1;\cdots; x_n : X_n \to X_n', X_n')  \\
            (y_1: Y_1 \to X_1', X_1';\cdots ; y_n : Y_n \to X_n', X_n') & {}.
        \end{tikzcd}
    \end{equation*}
    where $f=(f_i)_{i=1}^n$ is a morphism of \v{C}ech nerves between $S$-hypercovers. Mimicking the ideas from the previous proposition and using \cref{localizationcriterion}, we see that $p_{\E}$ is a localization along $R_{\E}$. \\

    We construct a morphism 
    \begin{equation*}
        \phi_{\E\E'} : \CrrCovEopEal \to \op{Cat}^{\otimes}_{\infty}
    \end{equation*}
    as follows:
    \begin{itemize}
        \item The map $\alpha_{(\Delta_+)^{op},\Ca,\E'}$(\cref{Corrfunctorialmap}) induces a morphism :
        \begin{equation*}
            \phi_{1\E} : \CrrCovEopEal \to \op{Fun}(N(\Delta_+)^{\op{op}},\op{Corr}(\Ca)^{\otimes}_{\E',\op{all}}) \xrightarrow{\op{res}} \op{Fun}(N(\Delta)^{\op{op}},\CrrCopEal)
        \end{equation*}
        where the second map is restriction to $\CrrCopEal$ as morphisms in $\E'$ induce a morphism of augmented simplicial objects $f_{\bb} : X_{\bb} \to Y_{\bb}$ where $f_n \in \E$ for $n \ge 0$.
        \item The map $\D_{(\Ca,\E)}$ induces a functor :
        \begin{equation*}
            \phi_{2\E} : \op{Fun}(N(\Delta)^{\op{op}},\CrrCopEal) \to \op{Fun}(N(\Delta)^{\op{op}},\op{Cat}^{\otimes}_{\infty}).
        \end{equation*}
        \item Using the theory of Kan extensions (\cite[Proposition 4.3.2.15]{HTT}), we get a morphism :
        \begin{equation*}
            \phi_{3\E} : \op{Fun}(N(\Delta_+)^{\op{op}}, \op{Cat}^{\otimes}_{\infty}) \to \op{Cat}^{\otimes}_{\infty}.
        \end{equation*}
        \item We define :
        \begin{equation*}
            \phi_{\E\E'} := \phi_{3\E} \circ \phi_{2\E} \circ \phi_{1\E}
        \end{equation*}
        In particular, we see that $\phi_{\E\E'}(X_{\bb} \to X) \cong  \op{colim}_{\bb \in \Delta} \D_!(X_{\bb})$. In particular for any map $f: X \to Y$ in $\E'$ and $f_{\bb}: X_{\bb} \to Y_{\bb}$ a morphism of augmented simplicial objects where $f_n \in E$ , we see that \[\phi_{\E\E'}(f_{\bb}) \cong \op{colim}_{\bb \in \Delta}\D_!(f). \]
    \end{itemize}
    Again mimicking the ideas in the proof of \cref{extendingsixfunctorCside} and using that $\D_!$ has codescent along $\S$-hypercovers, we see that $\phi_{\E\E'}$ sends $R_{\E}$ to equivalences. As $p_{\E}$ is a localization and $\phi_{\E\E'}$ sends $R_{\E}$ to equivalences, $\phi_{\E\E'}$ descends to a morphism :
    \begin{equation*}
        \D_{(\Ca,\E')} : \op{Corr}(\Ca)^{\otimes}_{\E',\op{all}} \to \op{Cat}_{\infty}^{\otimes},
    \end{equation*}
\begin{remark}
\cref{extendingsixfunctorCside} and \cref{extendingsixfunctorEside} have been applied in various setups. Liu and Zheng have applied the DESCENT alogrithm (\cite[Section 4]{liu2017enhanced}) which is essentially a general reformulation of these statements in order to extend  six-functor formalism of \'etale cohomology from schemes to algebraic stacks. Another application of such statements can be found in joint work of the author with Alessandro D'Angelo (\cite{chowdhury2024nonrepresentablesixfunctorformalisms}) where extend the formalism of motivic homotopy theory from schemes to algebraic stacks. 
\end{remark}
    
\end{proof}

\begin{appendices}
\section{A criterion regarding localizations.}
In this section, we prove a proposition under what conditions a morphism of $\infty$-categories is a localization.
Let us recall the notion of Dwyer-Kan localizations.

\begin{definition}\cite[Definition 2.4.2]{land2021introduction}
Let $\Ca$ be an $\infty$-category and let $S \subset \Ca_1$  be a set of morphisms. A functor $\Ca \to \Ca[S^{-1}]$ is a \textit{Dwyer-Kan localization} of $\Ca$ along $S$ if for every auxiliary $\infty$-category $\D$ the functor :
\begin{equation*}
    \op{Fun}(\Ca[S^{-1}],\D) \to \op{Fun}(\Ca,\D)
\end{equation*}
    is fully-faithful and its essential image is all such functors $\Ca \to \D$ which sends $S$ to equivalences.
\end{definition}
\begin{remark}
    By \cite[Lemma 2.4.6]{land2021introduction}, localization exists along all morphisms of $\Ca$.
\end{remark}
\begin{proposition}\label{localizationcriterion}
    Let $\Ca$ be an $\infty$-category and $R$ be a set of morphisms in $\Ca$. Let $p : \Ca \to \D$ which sends $R$ to equivalences. Suppose we have the following conditions: 

\begin{itemize}
    \item $p$ is surjective on $n$-simplices for $n \ge 0$.
    \item For every $d \in \D$, the $\infty$-category $\Ca_d$ admits products. In particular $x,y \in \Ca_d$, the projection maps $x \times y \to x , x \times y \to y$ are in $R$
\end{itemize}

    Then $p$ is a Dwyer-Kan localization along $R$.
\end{proposition}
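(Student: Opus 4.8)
The plan is to reduce the statement to recognising a single comparison functor as an equivalence, and then to analyse that functor through the hypotheses on the fibres. Since $p$ carries $R$ to equivalences, the universal property of the Dwyer--Kan localisation (which exists by the remark following the definition) produces a factorisation $p = \bar p \circ \gamma$, where $\gamma \colon \Ca \to \Ca[R^{-1}]$ is the localisation and $\bar p \colon \Ca[R^{-1}] \to \D$ is the induced functor. For any auxiliary $\infty$-category $\T$ the restriction $p^{*}$ factors as $\op{Fun}(\D,\T) \xrightarrow{\bar p^{*}} \op{Fun}(\Ca[R^{-1}],\T) \xrightarrow{\gamma^{*}} \op{Fun}(\Ca,\T)$, and $\gamma^{*}$ is by definition fully faithful with image $\op{Fun}^{R}(\Ca,\T)$, the functors inverting $R$. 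Hence $p$ is a Dwyer--Kan localisation along $R$ if and only if $\bar p^{*}$ is an equivalence for every $\T$, which by the Yoneda lemma holds if and only if $\bar p$ is an equivalence of $\infty$-categories. So the whole proposition reduces to showing that $\bar p$ is an equivalence.

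Essential surjectivity of $\bar p$ is immediate: every object of $\D$ is $p(x) = \bar p(\gamma x)$ for some $x \in \Ca$, because $p$ is surjective on $0$-simplices. The role of the second hypothesis is to control the fibres. I first record that every morphism lying in a single fibre $\Ca_d$ is inverted by $\gamma$: given $f \colon x \to y$ in $\Ca_d$, form the product $x \times y$ with its projections $\pi_x, \pi_y \in R$; the map $(\op{id}_x, f) \colon x \to x \times y$ satisfies $\pi_x \circ (\op{id}_x,f) \simeq \op{id}_x$ and $\pi_y \circ (\op{id}_x,f) \simeq f$, so after applying $\gamma$ the morphism $\gamma(\op{id}_x,f)$ is a one-sided inverse of the equivalence $\gamma(\pi_x)$, hence itself an equivalence, and therefore $\gamma(f) \simeq \gamma(\pi_y)\circ \gamma(\op{id}_x,f)$ is an equivalence as well. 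Since $\Ca_d$ has a terminal object (the empty product) it is weakly contractible, and inverting all of its morphisms collapses $\gamma(\Ca_d)$ to a contractible subgroupoid of $\Ca[R^{-1}]$. This is exactly the behaviour expected of the fibre of an equivalence over $d$.

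The remaining and main point is full faithfulness of $\bar p$, i.e. that
\[ \op{Map}_{\Ca[R^{-1}]}(\gamma x, \gamma y) \longrightarrow \op{Map}_{\D}(p x, p y) \]
is an equivalence for all $x, y \in \Ca$. My plan is to compute the left-hand side by a calculus of fractions: the existence of fibre products with projection legs in $R$ supplies precisely the Ore-type completion and cancellation conditions needed to present $\op{Map}_{\Ca[R^{-1}]}(\gamma x,\gamma y)$ as a filtered colimit of mapping spaces of $\Ca$ taken over spans $x \leftarrow_{R} \bullet \to y$, equivalently to reduce an arbitrary zig-zag to a single roof. Surjectivity of $p$ on all $n$-simplices is then used to match this description with $\op{Map}_{\D}(px,py)$: every simplex witnessing a morphism or a homotopy in $\D$ lifts to $\Ca$, while the contractibility of the fibres established above guarantees that different lifts become identified in the colimit, so that no information is lost or added by passing to $\D$. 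I expect this mapping-space comparison to be the principal obstacle, as it is the one step where both hypotheses must be invoked simultaneously; the cleanest execution is likely to verify the calculus-of-fractions axioms for $R$ from the fibrewise products and then combine the resulting roof model with the simplexwise surjectivity, rather than attempting a direct simplicial manipulation.
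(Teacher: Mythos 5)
Your opening reduction is the same as the paper's: factor $p$ as $\Ca \xrightarrow{\gamma} \Ca[R^{-1}] \xrightarrow{\bar p} \D$ and show $\bar p$ is an equivalence of $\infty$-categories. Essential surjectivity from surjectivity on $0$-simplices is also fine, as is your observation that fibrewise morphisms are inverted (the retraction argument with $(\op{id}_x,f)$ is correct). The problem is that the entire content of the proposition lies in full faithfulness, and there you have only a plan, not a proof, and the plan has a real obstruction: a calculus of fractions for $R$ requires an Ore-type condition relating \emph{arbitrary} morphisms of $\Ca$ to morphisms of $R$ (given $f\colon x\to y$ and $s\colon z\to y$ in $R$, complete to a square with one leg in $R$), whereas the hypothesis only provides products of two objects lying in the \emph{same fibre} $\Ca_d$, with projections in $R$. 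Nothing in the hypotheses lets you pull an $R$-morphism back along a general morphism of $\Ca$, so the roof presentation of $\op{Map}_{\Ca[R^{-1}]}(\gamma x,\gamma y)$ is not available. Likewise, the assertion that "different lifts become identified in the colimit" is exactly the hard point --- showing that two lifts to $\Ca$ of the same simplex of $\D$ become equivalent in $\Ca[R^{-1}]$ --- and you give no mechanism for it. (A smaller issue: you infer a terminal object of $\Ca_d$ from "admits products", which is not warranted if only binary products are meant, though you only use this heuristically.)

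The paper avoids mapping spaces altogether. It shows directly that $\bar p$ is a \emph{trivial fibration} of simplicial sets: given $\sigma\colon\partial\Delta^m\to\Ca[R^{-1}]$ over $\tau\colon\Delta^m\to\D$, surjectivity of $p$ on $m$-simplices lifts $\tau$ to $\tau'\colon\Delta^m\to\Ca$ with vertices $X_i$; the fibrewise products $Z_i=X_i\times Y_i$ (where $Y_i$ are the vertices of $\sigma$) have both projections in $R$, hence give equivalences in $\Ca[R^{-1}]$ connecting the restriction of $\tau'$ to the prescribed boundary datum $\sigma$. These equivalences are then propagated across the whole simplex by an isofibration lemma (extending a map on $\Delta^1\times\partial\Delta^m\cup\{0\}\times\Delta^m$ whose edges $\Delta^1\times\{k\}$ are equivalences), applied twice. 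This is where surjectivity on \emph{all} $n$-simplices, not just objects and arrows, is genuinely used. If you want to salvage your approach, you would need to either verify a two-sided calculus of fractions (which these hypotheses do not supply) or replace the roof computation with a direct simplicial lifting argument of the above kind.
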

\begin{proof}
    As $p$ sends $R$ to equivalences, it factorizes to a map \[p':\Ca[R^{-1}] \to \D. \]
The goal is to show $p'$ is a categorical equivalence. As trivial fibrations are categorical equivalences (\cite[Proposition 2.2.12]{Land_introductionQC}),we show that $p'$ is a trivial fibration i.e for $m \ge 0$, the diagram below admits a solution:
\begin{equation*}
    \begin{tikzcd}
        \partial\Delta^m \arrow[r,"\sigma"] \arrow[d,hookrightarrow] & \Ca[R^{-1}] \arrow[d,"p'"] \\
        \Delta^m \arrow[ur,dotted,] \arrow[r,"\tau"] & \D
    \end{tikzcd}
\end{equation*}
The case $m =0$ follows as map $p'$ is surjective on objects. Thus we assume $m >0$.
We know that $\tau$ lifts to $\tau' : \Delta^n \to \Ca$ which even restricts to $ \sigma':\partial\Delta^n \to \Ca$. Let $X_0,X_1,....X_n$ be the vertices of $\sigma'$ and $Y_0,Y_1,Y_2,...Y_n$ be vertices of $\sigma$. By condition of the proposition, we have vertices $Z_i := X _i \times Y_i$ with maps $p_i: Z_i \to X_i, q_i : Z_i 
\to Y_i$ are in $R$.
We make the following two observations :
\begin{enumerate}
\item The vertices $X_i,Z_i$ when realized in $\Ca[R^{-1}]$ along with $\sigma'$ amalgamate to define a morphism 
\begin{equation}
    h_1 ; \Delta^1 \times (\coprod_{j=0}^m \Delta^0) \times_{[1] \times (\coprod_{j=0}^m \Delta^0)} \Delta^m \to \Ca[R^{-1}]
\end{equation}
where for all $0\le k \le n$, $h_1|_{\Delta^1 \times [k]}$ is an equivalence.
\item The vertices $Y_i,Z_i$ along with $\sigma$ amalgamate to define a morphism :
\begin{equation}
    h_2 : \Delta^1 \times(\coprod_{j=0}^m \Delta^0) \times_{[1] \times (\coprod_{j=0}^m\Delta^0)} \partial\Delta^m \to \Ca[R^{-1}]
\end{equation}
where for all $0\le k \le n$, $h_2|_{\Delta^1 \times [k]}$ is an equivalence

 \end{enumerate}
The proof follows from the following claim : 
\begin{claim}\label{isofibrationfunclift}
		Let $\E$ be an $\infty$-category and $n \ge 1$.  Let \[h: \Delta^1 \times \partial\Delta^n \coprod_{\{0\} \times \partial\Delta^n} \{0\} \times\Delta^n \to \E \] be a morphism such that $h|_{\Delta^1 \times [k]}: \Delta^1 \to \D $ is an equivalence for all $ 0 \le k \le n$.  Then there exists a morphism $h': \Delta^n \times \Delta^1 \to \D$ such that the diagram 
		\begin{equation*}
			\begin{tikzcd}
				\Delta^1 \times \partial\Delta^n \coprod_{\{0\} \times \partial\Delta^n} \{0\} \times\Delta^n \arrow[r,"f"] \arrow[d,hookrightarrow] & \E  \\
				\Delta^n \times \Delta^1 \arrow[ur,"h'"] & {}
			\end{tikzcd}
		\end{equation*}
		commutes.
	\end{claim}
	\begin{proof}[Proof of claim]
		The morphism $h$ gives us the following commutative diagram 
		\begin{equation*}
			\begin{tikzcd}
				\{0\} \arrow[r] \arrow[d, hookrightarrow] & \op{Fun}(\Delta^n,\E) \arrow[d,"p"] \\
				\Delta^1 \arrow[r,"g'"] & \op{Fun}(\partial\Delta^n,\E)
			\end{tikzcd}
		\end{equation*}
		where $g'$ is an equivalence. As $i: \partial \Delta^n \to \Delta^n$ is bijective on $0$ simplices applying \cite[Proposition 2.2.5]{Land_introductionQC} gives us that $p$ is an isofibration. Thus there exists a morphism $h': \Delta^1 \to \op{Fun}(\Delta^n,\E)$ which extends $h$. This completes the proof.
	\end{proof}
 \textit{Back to proving the proposition:} The idea is to apply the  \cref{isofibrationfunclift} two times as follows:
 \begin{enumerate}
     \item Applying proposiion to $h_1$ starting from all $n=11$-subsimplices to $n=m$ extends $h_1$ to the following morphism : 
\begin{equation}
    h_1': \Delta^1 \times \Delta^m \to \Ca[R^{-1}]
\end{equation}
In particular the $Z_i$'s are now vetices of an $m$-simplex $\sigma'': h_1'|_{\Delta^1 \times [0]} : \Delta^m \to \Ca[R^{-1}]$.
     \item Applying the proposition with the amalgamation of $h_2$ and $\sigma''$ extends $h_2$ to the following morphism: 
     \begin{equation}
         h_2' : \Delta^1 \times \Delta^m \to \Ca[R^{-1}]
     \end{equation}
     In particular $h_2'|_{[1] \times \Delta^m}: \Delta^m \to \Ca[R^{-1}]$ extends $\sigma$  solving the lifting problem. 
 \end{enumerate}
	This completes the proof that $p'$ is a trivial fibration and the proof of the proposition is complete. 
\end{proof}

\end{appendices}

 \bibliographystyle{abbrv}

\end{document}